\newtheorem{theorem}{Theorem}[section]
\newtheorem{lemma}[theorem]{Lemma}
\newtheorem{proposition}[theorem]{Proposition}
\theoremstyle{definition}
\newtheorem{definition}[theorem]{Definition}
\theoremstyle{remark}
\newtheorem{remark}[theorem]{Remark}
\newcommand{\ch}{\text{\rm char}}
\title[Graded identities and Specht Property for $\mathrm{UT}_3^{(-)}$]{Graded polynomial identities and Specht property for the Lie algebra of upper triangular matrices of order 3}
\author{Daniela Martinez Correa}
\address{Department of Mathematics, Instituto de Matem\'atica e Estat\'istica, Universidade de S\~ao Paulo, SP, Brazil}
\email{danielam.correa@ime.usp.br}
\thanks{The first named author is supported by Fapesp, grant no.~2024/01338-0}
\author{Felipe Yukihide Yasumura}
\address{Department of Mathematics, Instituto de Matem\'atica e Estat\'istica, Universidade de S\~ao Paulo, SP, Brazil}
\email{fyyasumura@ime.usp.br}
\begin{document}
\begin{abstract}
We compute the graded polynomial identities for the variety of graded algebras generated by the Lie algebra of upper triangular matrices of order 3 over an arbitrary field and endowed with an elementary grading. We investigate the Specht property for the same family of varieties.
\end{abstract}
\maketitle

\section{Introduction}
This paper concerns the quantitative description of graded T-ideals. It is known that an explicit description of the T-ideal of polynomial identities satisfied by finite-dimensional simple associative algebras is available only for $2 \times 2$ matrices over an infinite field (see \cite{Dr81,K02,Raz73}). For finite fields, a description is known for $k \times k$ matrices, where $1 \le k \le 4$ (see \cite{MalKuz78,Gen81,GenSid82}). On the other hand, the T-ideal of polynomial identities for the algebra of upper triangular matrices with entries from any infinite field, viewed either as an associative or a Lie algebra, is well understood, as well as a basis for its relatively free algebra when the base field is infinite (see, for example, \cite{bahturin,Drenskybook,Siderov}).

When a group grading is imposed, the description of graded polynomial identities for the associative algebra of upper triangular matrices is known (see \cite{VinKoVa2004,GRiva}). Even if the algebra is taken to be infinite-dimensional, then the graded polynomial identities is known \cite{GY}. However, the problem of describing the graded polynomial identities for the same algebra, endowed with a non-associative product, is considerably more difficult, with only partial results available (see, for instance, \cite{CMa,DimasSa,GSK,KMa,PdrManu,Y23}). A similar difficulty arises when considering the polynomial identities of the algebra with the associative product but equipped with an additional structure, such as an involution \cite{VKS,UruG}, a superinvolution \cite{IMa}, a derivation \cite{VinNar,GiamRizzo,Nar}, a Hopf action \cite{CY}, or a combination of an involution and a grading \cite{DiogoGaldino,MelloY}.

To contribute to the general understanding of the graded polynomial identities of the algebra of upper triangular matrices viewed as a Lie algebra, we will investigate the graded polynomial identities for $3 \times 3$ upper triangular matrices over an arbitrary field (\Cref{universalbaseidentidades,almostuniversalbaseidentidades,almoscanonicallbaseidentidades,baseremaininggrading}), thereby extending the results from \cite{Y23}. Additionally, we will explore the Specht property of these same algebras (\Cref{specthUniversal,specthalmostUniversal,specthalmostcanonical,remainingSpecht} and \Cref{almostcanonical_nonspecht}), continuing the theory developed in \cite{correa} and complementing the results of \cite{CD}.

\section{Preliminaries}
All the algebras and vector spaces in this paper shall be over an arbitrary field $\mathbb{K}$. We denote by $G$ an abelian group with multiplicative notation and neutral element $1$.

 \subsection{Elementary Gradings on $\mathrm{UT}_3^{(-)}$}
Let $\mathrm{UT}_n^{(-)}$ be the Lie algebras of upper triangular matrices of order $n$ over $\mathbb{K}$. The isomorphism classes of elementary grandings on $\mathrm{UT}_n^{(-)}$ are described in \cite{KY2025}. Considering such classification, we list all equivalence classes of group gradings on $\mathrm{UT}_3^{(-)}$. For this, we need a notation. The matrix
\begin{center}
 $\begin{pmatrix} g & k \\
 & h\end{pmatrix}$  
\end{center}
shall denote the elementary grading on $\mathrm{UT}_3^{(-)}$ such that  $\deg e_{12}=g$, $\deg e_{23}=h$ and $\deg e_{13}=k$. It is clear that $\deg e_{ii}=1$ for $i\in\{1,2,3\}$ and necessarily $gh=k$. The given elementary grading will be denoted by $\Gamma(g,h)$. It is known that $\Gamma(g,h)\cong\Gamma(g',h')$ if and only if $g=g'$ and $h=h'$ or $g=h'$ and $h=g'$.

Let $e_1$ and $e_2$ be generators of a free abelian group, and consider the grading $\Gamma(e_1,e_2)$ on the algebra $\mathrm{UT}_3^{(-)}$ (known as the universal elementary grading). Given any elementary $G$-grading $\Gamma(g,h)$ on $\mathrm{UT}_3^{(-)}$, then the map $\langle e_1,e_2\rangle\to G$, such that $e_1\mapsto g$ and $h\mapsto h$ induces a coarsening of $\Gamma(e_1,e_2)$ that is isomorphic to $\Gamma(g,h)$. Hence, the elementary gradings on $\mathrm{UT}_3^{(-)}$ are obtained by a coarsening of $\Gamma(e_1,e_2)$.

The equivalence classes of elementary group grading on $\mathrm{UT}_3^{(-)}$ are listed below:
\begin{center}
\begin{tabular}{|p{3cm}|p{3cm}|p{3cm}|}
\hline
\multicolumn{3}{|p{9cm}|}{\centering Universal:\\$\displaystyle\left(\begin{array}{cc}g&k\\&h\end{array}\right),\quad g\ne h,\,1\notin\{g,h,k\}$}\\\hline
\centering Canonical:\\$\displaystyle\left(\begin{array}{cc}g&k\\&g\end{array}\right),\,k\ne1$  & {\centering Almost Universal:\\ $\displaystyle\left(\begin{array}{cc}g&1\\&g^{-1}\end{array}\right)$} & {\centering Remaining:\\ $\displaystyle\left(\begin{array}{cc}g&g\\&1\end{array}\right),\,g\ne1$}\\\cline{1-2}
\multicolumn{2}{|p{6cm}|}{\centering Almost Canonical: $\displaystyle\left(\begin{array}{cc}g&1\\&g\end{array}\right),\,g\ne1$}&\\\hline
\multicolumn{3}{|p{9cm}|}{\centering Trivial: $\displaystyle\left(\begin{array}{cc}1&1\\&1\end{array}\right)$}\\\hline
\end{tabular}
\end{center}

\subsection{Partially well-ordered sets}
We state some known results about partially well-ordered sets. Recall that a partially well-ordered set is a POSET satisfying the finite basis property. For more details about the following results, we refer the reader to \cite[Section 5.2]{bahturin}.
Consider the usual order $\leq$ on the non-negative integers $\mathbb{N}_0$ and let $m\geq 1$. By \cite[  Corollary 5.11]{bahturin}, 
\[\mathbb{N}_0^m=\underbrace{\mathbb{N}_0\times\cdots\times \mathbb{N}_0}_\text{$m$ factors}\]
is a partially well-ordered set with the  order $\leq_m$ given by  
\[(n_1, \ldots, n_m)\leq _m(n_1',\ldots, n_m') \ \ \mbox{if and only if} \ \ 
n_i\leq n_i'\] 
for every $i\in \{1,\ldots,m\}$. We have the following useful result.

\begin{proposition}[{\cite[Theorem 4.2]{Higman}}]\label{Sequenceorder}
Let $D(\mathbb{N}_0^m)$ be the set of all finite sequences $(a_1, ..., a_r)$ such that $a_1, ..., a_r\in \mathbb{N}_0^m$ and $r\geq 1$. Define the following order $\preceq$ on $D(\mathbb{N}_0^m)$: $(a_1,\ldots, a_r)\preceq (b_1,\ldots, b_s)$ if and only if there exists an injection $\psi:\mathbb{N}\to\mathbb{N}$ such that:  
\begin{itemize}
    \item[(i)] $\psi$ preserves the order, that is, if $u\leq v$ then $\psi(u)\leq \psi(v)$;
    \item[(ii)] $\psi(r)\leq s$;
    \item[(iii)] $a_i\leq_m b_{\psi(i)}$ for every $i\in \{1,\ldots,r\}$.
\end{itemize}
Then $(D(\mathbb{N}_0^m), \preceq)$ is a partially well-ordered set.
\end{proposition}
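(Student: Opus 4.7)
The plan is to invoke Nash-Williams' minimal bad sequence argument. Assume for contradiction that $(D(\mathbb{N}_0^m), \preceq)$ is not pwo; then there exists an infinite \emph{bad sequence}, meaning a sequence $(s_1, s_2, \ldots)$ in $D(\mathbb{N}_0^m)$ with $s_i \not\preceq s_j$ for all $i < j$. Using the length function $|\cdot|$ together with the well-ordering of $\mathbb{N}$, construct such a sequence minimally by recursion: choose $s_1$ of smallest possible length among first terms of bad sequences, and, having fixed $s_1, \ldots, s_n$, pick $s_{n+1}$ of smallest length such that $(s_1, \ldots, s_{n+1})$ extends to a bad sequence. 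The inductive hypothesis keeps the relevant set of admissible lengths non-empty, so each choice is legitimate.

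Decompose each $s_i = (a_i, t_i)$, where $a_i \in \mathbb{N}_0^m$ is the first entry and $t_i$ is the (possibly empty) tail. Since $\mathbb{N}_0^m$ is pwo by \cite[Corollary 5.11]{bahturin}, the sequence $(a_i)$ admits an infinite non-decreasing subsequence $a_{i_1} \leq_m a_{i_2} \leq_m \cdots$. If some $s_{i_j}$ were a singleton, then $s_{i_j} \preceq s_{i_{j+1}}$ via the injection $1 \mapsto 1$, contradicting badness; hence every $t_{i_j}$ is non-empty and so lies in $D(\mathbb{N}_0^m)$.

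Now consider the hybrid sequence $(s_1, \ldots, s_{i_1 - 1}, t_{i_1}, t_{i_2}, \ldots)$. By minimality of $s_{i_1}$, no bad sequence has the prefix $(s_1, \ldots, s_{i_1 - 1}, t_{i_1})$, since $|t_{i_1}| < |s_{i_1}|$; therefore the hybrid sequence is good and contains a comparable pair. This pair cannot lie entirely in $(s_1, \ldots, s_{i_1 - 1})$, which is part of the original bad sequence. If $t_{i_j} \preceq t_{i_k}$ for some $j < k$, I would extend the witnessing injection by mapping position $1$ of $s_{i_j}$ to position $1$ of $s_{i_k}$; combined with $a_{i_j} \leq_m a_{i_k}$, this produces $s_{i_j} \preceq s_{i_k}$. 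If instead $s_u \preceq t_{i_j}$ for some $u < i_1 \leq i_j$, the shift $p \mapsto p+1$ shows $t_{i_j} \preceq s_{i_j}$, and transitivity gives $s_u \preceq s_{i_j}$. Both alternatives contradict badness of $(s_i)$, completing the argument.

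The main obstacle is the rigorous setup of the minimal bad sequence and the explicit manipulation of the witnessing injections $\psi$; once these are in place, the case analysis is routine bookkeeping. A secondary subtlety is controlling the possibility of empty tails, which is dispatched cleanly by the singleton reduction.
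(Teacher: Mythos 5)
The paper offers no proof of this proposition: it is imported verbatim as a citation of Higman's Theorem 4.2, so there is no internal argument to compare yours against. Your proposal is a correct, self-contained proof via Nash--Williams' minimal bad sequence method, which is in fact a later and slicker route than Higman's original 1952 argument. The construction of the minimal bad sequence, the singleton reduction guaranteeing non-empty tails, the goodness of the hybrid sequence $(s_1,\ldots,s_{i_1-1},t_{i_1},t_{i_2},\ldots)$ forced by minimality at position $i_1$, and the three-way case analysis on the location of the resulting good pair are all sound; the injection bookkeeping (prepending $1\mapsto1$, shifting $p\mapsto p+1$) is compatible with the paper's definition of $\preceq$, since any order-preserving injection on an initial segment extends to one on all of $\mathbb{N}$. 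Two points you lean on silently are worth flagging as the only real debts: (1) the equivalence between the finite basis property (the paper's definition of partially well-ordered) and the nonexistence of infinite bad sequences, and (2) the fact that in a pwo set every infinite sequence admits an infinite non-decreasing subsequence, which needs Ramsey's theorem for pairs and is genuinely stronger than merely having a good pair. Both are standard, but since the whole point of the proposition is foundational, a sentence acknowledging each would make the argument airtight.
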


\subsection{Model for relatively free algebra}
Let $\mathcal{A}$ be a finite-dimensional $\Omega$-algebra with basis $\{a_1,\ldots,a_n\}$, and consider the generic algebra $\mathcal{R}$, generated by the elements
$$
r_k:=\sum_{i=1}^n\xi_i^{(k)}\otimes a_i\in\mathbb{K}[\Xi]\otimes_\mathbb{K}\mathcal{A},
$$
where $\Xi=\{\xi_i^{(k)}\mid k\in\mathbb{N},i\in\{1,\ldots,n\}\}$ and $\mathbb{K}[\Xi]$ is the polynomial $\mathbb{K}$-algebra in the commutative and associative variables $\Xi$. It is known that, if the base field is infinite, then $\mathcal{R}$ is the relatively free algebra in the variety generated by $\mathcal{A}$. To include the case where the base field is finite, we proceed with the following construction. Let $\mathcal{F}=\mathrm{Func}(\mathbb{K}^\Xi,\mathbb{K})$ denote the set of all maps $\mathbb{K}^\Xi\to\mathbb{K}$. Then, the ring structure of $\mathbb{K}$ turns $\mathcal{F}$ into a ring, via pointwise sum and multiplication. We have an algebra homomorphism $\mathbb{K}[\Xi]\to\mathcal{F}$ by the following: each element of $\mathbb{K}^\Xi$ is a map $\varphi_0:\Xi\to\mathbb{K}$. Hence, it induces an algebra homomorphism $\overline{\varphi_0}:\mathbb{K}[\Xi]\to\mathbb{K}$. So, we have a map $\iota:\mathbb{K}[\Xi]\to\mathcal{F}$ such that $\iota(f)(\varphi_0)=\overline{\varphi_0}(f)$. Note that $\iota$ is injective if and only if $\mathbb{K}$ is infinite. Then, we have an algebra homomorphism $\iota\otimes1:\mathbb{K}[\Xi]\otimes_\mathbb{K}\mathcal{A}\to\mathcal{F}\otimes_\mathbb{K}\mathcal{A}$. Therefore, we have the following diagram:
\begin{center}
\begin{tikzpicture}
    \node (1) at (0,0) {$\mathcal{R}$};
    \node (11) at (4,0) {$\mathcal{R}'$};
    \node (22) at (4,2) {$\mathrm{Func}(\mathbb{K}^\Xi,\mathbb{K})\otimes_\mathbb{K}\mathcal{A}$};
    \node (2) at (0,2) {$\mathbb{K}[\Xi]\otimes_\mathbb{K}\mathcal{A}$};
    \draw[->] (1) -- node[right] {} (11);
    \draw[right hook->] (1) -- node[above] {} (2);
    \draw[->] (2) -- node[right] {} (22);
    \draw[right hook->] (11) -- node[above] {} (22);
\end{tikzpicture}
\end{center}
As in the classical case, we have:
\begin{proposition}
$\mathcal{R}'$ is the relatively free algebra of the variety generated by $\mathcal{A}$, where the base field $\mathbb{K}$ is arbitrary (finite or infinite).
\end{proposition}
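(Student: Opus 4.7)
The plan is to adapt the classical generic-element argument to the finite-field setting. I would introduce the absolutely free $\Omega$-algebra $\mathcal{L}$ on countably many generators $x_1, x_2, \ldots$, and define the surjective $\Omega$-algebra homomorphism $\phi : \mathcal{L} \to \mathcal{R}'$ by $\phi(x_k) = r_k' := (\iota\otimes 1)(r_k)$. Since $\mathcal{R}'$ is generated by the $r_k'$, the proposition reduces to showing that $\ker\phi$ equals the $T$-ideal $T(\mathcal{A})$ of identities of $\mathcal{A}$.

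For the inclusion $T(\mathcal{A}) \subseteq \ker\phi$, the key observation is that each point $\varphi_0 \in \mathbb{K}^\Xi$ yields an evaluation homomorphism $\mathrm{ev}_{\varphi_0}\otimes 1 : \mathcal{F}\otimes_\mathbb{K}\mathcal{A} \to \mathcal{A}$ that carries $r_k'$ to the concrete element $\sum_i \varphi_0(\xi_i^{(k)})\,a_i \in \mathcal{A}$. Given $f \in T(\mathcal{A})$, evaluating $f(r_1',\ldots,r_m')$ at any $\varphi_0$ therefore returns $0$; writing $f(r_1',\ldots,r_m') = \sum_i g_i\otimes a_i$ with $g_i \in \mathcal{F}$ and using that functions in $\mathcal{F}$ are determined by their pointwise values gives $g_i=0$ for all $i$, and hence $f(r_1',\ldots,r_m') = 0$ in $\mathcal{R}'$.

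For the reverse inclusion $\ker\phi \subseteq T(\mathcal{A})$, I would observe that every substitution $x_k \mapsto a^{(k)} = \sum_i \alpha_i^{(k)} a_i$ into $\mathcal{A}$ is realized by some $\varphi_0 \in \mathbb{K}^\Xi$ with $\varphi_0(\xi_i^{(k)}) = \alpha_i^{(k)}$. Hence any $f \in \ker\phi$ forces $f(a^{(1)},\ldots,a^{(m)}) = 0$ for every choice of elements in $\mathcal{A}$, so $f \in T(\mathcal{A})$.

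I do not anticipate any serious obstacle: this is a routine verification, essentially the classical generic-algebra argument for infinite fields transplanted to the new setting. The only delicate point, and precisely the reason $\mathcal{R}$ itself is not adequate when $\mathbb{K}$ is finite, is that distinct polynomials in $\mathbb{K}[\Xi]$ can induce the same function on $\mathbb{K}^\Xi$; replacing $\mathbb{K}[\Xi]$ with $\mathcal{F}$ is exactly what guarantees that an element of $\mathcal{F}\otimes_\mathbb{K}\mathcal{A}$ is determined by the collection of its evaluations at all $\varphi_0$, eliminating the spurious relations that would otherwise appear.
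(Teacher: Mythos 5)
Your argument is correct and is precisely the classical generic-algebra argument that the paper invokes without writing out (the paper states the proposition with only the remark ``as in the classical case''). Both directions are sound: the passage to $\mathcal{F}=\mathrm{Func}(\mathbb{K}^\Xi,\mathbb{K})$ guarantees that an element $\sum_i g_i\otimes a_i$ of $\mathcal{F}\otimes_\mathbb{K}\mathcal{A}$ vanishes exactly when all its pointwise evaluations do, which is the one point where the finite-field case differs from the infinite-field case, and you identify it correctly.
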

As mentioned before, if $\mathbb{K}$ is infinite, then $\mathcal{R}'\cong\mathcal{R}$ so there is nothing new in this construction.

We shall denote the image of each variable $\xi_i^{(k)}$ under $\iota$ by the same letter $\xi_i^{(k)}$, where the second is viewed as a polynomial map $\mathbb{K}^\Xi\to\mathbb{K}$. Hence, if $\mathbb{K}$ is finite containing $q$ elements, then $\left(\xi_i^{(k)}\right)^q=\xi_i^{(k)}$.

Now, assume that $\mathfrak{g}$ is a finite-dimensional Lie algebra with basis $\{a_1,\ldots,a_n\}$ and there exists a nonzero element $z\in\mathfrak{z}(\mathfrak{g})$ that can be written as $z=a_1+\sum_{i=2}^n\alpha_ia_i$. Then, a polynomial $f(x_1,\ldots,x_m)\in\mathcal{L}(X)$ which is a sum of Lie words of length at least $2$, is a polynomial identity for $\mathfrak{g}$ if and only if
$$
f(r_1-\xi_1^{(1)}z,\ldots,r_m-\xi_1^{(m)}z)=0.
$$
Hence, the generic elements
$$
r_k':=\sum_{i=2}^n\xi_i^{(k)}\otimes a_i\in\mathrm{Func}(\mathbb{K}^\Xi,\mathbb{K})\otimes\mathfrak{g}
$$
are enough to describe the polynomial identities of $\mathfrak{g}$.

Conversely, let $\psi:\mathcal{L}(X)\to\mathfrak{g}$ be a homomorphism and assume that $\psi(x_k)=\sum_{i=1}^n\alpha_i^{(k)}a_i$, for each $k\in\mathbb{N}$. Define $\psi':\mathcal{L}(X)\to\mathfrak{g}$ via
$$
\psi'(x_k)=\psi(x_k^{(g)})-a_1^{(k)}z,\quad k\in\mathbb{N}.
$$
Then, $\psi(m)=\psi'(m)$ for each commutator $m$ of length at least $2$, and $\psi'(x_k)=\sum_{i=2}^na_i^{(k)\prime}a_i$, for each $k\in\mathbb{N}$. Note that a similar construction holds valid for $G$-graded algebras.

We register the previous discussion for the particular case of upper triangular matrices.
\begin{remark}\label{evaluacionesUT}
Let $\psi:\mathcal{L}(X)\to\mathrm{UT}_n^{(-)}$ be a homomorphism. Then, there exists a homomorphism $\psi':\mathcal{L}(X)\to\mathrm{UT}_n^{(-)}$ such that $\psi(m)=\psi'(m)$, for each commutator $m$ of length at least $2$, and, for each $x\in X$, the entry $(n,n)$ of $\psi'(x)$ is $0$.
\end{remark}

\subsection{Transfer of infinite basis property} The following result is an easy remark but will be important for our purposes:
\begin{lemma}\label{transferSpecht}
Let $\mathcal{A}$ be an $\Omega$-algebra, $\Gamma$ a finite $G$-grading on $\mathcal{A}$, $\alpha:G\to H$ a group homomorphism and $\Gamma^\alpha$ the $H$-grading on $\mathcal{A}$ induced by $\alpha$. Let $\pi:\mathbb{K}\langle X^H\rangle\to\mathbb{K}\langle X^G\rangle$ be defined by
$$
\pi(x_{i,h})=\sum_{g\in\alpha^{-1}(h)\cap\mathrm{Supp}\,\Gamma}x_{i,g}.
$$
If there exists a family $\{f_i\}_{i\in\mathbb{N}}\subseteq\mathbb{K}\langle X^H\rangle$ such that
$$
\langle\pi(f_1)\rangle_{T_G}+\mathrm{Id}(\Gamma)\subsetneq\langle\pi(f_1),\pi(f_2)\rangle_{T_G}+\mathrm{Id}(\Gamma)\subsetneq\cdots,
$$
Then $\Gamma^\alpha$ does not satisfy the Specht property. More precisely,
$$
\langle f_1\rangle_{T_H}+\mathrm{Id}(\Gamma^\alpha)\subsetneq\langle f_1,f_2\rangle_{T_H}+\mathrm{Id}(\Gamma^\alpha)\subsetneq\cdots.
$$
\end{lemma}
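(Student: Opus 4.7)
The plan is to argue the contrapositive by showing that $\pi$ is compatible with the $T$-ideal structure modulo $\mathrm{Id}(\Gamma)$. Concretely I intend to establish two facts: (i) $\pi(\mathrm{Id}(\Gamma^\alpha))\subseteq\mathrm{Id}(\Gamma)$; and (ii) for every $H$-graded endomorphism $\sigma$ of $\mathbb{K}\langle X^H\rangle$ there exists a $G$-graded endomorphism $\tilde{\sigma}$ of $\mathbb{K}\langle X^G\rangle$ such that $\pi\circ\sigma\equiv\tilde{\sigma}\circ\pi\pmod{\mathrm{Id}(\Gamma)}$. Together these say that $\pi$ sends $\langle f_1,\ldots,f_n\rangle_{T_H}+\mathrm{Id}(\Gamma^\alpha)$ into $\langle\pi(f_1),\ldots,\pi(f_n)\rangle_{T_G}+\mathrm{Id}(\Gamma)$, so the strict chain below transfers to a strict chain above by contrapositive.

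Fact (i) is immediate: any $G$-graded evaluation $\phi:\mathbb{K}\langle X^G\rangle\to\mathcal{A}$ of $\Gamma$ satisfies $\phi(x_{i,g})\in\mathcal{A}_g\subseteq\mathcal{A}^\alpha_{\alpha(g)}$, so $\phi\circ\pi$ sends $x_{i,h}$ into $\bigoplus_{g\in\alpha^{-1}(h)}\mathcal{A}_g=\mathcal{A}^\alpha_h$ and is thus an $H$-graded evaluation of $\Gamma^\alpha$. Hence $f\in\mathrm{Id}(\Gamma^\alpha)$ forces $\phi(\pi(f))=0$ for every such $\phi$, giving $\pi(f)\in\mathrm{Id}(\Gamma)$.

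Fact (ii) is the heart of the argument. Given $\sigma$, expand each $\pi(\sigma(x_{j,h}))$ into its $G$-homogeneous components. Since $\sigma(x_{j,h})$ is $H$-homogeneous of degree $h$ and $\pi$ replaces every $H$-homogeneous variable by a sum of $G$-homogeneous variables supported on $\mathrm{Supp}\,\Gamma$, the homomorphism property of $\alpha$ forces each such component to sit in a $G$-degree $g'\in\alpha^{-1}(h)$. Define $\tilde{\sigma}(x_{j,g})$ for $g\in\mathrm{Supp}\,\Gamma$ to be the $G$-homogeneous component of $\pi(\sigma(x_{j,\alpha(g)}))$ of degree $g$, and $\tilde{\sigma}(x_{j,g})=0$ otherwise. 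The discarded components have $G$-degree $g'\in\alpha^{-1}(h)\setminus\mathrm{Supp}\,\Gamma$; under any $G$-graded evaluation they land in $\mathcal{A}_{g'}=0$, so they belong to $\mathrm{Id}(\Gamma)$. Therefore
\[
\pi(\sigma(x_{j,h}))\equiv\sum_{g\in\alpha^{-1}(h)\cap\mathrm{Supp}\,\Gamma}\tilde{\sigma}(x_{j,g})=\tilde{\sigma}(\pi(x_{j,h}))\pmod{\mathrm{Id}(\Gamma)},
\]
and since $\pi\circ\sigma$ and $\tilde{\sigma}\circ\pi$ are $\Omega$-algebra homomorphisms agreeing modulo $\mathrm{Id}(\Gamma)$ on the generators of $\mathbb{K}\langle X^H\rangle$, the congruence extends to the whole free algebra.

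The main obstacle is precisely this construction of $\tilde{\sigma}$: one would naively hope for an on-the-nose equality $\pi\circ\sigma=\tilde{\sigma}\circ\pi$, but the $G$-homogeneous decomposition of $\pi(\sigma(x_{j,h}))$ may acquire summands in degrees outside $\mathrm{Supp}\,\Gamma$, which cannot be realised as values of variables under any $G$-graded endomorphism. The key observation rescuing the argument is that such summands automatically lie in $\mathrm{Id}(\Gamma)$, so the congruence is meaningful precisely at the level at which the lemma is stated. Granted (i) and (ii), if $f_{n+1}\in\langle f_1,\ldots,f_n\rangle_{T_H}+\mathrm{Id}(\Gamma^\alpha)$ then $\pi(f_{n+1})\in\langle\pi(f_1),\ldots,\pi(f_n)\rangle_{T_G}+\mathrm{Id}(\Gamma)$, contradicting the hypothesis; this yields the asserted strict chain above.
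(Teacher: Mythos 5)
Your proof is correct and takes essentially the same route as the paper, whose entire proof consists of asserting the two inclusions $\pi(\langle f\rangle_{T_H})\subseteq\langle\pi(f)\rangle_{T_G}$ and $\pi(\mathrm{Id}(\Gamma^\alpha))\subseteq\mathrm{Id}(\Gamma)$. You supply the details the paper omits, and your observation that the first inclusion should really be taken modulo $\mathrm{Id}(\Gamma)$ --- because $\pi(\sigma(x_{j,h}))$ may acquire homogeneous components in degrees outside $\mathrm{Supp}\,\Gamma$, which lie in $\mathrm{Id}(\Gamma)$ but not obviously in $\langle\pi(f)\rangle_{T_G}$ --- is a genuine refinement that makes the argument airtight.
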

\begin{proof}
It follows from the fact that $\pi(\langle f\rangle_{T_H})\subseteq\langle\pi(f)\rangle_{T_G}$ and $\pi(\mathrm{Id}(\Gamma^\alpha))\subseteq\mathrm{Id}(\Gamma)$.
\end{proof}

\subsection{Dual in a relatively free algebra} We shall describe the inverse process of the classical method of determining a basis of a relatively free algebra.
\begin{lemma}\label{inverseprocess}
Let $\mathbb{K}$ be an arbitrary field (finite or infinite), $\mathcal{A}$ an $\Omega$-algebra and $S=\{g_1,\ldots,g_m\}\subseteq\mathbb{K}_\Omega\langle X\rangle$ be a finite $\mathbb{K}$-linearly independent set, modulo $\mathrm{Id}(\mathcal{A})$. Then, there exists a finite family of evaluations $\{\psi_{j}:\mathbb{K}_\Omega\langle X\rangle\to\mathcal{A}\}_{j\in\Lambda}$ and, for each $\ell\in\{1,\ldots,m\}$, a finite family of linear maps $\{f_{j\ell}:\mathcal{A}\to\mathbb{K}\}_{j\in\Lambda}$ such that
$$
\sum_{j\in\Lambda}f_{j\ell}\circ\psi_j(g_i)=\delta_{i\ell},\quad\forall i\in\Lambda.
$$
\end{lemma}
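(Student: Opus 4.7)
My plan is to pass to the explicit model $\mathcal{R}'\hookrightarrow\mathcal{F}\otimes_\mathbb{K}\mathcal{A}$ of the relatively free algebra built above, where $\mathcal{F}=\mathrm{Func}(\mathbb{K}^\Xi,\mathbb{K})$. Fixing a basis $\{a_1,\ldots,a_n\}$ of $\mathcal{A}$, I would write the image $\tilde{g}_i$ of $g_i$ in $\mathcal{R}'$ as $\tilde{g}_i=\sum_{k=1}^n\phi_{ik}\otimes a_k$ with $\phi_{ik}\in\mathcal{F}$. Since $S$ is linearly independent modulo $\mathrm{Id}(\mathcal{A})$, the elements $\tilde{g}_1,\ldots,\tilde{g}_m$ are $\mathbb{K}$-linearly independent in $\mathcal{R}'$. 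The whole argument then takes place inside the \emph{finite-dimensional} space $W\otimes\mathcal{A}$, where $W\subseteq\mathcal{F}$ is the subspace spanned by all $\phi_{ik}$.

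The next step is to show that finitely many point evaluations $\mathrm{ev}_\varphi|_W$, with $\varphi\in\mathbb{K}^\Xi$, already span $W^*$. The key observation is that since $W\subseteq\mathrm{Func}(\mathbb{K}^\Xi,\mathbb{K})$, no nonzero element of $W$ is killed by every $\mathrm{ev}_\varphi$, so $\{\mathrm{ev}_\varphi|_W\}_{\varphi\in\mathbb{K}^\Xi}$ separates $W$; finite-dimensionality of $W^*$ then forces some finite subfamily $\{\mathrm{ev}_{\varphi_j}\}_{j\in\Lambda}$ to span $W^*$. Tensoring with $\mathcal{A}^*$, the functionals of the form $\mathrm{ev}_{\varphi_j}\otimes f$ with $j\in\Lambda$ and $f\in\mathcal{A}^*$ span $(W\otimes\mathcal{A})^*$.

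With $\tilde{g}_1,\ldots,\tilde{g}_m$ linearly independent in the finite-dimensional space $W\otimes\mathcal{A}$, standard linear algebra yields dual functionals $\ell_\ell\in(W\otimes\mathcal{A})^*$ with $\ell_\ell(\tilde{g}_i)=\delta_{i\ell}$. Using the spanning property from the previous paragraph (and enlarging $\Lambda$ if necessary so that a single index set works for all $\ell$), I would express each $\ell_\ell$ as $\ell_\ell=\sum_{j\in\Lambda}\mathrm{ev}_{\varphi_j}\otimes f_{j\ell}$ for suitable $f_{j\ell}\in\mathcal{A}^*$. Each $\varphi_j\in\mathbb{K}^\Xi$ induces an algebra homomorphism $\psi_j\colon\mathbb{K}_\Omega\langle X\rangle\to\mathcal{A}$ by $x_k\mapsto\sum_i\varphi_j(\xi_i^{(k)})a_i$, which by the very construction of $\mathcal{R}'$ satisfies $\psi_j(g_i)=(\mathrm{ev}_{\varphi_j}\otimes 1)(\tilde{g}_i)$. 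Combining these ingredients,
$$
\sum_{j\in\Lambda}f_{j\ell}\bigl(\psi_j(g_i)\bigr)=\sum_{j\in\Lambda}(\mathrm{ev}_{\varphi_j}\otimes f_{j\ell})(\tilde{g}_i)=\ell_\ell(\tilde{g}_i)=\delta_{i\ell},
$$
as required.

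The only step I expect to require any actual argument is the separation claim used to get a finite spanning family of evaluations on $W$; but this is a short linear-algebra observation once one notes that the natural map $W\to\mathbb{K}^{\mathbb{K}^\Xi}$ is injective and $W$ is finite-dimensional, so finitely many coordinate projections already separate $W$. Everything else is bookkeeping inside the model $\mathcal{R}'$ set up earlier in the section, and works uniformly for $\mathbb{K}$ finite or infinite.
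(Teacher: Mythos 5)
Your argument is correct, and it reaches the conclusion by a more concrete route than the paper's. The paper never passes through the generic model: it embeds $\mathbb{K}_\Omega\langle X\rangle/\mathrm{Id}(\mathcal{A})$ into the full product $\prod_{H}\mathcal{A}$ over \emph{all} homomorphisms $H=\mathrm{Hom}(\mathbb{K}_\Omega\langle X\rangle,\mathcal{A})$, argues that linear independence of the finite set $S$ must already be witnessed in the projection onto some finite subproduct $\prod_\Lambda\mathcal{A}$ (an inverse-limit/stabilization step), takes dual functionals there, and notes that any functional on a finite product of copies of $\mathcal{A}$ decomposes as $\sum_{j\in\Lambda}f_{j\ell}\circ\psi_j$. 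You instead realize the relatively free algebra as $\mathcal{R}'\subseteq\mathrm{Func}(\mathbb{K}^\Xi,\mathbb{K})\otimes_\mathbb{K}\mathcal{A}$, confine the whole problem to the finite-dimensional space $W\otimes\mathcal{A}$, and replace the stabilization step by the observation that point evaluations separate the function space $W$, so finitely many of them already span $W^*$. The two finiteness mechanisms are interchangeable, and your version has the merit of being plain linear algebra in a finite-dimensional space; the identity $\psi_j(g_i)=(\mathrm{ev}_{\varphi_j}\otimes 1)(\tilde g_i)$ that ties evaluations of $g_i$ to coordinates of $\tilde g_i$ is exactly right. The one caveat is that the model $\mathcal{R}'$ is set up in the paper only for \emph{finite-dimensional} $\mathcal{A}$ (the generic elements are finite sums over a fixed finite basis), whereas the lemma is stated for an arbitrary $\Omega$-algebra and the paper's product argument needs no such hypothesis. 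Since the lemma is applied only to the three-dimensional algebra $\mathrm{span}_\mathbb{K}\{e_{22},e_{33},e_{23}\}$, this restriction is harmless here, but to prove the statement in its stated generality you would either add the finite-dimensionality hypothesis or fall back on the embedding into $\prod_H\mathcal{A}$.
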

\begin{proof}
The set $\{g_1+\mathrm{Id}(\mathcal{A}),\ldots,g_m+\mathrm{Id}(\mathcal{A})\}$ is $\mathbb{K}$-linearly independent in $\mathbb{K}_\Omega\langle X\rangle/\mathrm{Id}(\mathcal{A})$. The latter is a subalgebra of the direct product of copies of $\mathcal{A}$ via the map $\mu:\mathbb{K}_\Omega\langle X\rangle/\mathrm{Id}(\mathcal{A})\to\prod_{H}\mathcal{A}$, defined by $h\mapsto\left(\psi(h)\right)_{\psi\in H}$,
where $H=\mathrm{Hom}(\mathbb{K}_\Omega\langle X\rangle,\mathcal{A})$. The space $\prod_H\mathcal{A}$ is the inverse limit of finite direct products of copies of $\mathcal{A}$, indexed by finite subsets of $H$. Since $S$ is finite, there exists a finite subset $\Lambda\subseteq H$ where the image of $S$ is $\mathbb{K}$-linearly independent in $\prod_{\Lambda}\mathcal{A}$. Denote $p:\prod_{H}\mathcal{A}\to\prod_{\Lambda}\mathcal{A}$ the projection. So, considering a dual set of $\{p\circ\mu(g_i+\mathrm{Id}(\mathcal{A}))\mid i\in\{1,\ldots,m\}\}$, we can find a linear functional $\varphi_\ell:\prod_{\Lambda}\mathcal{A}\to\mathbb{K}$ such that $\varphi_\ell(p\circ\mu(g_i+\mathrm{Id}(\mathcal{A})))=\delta_{i\ell}$, for each $\ell\in\{1,\ldots,m\}$. However, every linear map $\varphi_\ell$ is expressed as a sum $\sum_{j\in\Lambda}f_{j\ell}\circ\psi_j$, where each $f_{j\ell}:\mathcal{A}\to\mathbb{K}$ is linear.
\end{proof}

\section{Graded Identities and Specht Property for $\mathrm{UT}_3^{(-)}$}
We will compute the graded polynomial identities for each of the elementary group gradings introduced earlier, over an arbitrary field. Additionally, we will investigate the respective Specht property for each of these gradings when the base field is infinite. It is worth noting that these results are already known for the Trivial grading \cite{bahturin,Siderov} and the Canonical grading \cite{CGR, correa}.

\noindent\textbf{Notation.} We use the following notation, for $m\in\mathbb{N}$ and $x$, $y$ elements of a Lie algebra:
 $$
 [x,y^{(m)}]:=[x,\underbrace{y,\ldots,y}_{\text{$m$ times}}].
 $$
Let $G$ be an abelian group. We consider the free $G$-graded Lie algebra $\mathcal{L}(X^G)$, where $X^G=\{x_{i,g}\mid i\in\mathbb{N},g\in G\}$. The variables of trivial degree will be denoted by $y$, $y_1$, $y_2$, \dots, and $z$, $z_1$, $z_2$, \dots will be reserved for variables of non-trivial degree.

\subsection{Universal Grading}\label{universalgradingsection}
For this section, we let $\mathbb{K}$ be an arbitrary field and $\mathrm{UT}_3^{(-)}$ be endowed with the Universal grading $\Gamma(e_1,e_2)$, where $\langle e_1,e_2\rangle\cong\mathbb{Z}^2$. 

It is clear that the polynomials $[y_1,y_2]$ and $x_g$, where $g\notin\mathrm{Supp}\,\Gamma(e_1,e_2)$, are graded polynomial identities for $(\mathrm{UT}_3^{(-)},\Gamma(e_1,e_2))$. In addition, if $\mathbb{K}$ is finite with $q$ elements, then $[x_g,y^{(q)}]-[x_g,y]$, for each $g\in\{e_1,e_2,e_1e_2\}$, is a polynomial identity as well. The characterization of $\mathrm{Id}_G(\mathrm{UT}_3^{(-)},\Gamma(e_1,e_2))$ is as follows.
 
\begin{theorem}\label{universalbaseidentidades}
\renewcommand{\labelenumi}{(\roman{enumi})}
 Let $\mathbb{K}$ be an arbitrary field and $G=\langle e_1,e_2\rangle\cong\mathbb{Z}^2$. Then, the ideal $\mathrm{Id}_G(\mathrm{UT}_3^{(-)}(\mathbb{K}),\Gamma(e_1,e_2))$ is generated as $T_G$-ideal by:
 \begin{enumerate}
 \item $x_g$, $g\notin\mathrm{Supp}\,\Gamma(e_1,e_2)$,
  \item $[y_1,y_2]$.
 \end{enumerate}
 In addition, if $\mathbb{K}$ is a finite field containing $q$ elements, then we include
 \begin{enumerate}
 \setcounter{enumi}{2}
\item $[x_g,y^{(q)}]-[x_g,y]$, $g\in\{e_1,e_2,e_1e_2\}$.
 \end{enumerate}
 A basis of the relatively free algebra $\mathcal{L}(X^G)$, 
 modulo $\mathrm{Id}_G(\mathrm{UT}_3^{(-)},\Gamma(e_1,e_2))$, constitute of all the following polynomials:
   \begin{align}
    &[x_g,y_1^{(p_1)},\ldots,y_r^{(p_r)}],\label{universal.1}\\
    &[x_{e_1},y_1^{(p_1)},\ldots,y_r^{(p_r)},x_{e_2},y_1^{(q_1)},\ldots,y_r^{(q_r)}]\label{universal.2},
   \end{align}
   where $g\in \{e_1,e_2, e_1e_2\}$, $r\ge0$ and $0\le p_\ell,q_\ell<|\mathbb{K}|$, for each $\ell\in\{1,2,\ldots,r\}$.
 \end{theorem}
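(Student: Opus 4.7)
I plan to prove the theorem in three stages. First, a direct verification shows that the polynomials (i)--(iii) are graded identities for $(\mathrm{UT}_3^{(-)}, \Gamma(e_1, e_2))$: (i) holds because $\mathrm{Supp}\,\Gamma(e_1, e_2) = \{1, e_1, e_2, e_1 e_2\}$; (ii) holds because elements of trivial degree are diagonal and thus commute; (iii) follows from $[a e_{ij}, \alpha_i e_{ii} + \alpha_j e_{jj}] = a(\alpha_j - \alpha_i) e_{ij}$ together with Fermat's little theorem, $(\alpha_j - \alpha_i)^q = \alpha_j - \alpha_i$ in $\mathbb{F}_q$. Writing $I$ for the $T_G$-ideal generated by (i)--(iii), this yields $I \subseteq \mathrm{Id}_G(\mathrm{UT}_3^{(-)}, \Gamma(e_1, e_2))$.

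Second, I show that the polynomials \eqref{universal.1}--\eqref{universal.2} span $\mathcal{L}(X^G)$ modulo $I$. Every Lie word rewrites, via Jacobi and anticommutativity, as a linear combination of left-normed commutators $[v_0, v_1, \ldots, v_n]$. If $v_0, v_1$ are both of trivial degree then (ii) forces $[v_0, v_1] = 0$, so by anticommutativity we may assume $v_0$ has non-trivial degree. Jacobi combined with (ii) implies that adjacent $y$-variables in a left-normed commutator commute, so the $y$'s may be placed in canonical order and grouped into blocks $y_i^{(p_i)}$. By degree considerations each word then reduces either to the form \eqref{universal.1}, or to a commutator of shape $[x_{e_1}, A, x_{e_2}, B]$ or $[x_{e_2}, A, x_{e_1}, B]$ where $A, B$ are canonically ordered sequences of $y$'s. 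The first shape is already in the form \eqref{universal.2}; the second is converted to the first via the Jacobi identity
\[
[x_{e_2}, y, x_{e_1}, B] = [x_{e_1}, y, x_{e_2}, B] - [x_{e_1}, x_{e_2}, y, B],
\]
applied inductively on $|A|$, with $y$-commutativity from (ii) consolidating the correction terms produced at each step. When $\mathbb{K} = \mathbb{F}_q$, identity (iii) finally reduces each exponent to $0 \le p_i, q_i < q$.

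Third, for linear independence modulo $\mathrm{Id}_G$, I use generic evaluations: following Remark \ref{evaluacionesUT}, set $y_i \mapsto \alpha_i e_{11} + \beta_i e_{22}$, $x_{e_1} \mapsto a\,e_{12}$, $x_{e_2} \mapsto b\,e_{23}$, $x_{e_1 e_2} \mapsto c\,e_{13}$. Putting $\lambda_i := \beta_i - \alpha_i$ and $\mu_i := -\beta_i$, the relations $[e_{12}, y_i] = \lambda_i e_{12}$, $[e_{23}, y_i] = \mu_i e_{23}$, $[e_{13}, y_i] = (\lambda_i + \mu_i) e_{13}$, $[e_{12}, e_{23}] = e_{13}$ yield
\[
a\textstyle\prod_i\lambda_i^{p_i}\,e_{12},\quad b\textstyle\prod_i\mu_i^{p_i}\,e_{23},\quad c\textstyle\prod_i(\lambda_i+\mu_i)^{p_i}\,e_{13},\quad ab\textstyle\prod_i\lambda_i^{p_i}(\lambda_i+\mu_i)^{q_i}\,e_{13}
\]
as the respective images. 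The substitution $\nu_i := \lambda_i + \mu_i$ turns the type-2 family into distinct monomials $ab\prod_i \lambda_i^{p_i} \nu_i^{q_i}$; the distinct scalar factors $a, b, c, ab$ together with the distinct matrix units $e_{12}, e_{23}, e_{13}$ separate the four families; and monomials of multi-degree strictly less than $|\mathbb{K}|$ in each variable are linearly independent as functions on $\mathbb{F}_q^r$. Linear independence of the full family follows; combined with the spanning step, this forces $I = \mathrm{Id}_G$ and simultaneously establishes the basis claim.

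The main obstacle lies in the inductive reduction of $[x_{e_2}, A, x_{e_1}, B]$ in the spanning step: each Jacobi expansion produces terms involving inner subcommutators of the form $[y_i, x_{e_1}]$, and delicate use of $y$-commutativity (from (ii)) is required to show that all corrections land within the form \eqref{universal.2}, so that no residual shapes outside the claimed basis are generated.
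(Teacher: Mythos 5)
Your proposal is correct and follows essentially the same route as the paper's proof: the same Jacobi-plus-identity reduction to the normal forms \eqref{universal.1}--\eqref{universal.2} (including the iterative Jacobi rewriting of $[x_{e_2},A,x_{e_1},B]$), and the same style of generic diagonal evaluation with exponents bounded by $|\mathbb{K}|$ for linear independence. The only differences are cosmetic (the paper evaluates $y_k\mapsto\xi_{2,k}e_{22}+\xi_{3,k}e_{33}$ and separates the families by a without-loss-of-generality reduction on the graded variables occurring, whereas you use $e_{11},e_{22}$ and auxiliary scalars $a,b,c$), and the Jacobi bookkeeping you flag as the main obstacle is handled at the same level of detail in the paper.
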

 \begin{proof}
Let $T$ be the $T_G$-ideal generated by the polynomials (i)-(iii) of the statement. First, we shall show that given $m\in\mathcal{L}(X^G)$, we can write $m$ as a linear combination of polynomial of type \eqref{universal.1} and \eqref{universal.2}, modulo $T$. For this, it is sufficient to prove the statement when
\[m=[x_{i_1,g_1},\ldots, x_{i_r,g_r}]\]
Notice that there are at most two variable of non-trival degree in $m$, modulo $T$. It follows that 
\[m=[y_{i_1},\ldots, y_{i_s}, x_{i, g}, y_{j_1},\ldots, y_{j_t}, x_{j, h}, y_{l_1},\ldots, y_{l_r}],\]
for some $g, h\in\{e_1,e_2,e_1e_2\}$ and the block formed by the variables $x_{j,h}, y_{l_1},\ldots, y_{l_r}$ can be empty. Since $[y_1,y_2]\in T$, we have that $s\leq 1$ and the variables $y$'s can be ordered in non-decreasing way. Hence, modulo $T$, one has
\[m=[x_{i, g}, y_{1}^{(p_1)},\ldots, y_{r}^{(p_r)}, x_{j, h}, y_{1}^{(q_1)},\ldots, y_{r}^{(q_r)}], \]
where $r\geq 0$, $p_l, q_l\geq 0$  for all $l\in\{1,\ldots, r\}$, $g,h\in\{e_1,e_2, e_1e_2\}$ and the second non-trivial variable  $x_{j,h}$ may not appear. Moreover, if $\mathbb{K}$ is finite having $q$ elements, then the identity  $[x_g,y^{(q)}]-[x_g,y]\in T$ tells us that $0\leq p_l, q_l< q$,  for all $l\in\{1,\ldots, r\}$. It follows that, modulo $T$,
\[m=[x_{i, g}, y_{1}^{(p_1)},\ldots, y_{r}^{(p_r)}, x_{j, h}, y_{1}^{(q_1)},\ldots, y_{r}^{(q_r)}], \]
where $r\geq 1$, $0\leq p_l, q_l< |\mathbb{K}|$  for all $l\in\{1,\ldots, r\}$ and the second non-trivial variable  $x_{j,h}$ may not appear.

If the variable $x_{j,h}$ does not appear in the polynomial $m$, we have that $m$ is a polynomial of type (1).

If the second variable appears in $m$, we have $g,h\in\{e_1, e_2\}$ with $g\neq h$. If $g=e_1$ and $h=e_2$, the polynomial $m$ is a polynomial of type (2).  If $g=e_2$ and $h=e_1$, using the Jacoby Identity on $m$ iteratively, we can write $m$ as a linear combination of polynomials of type (2). 
Now, we shall prove that the set  $M$ of polynomials of type (1) and type (2) is linearly indepedent, modulo $\mathrm{Id}_G(\mathrm{UT}_3^{(-)},\Gamma(e_1,e_2))$. In fact, suppose that 
\[f=\sum\limits_{h\in M}\alpha_h h \in \mathrm{Id}_G (\mathrm{UT}_3^{(-)},\Gamma(e_1,e_2)),\]
where $\alpha_h\in\mathbb{K}$. Without loss of generality, we can suppose either that 
\[f=\sum\alpha [x_g, y_{1}^{(p_1)},\ldots, y_{r}^{(p_r)}],\]
where $g\in \{e_1,e_2, e_1e_2\}$ and the sum runs over all possibilities of choices of $p_\ell$, where $0\leq p_{\ell}<|\mathbb{K}|,\,\, 1\leq l\leq r$; or 
\[f=\sum\limits_{s=1}^t \alpha_{s}[x_{e_1},y_1^{(p_{1,s})},\ldots,y_r^{(p_{r,s})},x_{e_2},y_1^{(q_{1,s})},\ldots,y_r^{(q_{r,s})}],\]
where $r\ge0$ and $0\le p_{\ell,s},q_{\ell,s}<|\mathbb{K}|$.

First, suppose 
$f=\sum\limits_{s=1}^t \alpha_{s}[x_{e_1},y_1^{(p_{1,s})},\ldots,y_r^{(p_{r,s})},x_{e_2},y_1^{(q_{1,s})},\ldots,y_r^{(q_{r,s})}]$ and consider the polynomial algebra $\mathbb{K}[\xi_{i,j}\mid i\in\{1,2,3\},\,  j\in\mathbb{N}]$. Evaluating as
\[x_{e_1}\mapsto e_{12},\quad x_{e_2}\mapsto e_{23},\quad y_k\mapsto\sum\limits_{l=2}^3 \xi_{l,k} e_{ll},\]
 in the polynomial $f$, we obtain 
 \[f\mapsto\left(\sum\limits_{s=1}^t \alpha_{s}\left(\prod\limits_{l=1}^r (\xi_{2,l})^{p_{l,s}}(\xi_{3,l})^{q_{l,s}}\right)\right) e_{13}.\]
Since $0\leq p_{l,s}, q_{l,s}<|\mathbb{K}|$, we conclude that $\alpha_s=0$ for $1\leq s\leq t$. 

If $f=\sum\alpha [x_g, y_{1}^{(p_1)},\ldots, y_{r}^{(p_r)}]\in \mathrm{Id}_G (\mathrm{UT}_3^{(-)},\Gamma(e_1,e_2)))$, then a similar evaluation gives $\alpha=0$, for each $\alpha$.
 \end{proof}
 
Next, we investigate the Specht property for the variety of graded Lie algebras generated by $\mathrm{UT}_3^{(-)}$ endowed with the Universal grading. First, we shall establish some notation.

For each commutator $f=[x_g, y_{1}^{(k_1)},\ldots, y_{n}^{(k_n)}]$, where $g\in\{e_1, e_2, e_1 e_2\}$, we associate the following finite sequence of non-negative integers
\begin{equation}
S_f= (k_1,\ldots, k_n).
\end{equation}
\begin{lemma}\label{oderSfSg}
 Let $(D(\mathbb{N}_0), \preceq)$ be the partially well-ordered set from 
\Cref{Sequenceorder}.  Consider the following commutators
\[f= [x_g, y_{1}^{(k_1)},\ldots, y_{n}^{(k_n)}],\]
\[h= [x_g, y_{1}^{(k'_1)},\ldots, y_{m}^{(k'_m)}],\]
where $g\in\{e_1,e_2, e_{1}e_2\}$ and $0\leq k_1,\ldots, k_n,k'_1,\ldots, k'_m<|\mathbb{K}|$. If $S_f\preceq S_h$, then $h\in \langle f\rangle^{T_G}+ \mathrm{Id}_G(\mathrm{UT}_3^{(-)},\Gamma(e_1,e_2))$.
\end{lemma}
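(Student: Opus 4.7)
The plan is to interpret the abstract witness of $S_f \preceq S_h$ as an explicit recipe for manipulating $f$ inside the $T_G$-ideal. Let $\psi:\mathbb{N}\to\mathbb{N}$ be an injection as in \Cref{Sequenceorder}, so that $\psi$ is strictly increasing, $\psi(n)\le m$, and $k_i \le k'_{\psi(i)}$ for every $i\in\{1,\ldots,n\}$.

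First, I would apply the $T_G$-endomorphism $y_i \mapsto y_{\psi(i)}$, $x_g \mapsto x_g$, to $f$. This produces
$$
f' \;=\; [x_g,\, y_{\psi(1)}^{(k_1)},\, y_{\psi(2)}^{(k_2)},\, \ldots,\, y_{\psi(n)}^{(k_n)}] \;\in\; \langle f\rangle^{T_G},
$$
whose trivial-degree variables are indexed by the distinct elements $\psi(1)<\cdots<\psi(n)\le m$ of $\{1,\ldots,m\}$. Next, since a $T_G$-ideal of a Lie algebra is in particular a Lie ideal, I would enlarge $f'$ by left-normed brackets with further $y$-variables, adding $l_j := k'_j - k_i$ copies of $y_j$ when $j=\psi(i)$ (a non-negative quantity by hypothesis) and $l_j := k'_j$ copies when $j$ lies outside the image of $\psi$. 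The result
$$
f'' \;=\; [f',\, y_1^{(l_1)},\, y_2^{(l_2)},\, \ldots,\, y_m^{(l_m)}] \;\in\; \langle f\rangle^{T_G}
$$
contains, as a multiset, exactly $k'_j$ copies of $y_j$ for each $j\in\{1,\ldots,m\}$, matching the multiset of trivial-degree variables appearing in $h$.

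Finally, I would invoke the identity $[y_1,y_2]\in\mathrm{Id}_G(\mathrm{UT}_3^{(-)},\Gamma(e_1,e_2))$ to permute the $y$-variables of $f''$ into the natural order $y_1,y_2,\ldots,y_m$, thereby identifying $f''$ with $h$ modulo the identities. The rearrangement rests on the Jacobi consequence
$$
[[u, y_i], y_j] - [[u, y_j], y_i] \;=\; [u, [y_i, y_j]] \;\in\; \mathrm{Id}_G(\mathrm{UT}_3^{(-)},\Gamma(e_1,e_2)),
$$
which allows any two adjacent $y$'s in a left-normed commutator to be swapped modulo $\mathrm{Id}_G$; iteration gives arbitrary permutation.

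I expect only the final reordering to demand any care, and even there the work is purely combinatorial bookkeeping rather than a conceptual obstacle; the two substantive ingredients are the order-preserving injection $\psi$ (supplied by the hypothesis) and the fact that the nontrivial-degree variable $x_g$ appears only once in each commutator, so that no interaction between $x_g$-copies must be accounted for. Combining the three steps yields $h\equiv f'' \pmod{\mathrm{Id}_G(\mathrm{UT}_3^{(-)},\Gamma(e_1,e_2))}$ and hence $h\in\langle f\rangle^{T_G}+\mathrm{Id}_G(\mathrm{UT}_3^{(-)},\Gamma(e_1,e_2))$, as required.
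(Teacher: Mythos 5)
Your proposal is correct and follows essentially the same route as the paper: both arguments use the order-preserving injection to rename the $y$-variables, introduce the missing copies of each $y_j$ inside the $T_G$-ideal, and then reorder using the fact that $[y_1,y_2]$ (hence $[u,[y_i,y_j]]$) lies in $\mathrm{Id}_G(\mathrm{UT}_3^{(-)},\Gamma(e_1,e_2))$. The only cosmetic difference is that the paper inserts the extra $y$'s by substituting $x_g\mapsto[x_g,y_{i_1}^{(k'_{i_1}-k_1)},\ldots]$, whereas you append them on the right via the ideal property; both variants reduce to the same reordering step.
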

\begin{proof}
Since $S_f\preceq   S_h$,  there exists a subsequence $(k'_{i_1}, \ldots, k'_{i_n})$ of $S_h$ such that $ k_j \leq k'_{i_j}$ for every $ j \in \{1, \ldots,n\}$. Then
    \[ \tilde{f}=f(x_{g}, y_{i_1},\ldots, y_{i_n})=[x_{g},y_{i_1}^{(k_1)}, \ldots, y_{i_n}^{(k_n)}] \in \langle f\rangle^{T_G}. \]
Substituting as $x_g \mapsto [x_g,y_{i_1}^{(k'_{i_1}-k_1)},\ldots, y_{i_n}^{(k'_{i_n}-k_n)},y_{j_1}^{(k'_{j_1})}, \ldots, y_{j_{m-n}}^{(k'_{j_{m-n}})} ]$ on $\tilde{f}$,
where $\{j_{1}, \ldots, j_{m-n}\}=\{1,\ldots,m\}\setminus \{i_{1}, \ldots, i_{n}\}$, we obtain a new polynomial $\overline{f}\in \left< f \right>^{T_G}$. By \Cref{universalbaseidentidades},
\begin{align*} [\overline{f},y_{i_{1}}^{(l'_{i_{1} }-l_1)},\ldots,y_{i_n}^{(l'_{i_n}- l_n)},y_{j_1}^{(l'_{j_1})}, \ldots, y_{j_{m-n}}^{(l'_{j_{m-n}})}]+ \mathrm{Id}_G(\mathrm{UT}_3^{(-)},\Gamma(e_1,e_2))=\\h+\mathrm{Id}_G(\mathrm{UT}_3^{(-)},\Gamma(e_1,e_2)).
\end{align*}
Therefore, $h\in \langle f\rangle^{T_G}+ \mathrm{Id}_G(\mathrm{UT}_3^{(-)},\Gamma(e_1,e_2))$.
\end{proof}

\begin{lemma}\label{finitecomutatoruniversal}
Let $g\in\{e_1, e_2, e_1 e_2\}$ and $C$ be a subset of the set 
\[\left\{[x_g, y_1^{(k_1)},\ldots, y_n^{(k_n)}]\mid n\geq 0,\, k_1,\ldots, k_n\geq 0\right\}.\]  
Then, there exist $f_1,\ldots, f_s\in C$ such that
\[ \langle f_1,\ldots, f_s\rangle ^{T_G}+ \mathrm{Id}_G(\mathrm{UT}_3^{(-)},\Gamma(e_1,e_2))= \langle C\rangle^{T_G}+\mathrm{Id}_G(\mathrm{UT}_3^{(-)},\Gamma(e_1,e_2)).\]
\end{lemma}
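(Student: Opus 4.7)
The plan is a direct application of Higman's theorem (Proposition~\ref{Sequenceorder}) together with the ``comparison'' result already proved in Lemma~\ref{oderSfSg}. The key idea is that, once we encode each commutator in $C$ by its sequence of exponents $S_f=(k_1,\ldots,k_n)$, the finite basis property of $(D(\mathbb{N}_0),\preceq)$ automatically gives a finite subfamily that dominates all the others.

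First I would assign to every $f=[x_g,y_1^{(k_1)},\ldots,y_n^{(k_n)}]\in C$ its associated sequence $S_f\in D(\mathbb{N}_0)$. If $\mathbb{K}$ is finite with $q$ elements, I would use identity~(iii) from Theorem~\ref{universalbaseidentidades} to replace each $f\in C$, modulo $\mathrm{Id}_G(\mathrm{UT}_3^{(-)},\Gamma(e_1,e_2))$, by an equivalent commutator whose exponents all lie in $\{0,\ldots,q-1\}$, so that the hypothesis of Lemma~\ref{oderSfSg} is met. (When $\mathbb{K}$ is infinite this reduction is vacuous.) Note that such a reduction does not change the $T_G$-ideal generated, modulo $\mathrm{Id}_G$.

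Next, consider the set $\mathcal{S}=\{S_f\mid f\in C\}\subseteq D(\mathbb{N}_0)$. By Proposition~\ref{Sequenceorder}, $(D(\mathbb{N}_0),\preceq)$ is partially well-ordered, so $\mathcal{S}$ has only finitely many minimal elements; pick $f_1,\ldots,f_s\in C$ whose sequences $S_{f_1},\ldots,S_{f_s}$ realize these minima. By the finite basis property, every $f\in C$ satisfies $S_{f_i}\preceq S_f$ for at least one index $i\in\{1,\ldots,s\}$. Lemma~\ref{oderSfSg} then yields
\[
f\in\langle f_i\rangle^{T_G}+\mathrm{Id}_G(\mathrm{UT}_3^{(-)},\Gamma(e_1,e_2))\subseteq\langle f_1,\ldots,f_s\rangle^{T_G}+\mathrm{Id}_G(\mathrm{UT}_3^{(-)},\Gamma(e_1,e_2)),
\]
proving one inclusion. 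The reverse inclusion is immediate since each $f_i\in C$.

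There is no real obstacle here, as both ingredients (Higman's theorem and the monotonicity Lemma~\ref{oderSfSg}) are already established; the proof is a standard finite-basis-via-partial-well-ordering argument. The only subtlety worth mentioning is the normalization in the finite-field case so that the comparison lemma applies verbatim.
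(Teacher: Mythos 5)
Your proof is correct and follows essentially the same route as the paper: encode each commutator by its exponent sequence $S_f$, invoke Higman's theorem (\Cref{Sequenceorder}) to extract finitely many minimal sequences, and conclude via \Cref{oderSfSg}. Your extra normalization step for finite fields (reducing exponents below $|\mathbb{K}|$ so that the hypothesis of \Cref{oderSfSg} is literally satisfied) is a small point of care that the paper's own proof skips over silently, but it does not change the argument.
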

\begin{proof}
Let $D(\mathbb{N}_0, \preceq)$ be the partially well-ordered set from \Cref{Sequenceorder}. Consider the set $S_{C}=\{S_{f}\mid f\in C\}$. Note that $S_{C}\subseteq D(\mathbb{N}_0)$, so $S_C$ is partially well-ordered. It means there exists a finite set of minimal elements $\{S_{f_1},\ldots, S_{f_s}\}\subseteq S_{C}$. Thus, for each $S_f\in S_{C}$, there is $i\in \{1,\ldots, s\}$, such that $S_{f_i}\preceq S_{f}$. By \Cref{oderSfSg}, we obtain 
\[ \langle f_1,\ldots, f_s\rangle ^{T_G}+ \mathrm{Id}_G(\mathrm{UT}_3^{(-)},\Gamma(e_1,e_2))= \langle C\rangle^{T_G}+\mathrm{Id}_G(\mathrm{UT}_3^{(-)},\Gamma(e_1,e_2)).\]

\end{proof}

For each  $f=[x_{e_1},y_1^{(k_1)}, \ldots, y_n^{(k_n)}, x_{e_2}, y_1^{(l_1)}, \ldots, y_n^{(l_n)}]$, we associate the following finite sequence of elements in $\mathbb{N}_0^2$:
\begin{equation}
V_f= ( (k_1,l_1),\ldots,(k_n,l_n)).
\end{equation}
\begin{lemma}\label{oderseqfe1e2}
 Let $(D(\mathbb{N}_0^2), \preceq)$ be the partially well-ordered set from 
\Cref{Sequenceorder}.  Consider the following commutators 
\[f(x_{e_1}, x_{e_2}, y_1,\ldots, y_n)= [x_{e_1}, y_{1}^{(k_1)},\ldots, y_{n}^{(k_n)}, x_{e_2}, y_1^{(l_1)},\ldots, y_n^{(l_n)}],\]
\[h(x_{e_1}, x_{e_2},y_1,\ldots, y_m)= [x_{e_1}, y_{1}^{(k'_1)},\ldots, y_{m}^{(k'_m)}, x_{e_2}, y_1^{(l'_1)},\ldots, y_m^{(l'_m)}],\]
where $0\leq k_1,\ldots, k_n,l_1,\ldots, l_n,k'_1,\ldots, k'_m,l'_1,\ldots, l'_m$. If $V_f\preceq V_h$, then  $h\in \langle f\rangle^{T_G}+ \mathrm{Id}_G(\mathrm{UT}_3^{(-)},\Gamma(e_1,e_2))$
\end{lemma}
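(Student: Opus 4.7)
The plan is to adapt \Cref{oderSfSg} to the two-non-trivial-variable setting. The main new idea is that rather than substituting into both $x_{e_1}$ and $x_{e_2}$, one can augment the left block of $y$'s (those appearing between $x_{e_1}$ and $x_{e_2}$) by substituting into $x_{e_1}$, and augment the right block (those appearing after $x_{e_2}$) simply by taking an outer Lie bracket with the missing $y$'s. Both operations preserve membership in $\langle f\rangle^{T_G}$, since a $T_G$-ideal is closed under both graded substitutions and Lie brackets with arbitrary polynomials.

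Concretely, extract from $V_f\preceq V_h$ an order-preserving injection $\psi:\{1,\ldots,n\}\to\{1,\ldots,m\}$ with $(k_j,l_j)\leq_2(k'_{\psi(j)},l'_{\psi(j)})$ for every $j$, and set $\{j_1,\ldots,j_{m-n}\}=\{1,\ldots,m\}\setminus\psi(\{1,\ldots,n\})$. First relabel $y_j\mapsto y_{\psi(j)}$ in $f$, producing $\tilde f\in\langle f\rangle^{T_G}$. Then substitute $x_{e_1}$ by the left-normed commutator
$$[x_{e_1},y_{\psi(1)}^{(k'_{\psi(1)}-k_1)},\ldots,y_{\psi(n)}^{(k'_{\psi(n)}-k_n)},y_{j_1}^{(k'_{j_1})},\ldots,y_{j_{m-n}}^{(k'_{j_{m-n}})}].$$
Using $[[a,b],c]=[a,b,c]$, the resulting polynomial $\bar f\in\langle f\rangle^{T_G}$ is a left-normed commutator whose prefix (ending at $x_{e_2}$) collects, for each $j\in\{1,\ldots,m\}$, exactly $k'_j$ copies of $y_j$ after rearrangement, while its suffix still carries the old exponents $l_1,\ldots,l_n$ at the positions $\psi(1),\ldots,\psi(n)$.

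Next, adjust the right block by forming the outer bracket
$$\bar{\bar f}:=[\bar f,y_{\psi(1)}^{(l'_{\psi(1)}-l_1)},\ldots,y_{\psi(n)}^{(l'_{\psi(n)}-l_n)},y_{j_1}^{(l'_{j_1})},\ldots,y_{j_{m-n}}^{(l'_{j_{m-n}})}]\in\langle f\rangle^{T_G}.$$
Using $[y_1,y_2]\in\mathrm{Id}_G$ together with the Jacobi identity to reorder the $y$'s within each block (exactly as in the normal-form argument of \Cref{universalbaseidentidades}), $\bar{\bar f}$ reduces modulo $\mathrm{Id}_G(\mathrm{UT}_3^{(-)},\Gamma(e_1,e_2))$ to
$$[x_{e_1},y_1^{(k'_1)},\ldots,y_m^{(k'_m)},x_{e_2},y_1^{(l'_1)},\ldots,y_m^{(l'_m)}]=h,$$
which yields the claim.

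The only slightly delicate point is verifying that the two operations affect only the intended block: the substitution into $x_{e_1}$ inserts its extra $y$'s in front of the original left block (hence still before $x_{e_2}$), while the outer bracket appends its $y$'s at the tail of the commutator (hence after the original right block). Both facts follow directly from $[[a,b],c]=[a,b,c]$ applied to left-normed commutators, so no essentially new obstacle arises beyond what was already handled in \Cref{oderSfSg}.
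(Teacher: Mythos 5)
Your proof is correct and follows essentially the same route as the paper: relabel the $y$'s via the injection coming from $V_f\preceq V_h$, substitute a left-normed commutator into $x_{e_1}$ to complete the left block, and append the remaining $y$'s by an outer bracket to complete the right block, then reorder within each block using $[y_1,y_2]$ and the Jacobi identity. Your explicit choice to handle the right block with an outer bracket rather than a substitution into $x_{e_2}$ is in fact the careful reading of the paper's terse ``adequate evaluation on $x_{e_1}$ and $x_{e_2}$'' (a naive substitution $x_{e_2}\mapsto[x_{e_2},w]$ would produce extra terms of the form $[x_{e_1},\ldots,w,x_{e_2},\ldots]$ via Jacobi), and it matches the mechanism actually written out in the proof of \Cref{oderSfSg}.
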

\begin{proof}
Since $V_f\preceq V_h$,  there exists a subsequence $((k'_{i_1}, l'_{i_1}), \ldots, (k'_{i_n},l'_{i_n}))$ of $S_h$ such that $(k_j,l_j) \le_2 (k'_{i_j},l'_{i_j})$ for every $ j \in \{1, \ldots,n\}$. Therefore $k_j \leq k'_{i_j}$ and $l_j\leq l'_{i_j}$  for every $j \in \{1,\ldots,n\}$. Then
    \[ \tilde{f}=f(x_{e_1}, x_{e_2}, y_{i_1},\ldots, y_{i_n})=[x_{e_1},y_{i_1}^{(k_1)}, \ldots, y_{i_n}^{(k_n)}, x_{e_2}, y_{i_1}^{(l_1)},\ldots, y_{i_n}^{(l_n)}] \in \left< f \right>^{T_G}. \]
Proceeding similarly as in the proof of \Cref{oderSfSg}, an adequate evaluation on $x_{e_1}$ and $x_{e_2}$ gives $h\in \langle f\rangle^{T_G}+ \mathrm{Id}_G(\mathrm{UT}_3^{(-)},\Gamma(e_1,e_2))$.
\end{proof}

\begin{definition}\label{orderlinearmonomios}
Consider the following commutators of the same multidegree
\[f(x_{e_1},x_{e_2}, y_1,\ldots, y_n)= [x_{e_1}, y_{1}^{(k_1)},\ldots, y_{n}^{(k_n)}, x_{e_2}, y_1^{(l_1)},\ldots, y_n^{(l_n)}],\]
\[h(x_{e_1}, x_{e_2}, y_1,\ldots, y_n)= [x_{e_1}, y_{1}^{(k'_1)},\ldots, y_{n}^{(k'_n)}, x_{e_2}, y_1^{(l'_1)},\ldots, y_n^{(l'_n)}],\]
where $0\leq k_1,\ldots, k_n,l_1,\ldots, l_n,k'_1,\ldots, k'_n,l'_1$. 
We say that $f<' h$ if either
\begin{itemize}
\item[(a)] $\sum\limits_{i=1}^n k_i< \sum\limits_{i=1}^n k'_i$, or
\item[(b)] $\sum\limits_{i=1}^n k_i= \sum\limits_{i=1}^n k'_i$ and $(k_1,\ldots, k_n)<_{l} (k'_1,\ldots, k'_n)$.
\end{itemize}
Here, $<_l$ is the lexicographic order on $\mathbb{N}_0^ n$.
\end{definition}

\begin{definition}\label{monomiolider}
Let $f$ be a multihomogeneous polynomial such that
\[f(x_{e_1}, x_{e_2}, y_1,\ldots, y_n)=\sum\limits_{s=1}^t \alpha_{s}[x_{e_1},y_1^{(p_{1,s})},\ldots,y_r^{(p_{r,s})},x_{e_2},y_1^{(q_{1,s})},\ldots,y_r^{(q_{r,s})}],\] 
where $\alpha_s\in\mathbb{K}\setminus\{0\}$ for $1\leq s\leq t$. The leading monomial of $f$ with respect to $<'$ is
\[\mathcal{M}(f)=\max_{<'}\{ [x_{e_1},y_1^{(p_{1,s})},\ldots,y_r^{(p_{r,s})},x_{e_2},y_1^{(q_{1,s})},\ldots,y_r^{(q_{r,s})}]\mid 1\leq s\leq t\}.\]
\end{definition}
\begin{lemma}\label{compativel}
Let $f$ and $h$ be multihomogeneous polynomials of the same multidegree and write
\[f(x_{e_1},x_{e_2}, y_1,\ldots, y_n)= [x_{e_1}, y_{1}^{(k_1)},\ldots, y_{n}^{(k_n)}, x_{e_2}, y_1^{(l_1)},\ldots, y_n^{(l_n)}],\]
\[h(x_{e_1}, x_{e_2}, y_1,\ldots, y_n)= [x_{e_1}, y_{1}^{(k'_1)},\ldots, y_{n}^{(k'_n)}, x_{e_2}, y_1^{(l'_1)},\ldots, y_n^{(l'_n)}].\]
If $f<' h$, then
\begin{enumerate}\renewcommand{\labelenumi}{(\roman{enumi})}
\item \begin{equation*}
\begin{split}
[x_{e_1}, y_{1}^{(k_1)},\ldots, y_{n}^{(k_n)}, y_{n+1}, x_{e_2}, y_1^{(l_1)},\ldots, y_n^{(l_n)}]& <'\\
  [x_{e_1}, y_{1}^{(k'_1)},\ldots, y_{n}^{(k'_n)}, y_{n+1}, x_{e_2}, y_1^{(l'_1)},\ldots, y_n^{(l'_n)}]
\end{split}\end{equation*}
\item \begin{equation*}
\begin{split}[x_{e_1}, y_{1}^{(k_1)},\ldots, y_{n}^{(k_n)}, x_{e_2}, y_1^{(l_1)},\ldots, y_n^{(l_n)}, y_{n+1}]&<'\\
[x_{e_1}, y_{1}^{(k'_1)},\ldots, y_{n}^{(k'_n)}, x_{e_2}, y_1^{(l'_1)},\ldots, y_n^{(l'_n)}, y_{n+1}]\end{split}\end{equation*}
    \item  \begin{equation*}\begin{split}
    [x_{e_1}, y_{1}^{(k_1)},\ldots, y_i^{(k_i+1)},\ldots,  y_{n}^{(k_n)}, x_{e_2}, y_1^{(l_1)},\ldots, y_n^{(l_n)}]<'&\\
    [x_{e_1}, y_{1}^{(k'_1)},\ldots, y_i^{(k'_i+1)},\ldots, y_{n}^{(k'_n)}, x_{e_2}, y_1^{(l'_1)},\ldots, y_n^{(l'_n)}]\end{split}\end{equation*}
    \item \begin{equation*}
    \begin{split}
    [x_{e_1}, y_{1}^{(k_1)},\ldots,  y_{n}^{(k_n)}, x_{e_2}, y_1^{(l_1)},\ldots, y_i^{(l_i+1)},\ldots, y_n^{(l_n)}]<'&\\
    [x_{e_1}, y_{1}^{(k'_1)},\ldots, y_{n}^{(k'_n)}, x_{e_2}, y_1^{(l'_1)},\ldots,  y_i^{(l'_i+1)},\ldots, y_n^{(l'_n)}]
    \end{split}
    \end{equation*}
\end{enumerate}
\end{lemma}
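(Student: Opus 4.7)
The plan is to verify each of the four items (i)--(iv) directly from the definition of $<'$ in \Cref{orderlinearmonomios}, splitting according to whether $f <' h$ holds by virtue of (a) a strict inequality $\sum_{j=1}^n k_j < \sum_{j=1}^n k'_j$, or (b) an equality of these sums together with $(k_1,\ldots,k_n) <_l (k'_1,\ldots,k'_n)$. Observe first that since $f$ and $h$ share the same multidegree, each operation described in (i)--(iv) is applied symmetrically and produces two new polynomials of equal multidegree, so they remain comparable under $<'$.

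For items (i) and (iii), the operation adds $1$ to each of the sums $\sum k_j$ and $\sum k'_j$, so a strict inequality in case (a) is preserved. In case (b), I must check that the new $k$-sequences are still in strict lex order. For (i), the new sequences are $(k_1,\ldots,k_n,1)$ and $(k'_1,\ldots,k'_n,1)$; the lex comparison is determined by the first $n$ coordinates, which already satisfy $<_l$. For (iii), the new sequences are obtained by incrementing coordinate $i$ in both; writing the comparison position by position, the first index of disagreement is unchanged and the sign of the inequality at that index is preserved, since $k_i+1 < k'_i+1$ iff $k_i < k'_i$ and $k_i+1 = k'_i+1$ iff $k_i = k'_i$.

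For items (ii) and (iv), the operation leaves both $\sum k_j$ and $\sum k'_j$ unchanged, so case (a) is immediate. In case (b), the $k$-sequences are either unchanged (for (iv), where only an $l$-coordinate is incremented) or both extended by a trailing $0$ (for (ii)); in either situation the lex comparison of the new sequences reduces to the original one on the first $n$ coordinates and remains strict.

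No step here presents any real obstacle; the proof is a routine case-by-case verification from \Cref{orderlinearmonomios}. The only point requiring a bit of care is the observation, in case (b) of (i) and (iii), that the lexicographic order on $\mathbb{N}_0^n$ interacts well with a common shift at a fixed coordinate and with a common appended coordinate, which amounts to the elementary remark that adding the same value at the same position on both sides of a lex inequality preserves it.
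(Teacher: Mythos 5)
Your proposal is correct and matches the paper's intent exactly: the paper dismisses this lemma with ``The proof is straightforward,'' and your case-by-case check against the two clauses of \Cref{orderlinearmonomios} (noting that $<'$ depends only on the $k$-exponents, so (ii) and (iv) are immediate and (i), (iii) reduce to the stability of the lexicographic order under a common appended or incremented coordinate) is precisely the routine verification being omitted.
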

\begin{proof}
 The proof is straightforward.
\end{proof}
As a consequence of the previous lemma, let $f$ and $h$ be as above, with $f<'h$. Then $\psi(f)<'\psi(h)$ and $[f,y]<'[h,y]$, for any evaluation $\psi$ of the kind $x_{e_i}\mapsto[x_{e_i},y_1^{(r_1)},\ldots,y_n^{(r_n)}]$ and $y_i\mapsto y_{s_i}$ (with $s_1<s_2<\cdots$), and variable of trivial degree $y$. Hence, the leading monomial of the polynomial $[\psi(f),y_1^{(k_1)},\ldots,y_n^{(k_n)}]$ is $[\psi(\mathcal{M}(f)),y_1^{(k_1)},\ldots,y_n^{(k_n)}]$, for $\psi$ as before.

\begin{definition}\label{poltypee_1e_2}
We say that $f$ is polynomial of type $(e_1, e_2)$ if $f$ is multihomogeneous and
\[f(x_{e_1}, x_{e_2}, y_1,\ldots, y_n)=\sum\limits_{s=1}^t \alpha_{s}[x_{e_1},y_1^{(p_{1,s})},\ldots,y_r^{(p_{r,s})},x_{e_2},y_1^{(q_{1,s})},\ldots,y_r^{(q_{r,s})}].\] 
\end{definition}
\begin{lemma}\label{Consequenceuniversal}
Let $f$ and $h$ be polynomials of type $(e_1, e_2)$ and suppose that $V_{\mathcal{M}(f)}\preceq V_{\mathcal{M}(h)}$. Then, there exists $h'\in\langle f\rangle^{T_G}+\mathrm{Id}_G(\mathrm{UT}_3^{(-)},\Gamma(e_1,e_2))$  such that $\mathcal{M}(h)=\mathcal{M}(h')$.
\end{lemma}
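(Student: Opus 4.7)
The plan is to construct $h'$ by imitating the argument of \Cref{oderseqfe1e2} but applied to $f$ as a whole rather than to a single monomial, and then to invoke the order–compatibility statement following \Cref{compativel} to pin down the leading term.

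Write
\[\mathcal{M}(f) = [x_{e_1}, y_1^{(k_1)}, \ldots, y_n^{(k_n)}, x_{e_2}, y_1^{(l_1)}, \ldots, y_n^{(l_n)}]\]
and
\[\mathcal{M}(h) = [x_{e_1}, y_1^{(k'_1)}, \ldots, y_m^{(k'_m)}, x_{e_2}, y_1^{(l'_1)}, \ldots, y_m^{(l'_m)}].\]
The hypothesis $V_{\mathcal{M}(f)} \preceq V_{\mathcal{M}(h)}$ selects indices $i_1 < \cdots < i_n$ in $\{1,\ldots,m\}$ with $k_j \le k'_{i_j}$ and $l_j \le l'_{i_j}$; let $\{j_1,\ldots,j_{m-n}\}$ be the complementary set of indices. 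Define a substitution $\psi$ by $y_t \mapsto y_{i_t}$ for $1 \le t \le n$ and by
\[x_{e_1} \mapsto [x_{e_1}, y_{i_1}^{(k'_{i_1}-k_1)}, \ldots, y_{i_n}^{(k'_{i_n}-k_n)}, y_{j_1}^{(k'_{j_1})}, \ldots, y_{j_{m-n}}^{(k'_{j_{m-n}})}],\]
\[x_{e_2} \mapsto [x_{e_2}, y_{i_1}^{(l'_{i_1}-l_1)}, \ldots, y_{i_n}^{(l'_{i_n}-l_n)}, y_{j_1}^{(l'_{j_1})}, \ldots, y_{j_{m-n}}^{(l'_{j_{m-n}})}],\]
and set $h' := \psi(f) \in \langle f\rangle^{T_G}$.

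Decompose $f = \alpha_0 \mathcal{M}(f) + \sum_s \alpha_s m_s$, where each $m_s$ has the same multidegree as $\mathcal{M}(f)$ and satisfies $m_s <' \mathcal{M}(f)$. The argument from \Cref{oderseqfe1e2}, applied to the single monomial $\mathcal{M}(f)$, combined with the rewriting into the basis of type $(e_1,e_2)$ by means of the Jacobi identity and parts (iii)–(iv) of \Cref{compativel}, shows that the leading monomial of $\psi(\mathcal{M}(f))$ modulo $\mathrm{Id}_G(\mathrm{UT}_3^{(-)},\Gamma(e_1,e_2))$ is precisely $\mathcal{M}(h)$. For each $s$, the remark following \Cref{compativel} yields $\psi(m_s) <' \psi(\mathcal{M}(f))$, so every monomial contributed by the $m_s$'s sits strictly below $\mathcal{M}(h)$ in the $<'$ order. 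Collecting leading terms modulo $\mathrm{Id}_G(\mathrm{UT}_3^{(-)},\Gamma(e_1,e_2))$ then gives $\mathcal{M}(h') = \mathcal{M}(h)$.

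The main obstacle is the bookkeeping needed to verify that, after rewriting $\psi(\mathcal{M}(f))$ into type–$(e_1,e_2)$ form via the Jacobi identity, the resulting polynomial really has $\mathcal{M}(h)$ as its leading monomial rather than some larger one; this reduces to an inductive use of \Cref{compativel} as each $y$ variable is moved into its canonical position, with the additional occurrences contributed by the image of $x_{e_1}$ lying to the left of $x_{e_2}$ and those from the image of $x_{e_2}$ to the right, exactly matching the layout of $\mathcal{M}(h)$.
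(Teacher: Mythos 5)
Your overall strategy (realize $V_{\mathcal{M}(f)}\preceq V_{\mathcal{M}(h)}$ by a substitution and then track leading monomials via \Cref{compativel}) is the right one, but the specific substitution you chose breaks the key step. The problem is the assignment $x_{e_2}\mapsto[x_{e_2},y_{i_1}^{(l'_{i_1}-l_1)},\ldots]$. In a Lie algebra, substituting a longer commutator into the \emph{inner} position $x_{e_2}$ of the left-normed monomial $\mathcal{M}(f)=[x_{e_1},\ldots,x_{e_2},\ldots]$ does not simply place the new $y$'s to the right of $x_{e_2}$: writing $C=[\psi(x_{e_1}),y_{i_1}^{(k_1)},\ldots]$ and taking a single new variable $u$ for illustration, the Jacobi identity gives
\[
[C,[x_{e_2},u]]=[C,x_{e_2},u]-[C,u,x_{e_2}],
\]
and the second summand is a nonzero basis monomial of type $(e_1,e_2)$ in which $u$ sits in the \emph{first} block. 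By condition (a) of \Cref{orderlinearmonomios} that monomial is strictly \emph{larger} with respect to $<'$ than $[C,x_{e_2},u]$, since its first-block degree $\sum k_i$ is larger. Hence, whenever the $x_{e_2}$-substitution is nontrivial, the leading monomial of $\psi(\mathcal{M}(f))$, after rewriting into the basis of \Cref{universalbaseidentidades}, is a monomial with some of the new $y$'s pushed into the first block, not $\mathcal{M}(h)$. So your claim that ``the leading monomial of $\psi(\mathcal{M}(f))$ is precisely $\mathcal{M}(h)$'' fails, and with it the conclusion $\mathcal{M}(h')=\mathcal{M}(h)$; these unwanted dominant terms cannot simply be discarded, since they need not lie in $\langle f\rangle^{T_G}+\mathrm{Id}_G(\mathrm{UT}_3^{(-)},\Gamma(e_1,e_2))$.

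The repair is the one the paper uses: substitute only into $x_{e_1}$ (where $[[x_{e_1},U],\ldots]=[x_{e_1},U,\ldots]$ is already left-normed, so no cross terms arise) together with the relabeling $y_t\mapsto y_{i_t}$, and introduce the additional second-block occurrences by multiplying \emph{externally} on the right, that is, take $h':=[\psi(f),g_1,\ldots,g_k]$ with $g_1,\ldots,g_k$ variables of trivial degree. External right multiplication appends variables to the second block as a single monomial, each block is then sorted using the identity $[y_1,y_2]$, and the remark following \Cref{compativel} guarantees that these operations carry the leading monomial of $f$ to the leading monomial of $[\psi(f),g_1,\ldots,g_k]$, which equals $\mathcal{M}(h)$. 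With that modification, the rest of your argument (the lower terms $m_s<'\mathcal{M}(f)$ remain strictly below after applying $\psi$ and the external multiplications) goes through.
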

\begin{proof}
From the proof of \Cref{oderseqfe1e2}, there exists a homomorphism $\psi:\mathcal{L}(X^G)\to\mathcal{L}(X^G)$ (as mentioned in the paragraph above \Cref{poltypee_1e_2}) and variables of trivial degree $g_{1}$, \dots, $g_k\in\mathcal{L}(X^G)$ such that
\[[\psi(\mathcal{M}(f)),g_{1},\ldots,g_{k}]=\mathcal{M}(h).
\]
Then,
\[h':=[\psi(f),g_{1},\ldots,g_{k}]\in \langle f \rangle^{T_G}+\mathrm{Id}_G(\mathrm{UT}_3^{(-)},\Gamma(e_1,e_2)).\] Applying \Cref{compativel}, we conclude that $\mathcal{M}(h)=\mathcal{M}(h')$.
\end{proof}
\begin{lemma}\label{finitoe1e2}
Let $J$ be a $T_G$-ideal such that $\mathrm{Id}_G(\mathrm{UT}_3^{(-)},\Gamma(e_1,e_2))\subseteq J$. Consider the set
\[\mathcal{A}^{(e_1,e_2)}=\{\textrm{polynomials of type $(e_1,e_2)$}\}\cap J.\]
Then, there exists a finite subset $\widetilde{A}\subseteq \mathcal{A}^{(e_1,e_2)}$ such that
\[\mathcal{A}^{(e_1,e_2)}\subseteq \langle \widetilde{A}\rangle^{T_G}+\mathrm{Id}_G(\mathrm{UT}_3^{(-)},\Gamma(e_1,e_2)).\]
\end{lemma}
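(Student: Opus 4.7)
The plan is to use the finite-basis property of $D(\mathbb{N}_0^2)$ given by \Cref{Sequenceorder} to extract from $\mathcal{A}^{(e_1,e_2)}$ finitely many representatives whose leading-monomial sequences $V_{\mathcal{M}(\cdot)}$ are the minimal elements of the set of all such sequences, and then to perform descent on the order $<'$ to reduce an arbitrary element of $\mathcal{A}^{(e_1,e_2)}$ to these representatives modulo $\mathrm{Id}_G(\mathrm{UT}_3^{(-)},\Gamma(e_1,e_2))$.

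More precisely, consider the set $\Sigma = \{V_{\mathcal{M}(f)} : f \in \mathcal{A}^{(e_1,e_2)},\; f \notin \mathrm{Id}_G(\mathrm{UT}_3^{(-)},\Gamma(e_1,e_2))\} \subseteq D(\mathbb{N}_0^2)$. By \Cref{Sequenceorder} it has only finitely many minimal elements, so I can pick $f_1,\ldots,f_s \in \mathcal{A}^{(e_1,e_2)}$ whose leading-monomial sequences realize them and set $\widetilde{A} = \{f_1,\ldots,f_s\}$. For an arbitrary $f \in \mathcal{A}^{(e_1,e_2)}$, the multidegree of $f$ (in particular, degree $1$ in both $x_{e_1}$ and $x_{e_2}$) admits only finitely many type-$(e_1,e_2)$ basis commutators, so $<'$ restricts there to a finite linear order and I may induct on $\mathcal{M}(f)$. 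The base case $f \in \mathrm{Id}_G(\mathrm{UT}_3^{(-)},\Gamma(e_1,e_2))$ is immediate. Otherwise, by construction some $f_i \in \widetilde{A}$ satisfies $V_{\mathcal{M}(f_i)} \preceq V_{\mathcal{M}(f)}$, and \Cref{Consequenceuniversal} yields $h' \in \langle f_i \rangle^{T_G} + \mathrm{Id}_G(\mathrm{UT}_3^{(-)},\Gamma(e_1,e_2))$ with $\mathcal{M}(h') = \mathcal{M}(f)$. Rewriting $h'$ in its normal form modulo $\mathrm{Id}_G(\mathrm{UT}_3^{(-)},\Gamma(e_1,e_2))$ via \Cref{universalbaseidentidades} produces a polynomial of type $(e_1,e_2)$ of the same multidegree and leading monomial as $f$; after scaling so that the leading coefficients match, the polynomial $f'$ obtained as the normal form of $f - c h'$ lies in $J$, is of type $(e_1,e_2)$, and satisfies $\mathcal{M}(f') <' \mathcal{M}(f)$. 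Thus $f' \in \mathcal{A}^{(e_1,e_2)}$, the induction hypothesis gives $f' \in \langle \widetilde{A} \rangle^{T_G} + \mathrm{Id}_G(\mathrm{UT}_3^{(-)},\Gamma(e_1,e_2))$, and consequently so does $f$.

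I expect the main subtle point to be that the polynomial $h'$ supplied by \Cref{Consequenceuniversal} is a priori only a $T_G$-consequence of $f_i$ and is not itself of type $(e_1,e_2)$, so one must rewrite it into its normal form using \Cref{universalbaseidentidades} and verify that the leading monomial is preserved under this rewriting. This is precisely where the compatibility of substitution and bracketing with the order $<'$ established in \Cref{compativel} and the remark immediately following it plays the decisive role, ensuring both that the descent step legitimately lowers $\mathcal{M}(f)$ and that the resulting polynomial remains inside $\mathcal{A}^{(e_1,e_2)}$.
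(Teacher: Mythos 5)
Your proposal is correct and follows essentially the same route as the paper: extract representatives of the finitely many minimal elements of $\{V_{\mathcal{M}(f)}\}$ via \Cref{Sequenceorder}, then use \Cref{Consequenceuniversal} together with the compatibility of $<'$ from \Cref{compativel} to cancel leading monomials and descend within a fixed multidegree. The only cosmetic difference is that you phrase the descent as induction on $\mathcal{M}(f)$ along the finite total order $<'$, whereas the paper argues by taking a minimal counterexample; these are equivalent.
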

\begin{proof}
Consider the sets
\[\mathcal{M}^{(e_1,e_2)}=\{\mathcal{M}(f)\mid f\in \mathcal{A}^{(e_1,e_2)}\},\]
\[\mathcal{V}^{(e_1,e_2)}= \{V_{\mathcal{M}(f)}\mid f\in \mathcal{M}^{(e_1,e_2)}\}.\]

By \Cref{Sequenceorder}, there exist $\mathcal{M}(f_1)$,\dots, $\mathcal{M}(f_t)\in \mathcal{M}^{(e_1,e_2)}$ such that $V_{\mathcal{M}(f_1)}$,\dots, $V_{\mathcal{M}(f_t)}$ are minimal elements of $\mathcal{V}^{(e_1,e_2)}$. By \Cref{oderseqfe1e2},
\[\mathcal{M}^{(e_1,e_2)}\subseteq \langle \mathcal{M}(f_1),\ldots, \mathcal{M}(f_t)\rangle^{T_G}+ \mathrm{Id}_G(\mathrm{UT}_3^{(-)},\Gamma(e_1,e_2)). \]
Now, we define $\widetilde{A}=\{f_1,\ldots, f_t\}$
and we shall show that
$\mathcal{A}^{(e_1,e_2)}\subseteq \langle \widetilde{A}\rangle^{T_G}+\mathrm{Id}_G(\mathrm{UT}_3^{(-)},\Gamma(e_1,e_2)).$
Suppose that there is a polynomial $f\in \mathcal{A}^{(e_1,e_2)}$  such that \[f\notin  \langle \widetilde{A}\rangle^{T_G}+\mathrm{Id}_G(\mathrm{UT}_3^{(-)},\Gamma(e_1,e_2))\] and 
$\deg_{y_i} f=d_i$ for $1\leq i\leq n$. Fixing the multidegree $(d_1,\ldots, d_n)$ in the variables $y$'s, consider the following sets
\[A^{(d_1,\ldots, d_n)}=\{h(x_{e_1},x_{e_2},y_1,\ldots, y_n)\in \mathcal{A}^{(e_1,e_2)}\mid h\notin\langle\widetilde{A}\rangle^{T_G}+\mathrm{Id}_G(\mathrm{UT}_3^{(-)},\Gamma(e_1,e_2))\, \},\]
\[\mathcal{M}^{(d_1,\ldots, d_n)}= \{\mathcal{M}(h)\mid h\in A^{(d_1,\ldots, d_n)} \}.\]
Note that $f\in A^{(d_1,\ldots, d_n)}$. Thus, without loss of generality we can suppose that
    \[\mathcal{M}(f)=\min_{<'}\, \mathcal{M}^{(d_1,\ldots, d_n)}.\]
Notice that there is $f_i\in \widetilde{A}$, for some $1\leq i\leq t$, such that $V_{\mathcal{M}(f_i)}\preceq V_{\mathcal{M}(f)}$. Then, by \Cref{Consequenceuniversal}, there is a polynomial $h$ of type $(e_1,e_2)$ such that 
\[h\in \langle f_i\rangle^{T_G}+ \mathrm{Id}_G(\mathrm{UT}_3^{(-)},\Gamma(e_1,e_2)) \]
and $\mathcal{M}(f)=\mathcal{M}(h)$. Without loss of generality, we shall suppose the coefficients of $\mathcal{M}(h)$ and $\mathcal{M}(f)$ in $h$ and $f$, respectively, to be equal to 1. 
If $\widehat{f}=h-f\neq 0$, then $\widehat{f} \in A^{(d_1,\ldots, d_n)}$ and $\mathcal{M}(\widehat{f}) <' \mathcal{M}(f)$, a contradiction. Therefore, $f=h$ and we have again a contradiction. Hence, we proved that \[\mathcal{A}^{(e_1,e_2)}\subseteq \langle \widetilde{A}\rangle^{T_G}+\mathrm{Id}_G(\mathrm{UT}_3^{(-)},\Gamma(e_1,e_2)).\]  
\end{proof}

\begin{theorem}\label{specthUniversal}
Let $G=\langle e_1,e_2\rangle$ be the free abelian group of rank $2$, $\mathbb{K}$ an infinite field, and $J$ be a $T_G$-ideal such that $\mathrm{Id}_G(\mathrm{UT}_3^{(-)}(\mathbb{K}),\Gamma(e_1,e_2))\subseteq J$. Then, there exists a finite subset $S\subseteq\mathcal{L}(X^G)$ such that $J=\langle S\rangle^{T_G}+\mathrm{Id}_G(\mathrm{UT}_3^{(-)},\Gamma(e_1,e_2))$.
\end{theorem}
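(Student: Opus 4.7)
The plan is to use the fact that, since $\mathbb{K}$ is infinite, $J$ is generated modulo $\mathrm{Id}_G(\mathrm{UT}_3^{(-)},\Gamma(e_1,e_2))$ by its multihomogeneous components, and then to partition these components by the structure imposed by the basis from \Cref{universalbaseidentidades}. Identities (i) and (ii) of that theorem force every non-zero multihomogeneous element of $\mathcal{L}(X^G)$, modulo $\mathrm{Id}_G(\mathrm{UT}_3^{(-)},\Gamma(e_1,e_2))$, to fall into exactly one of three types: (a) a single variable of degree $g\in\mathrm{Supp}\,\Gamma(e_1,e_2)$; (b) a scalar multiple of a type (1) basis commutator $[x_g,y_1^{(p_1)},\ldots,y_r^{(p_r)}]$ with $g\in\{e_1,e_2,e_1e_2\}$, uniquely determined by its multidegree; or (c) a polynomial of type $(e_1,e_2)$ in the sense of \Cref{poltypee_1e_2}. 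This trichotomy is immediate, since the total $G$-degrees in $\mathrm{Supp}\,\Gamma(e_1,e_2)$ are exactly $1$, $e_1$, $e_2$, and $e_1 e_2$, and identity (ii) kills every commutator of length $\geq 2$ involving only trivial-degree variables.

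Each of the three types is then handled separately. For (a), the $T_G$-stability of $J$ means that as soon as a single variable of degree $g$ lies in $J$, so does every variable of that degree; hence at most four generators suffice, one per element of $\mathrm{Supp}\,\Gamma(e_1,e_2)$. For (b), fix $g\in\{e_1,e_2,e_1e_2\}$ and let $C_g$ be the collection of type (1) commutators in $J$ with non-trivial variable $x_g$; \Cref{finitecomutatoruniversal} supplies a finite subset $S_g\subseteq C_g$ for which $\langle S_g\rangle^{T_G}+\mathrm{Id}_G(\mathrm{UT}_3^{(-)},\Gamma(e_1,e_2))=\langle C_g\rangle^{T_G}+\mathrm{Id}_G(\mathrm{UT}_3^{(-)},\Gamma(e_1,e_2))$, and by uniqueness of the basis commutator of a given multidegree, every type (1) multihomogeneous component of $J$ is captured by $S_g$ modulo $\mathrm{Id}_G(\mathrm{UT}_3^{(-)},\Gamma(e_1,e_2))$. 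For (c), \Cref{finitoe1e2} applied to $\mathcal{A}^{(e_1,e_2)}=\{\text{polynomials of type }(e_1,e_2)\}\cap J$ produces a finite subset $\widetilde{A}\subseteq J$ whose $T_G$-ideal, together with $\mathrm{Id}_G(\mathrm{UT}_3^{(-)},\Gamma(e_1,e_2))$, contains the entire type-$(e_1,e_2)$ contribution.

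Taking $S$ to be the union of the single-variable generators from (a), the sets $S_{e_1}$, $S_{e_2}$, $S_{e_1 e_2}$ from (b), and $\widetilde{A}$ from (c), we obtain a finite subset of $J$ with $J=\langle S\rangle^{T_G}+\mathrm{Id}_G(\mathrm{UT}_3^{(-)},\Gamma(e_1,e_2))$. The technical heavy lifting — Higman's well-quasi-order theorem applied to $\mathbb{N}_0$ and $\mathbb{N}_0^2$, together with the leading-monomial machinery of \Cref{compativel} and \Cref{Consequenceuniversal} — has already been executed in \Cref{finitecomutatoruniversal} and \Cref{finitoe1e2}, so no substantial new obstacle remains; the present theorem is a clean assembly of these pieces with the standard infinite-field multihomogeneous reduction.
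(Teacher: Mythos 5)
Your proposal is correct and follows essentially the same route as the paper: reduce to multihomogeneous polynomials, split them according to the basis of \Cref{universalbaseidentidades} into type (1) commutators (one per nontrivial degree $e_1$, $e_2$, $e_1e_2$) and polynomials of type $(e_1,e_2)$, and then invoke \Cref{finitecomutatoruniversal} and \Cref{finitoe1e2} to extract finite generating subsets. Your separate case (a) for lone variables is subsumed by the $r=0$ instance of the type (1) commutators and is harmless.
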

\begin{proof}
Since $\mathbb{K}$ is an infinite field, $J$ is generated as $T_G$-ideal by its multihomogeneous polynomials. Consider the sets
\[\mathcal{B}^{g}=\{[x_g, y_{1}^{(k_1)},\ldots, y_{n}^{(k_n)}]\mid n\geq 1, k_i\geq 0\}\cap J,\]
\[\mathcal{A}^{(e_1,e_2)}=\{\textrm{polynomials of type $(e_1,e_2)$}\}\cap J,\]
where $g\in\{e_1,e_2,e_1e_2\}$. By \Cref{universalbaseidentidades}, we have 
\[J=\langle \mathcal{B}^{e_1}\cup\mathcal{B}^{e_2}\cup\mathcal{B}^{e_1e_2}\cup\mathcal{A}^{(e_1,e_2)}\rangle^{T_G}+ \mathrm{Id}_G(\mathrm{UT}_3^{(-)},\Gamma(e_1,e_2)).\]
From \Cref{finitecomutatoruniversal,finitoe1e2}, there exist finite subsets $\widetilde{B}^{g}\subseteq\mathcal{B}^{g}$  and $\widetilde{A}\subseteq \mathcal{A}^{(e_1,e_2)}$, such that
\[\mathcal{B}^{g}\subseteq \langle \widetilde{B}^{g}\rangle^{T_G}+\mathrm{Id}_G(\mathrm{UT}_3^{(-)},\Gamma(e_1,e_2)),\]
\[\mathcal{A}^{(e_1,e_2)}\subseteq \langle \widetilde{A}\rangle^{T_G}+\mathrm{Id}_G(\mathrm{UT}_3^{(-)},\Gamma(e_1,e_2)),\]
for each $g\in\{e_1,e_2,e_1e_2\}$. Hence, \[J= \langle \widetilde{B}^{e_1}\cup\widetilde{B}^{e_2}\cup\widetilde{B}^{e_1e_2}\cup\widetilde{A}\rangle^{T_G}+ \mathrm{Id}_G(\mathrm{UT}_3^{(-)},\Gamma(e_1,e_2)),\]
and the result is proved.
\end{proof}
\subsection{Almost Universal Grading}
Let $g\in G$ be an element of order $\geq 3$. We shall study $\mathrm{UT}_3^{(-)}$ endowed with the Almost Universal grading, that is, the elementary grading given by $\Gamma(g, g^{-1})$.

By direct computation, we can see that the following are graded identities for $(\mathrm{UT}_3^{(-)},\Gamma(g, g^{-1}))$:
\begin{equation}
    x_l=0,\quad  l\notin\mathrm{Supp}\,\Gamma(g,g^{-1}),
\end{equation}
\begin{equation}\label{almostuniversal2}
    [x_{1,l},x_{2,l}]=0\quad l\in\{g,g^{-1}\},
\end{equation}
\begin{equation}\label{almostuniversal3}
    [x_{1,l}, x_{1,h},x_{2,l}]=0\quad l,h\in\{g,g^{-1}\}, l\neq h,
\end{equation}
\begin{equation}\label{almostuniversal4}
    [y_1,y_2, x_{l}]=0 \quad  l\in\{g,g^{-1}\}.
\end{equation}
\begin{equation}\label{meatabelian}
    [[y_1,y_2],[y_3,y_4]]=0,
\end{equation}
In addition, if $\mathbb{K}$ is finite with $q$ elements, then the following polynomials
\begin{equation}\label{almostuniversal6}
[x_{l},y^{(q)}]-[x_{l},y]=0\quad  l\in\{g,g^{-1}\};
\end{equation}
\begin{equation}\label{almostuniversal7}
    [y_1, y_2, y_3^{(q)}]-[y_1, y_2, y_3]=0;
\end{equation}
\begin{equation}\label{almostuniversal8}
    [y_2, y_1, y_2^{(q-1)}]+ [y_2,y_1]-[y_2,y_1^{(q)}]-[y_2,y_1, y_2^{(q-1)}]
\end{equation}
are graded identities as well. Note that \eqref{meatabelian}, as well as \eqref{almostuniversal7} and \eqref{almostuniversal8} when $\mathbb{K}$ is finite, are (ordinary) polynomial identities of $\mathrm{UT}_2^{(-)}(\mathbb{K})$ (indeed, $\mathrm{UT}_2^{(-)}$ is a subalgebra of the identity component of $\Gamma(g,g^{-1})$). The characterization of $\mathrm{Id}_G(\Gamma(g,g^{-1}))$ is as follows.
\begin{theorem}\label{almostuniversalbaseidentidades}
Let $\mathbb{K}$ be an arbitrary field and consider the Almost Universal $G$-grading $\Gamma(g,g^{-1})$ on $\mathrm{UT}_3^{(-)}(\mathbb{K})$ (where $g^2\ne1$). Then, $\mathrm{Id}_G(\mathrm{UT}_3^{(-)},\Gamma(g,g^{-1}))$ is generated, as a $T_G$-ideal, by:
\renewcommand{\labelenumi}{(\roman{enumi})}
 \begin{enumerate}
 \item $x_l$, $l\notin\mathrm{Supp}\,\Gamma(g, g^{-1})$,
 \item $[x_{1,l},x_{2,l}]$, $l\in\{g,g^{-1}\}$,
 \item $  [x_{1,l}, x_{1,h},x_{2,l}]$, $l,h\in\{g,g^{-1}\}$, $l\neq h$,
 \item  $[y_1,y_2, x_{l}]$, $l\in\{g,g^{-1}\}$,
 \item $\mathrm{Id}(\mathrm{UT}_2^{(-)})\cap\mathcal{L}(Y)$.
\end{enumerate}
In addition, if $\mathbb{K}$ is finite containing $q$ elements, then we include:
\begin{enumerate}
\setcounter{enumi}{5}
\item $[x_{l},y^{(q)}]-[x_{l},y]$, $l\in\{g,g^{-1}\}$.
\end{enumerate}
A basis of the relatively free algebra $\mathcal{L}(X^G)$, modulo $\mathrm{Id}_G(\mathrm{UT}_3^{(-)},\Gamma(g,g^{-1}))$, constitute of all the following polynomials:
   \begin{align}
    &[x_l,y_1^{(p_1)},\ldots,y_r^{(p_r)}],\label{gensetaluniversalinfinito1}\\
    &[[x_{g},y_1^{(p_1)},\ldots,y_r^{(p_r)}],[x_{g^{-1}},y_1^{(q_1)},\ldots,y_r^{(q_r)}]]\label{gensetaluniversalinfinito2},
    \end{align}
 where $l\in \{g, g^{-1}\}$, $r\ge0$ and $0\le p_\ell,q_\ell<|\mathbb{K}|$ for each $\ell\in\{1,2,\ldots,r\}$, and a basis of the relatively free algebra of $\mathrm{UT}_2(\mathbb{K})^{(-)}$ in the variables $Y$.
 \end{theorem}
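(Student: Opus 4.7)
The plan is to mirror the two-step structure of \Cref{universalbaseidentidades}: first check by direct evaluation in $\mathrm{UT}_3^{(-)}$ that (i)--(vi) are graded identities; then show that modulo the $T_G$-ideal $T$ they generate every Lie monomial in $\mathcal{L}(X^G)$ reduces to a linear combination of \eqref{gensetaluniversalinfinito1}, \eqref{gensetaluniversalinfinito2}, and a fixed basis of the relatively free algebra of $\mathrm{UT}_2^{(-)}$ in $Y$; and finally use evaluations in $\mathrm{UT}_3^{(-)}$ to establish $\mathbb{K}$-linear independence of these elements modulo $\mathrm{Id}_G(\mathrm{UT}_3^{(-)},\Gamma(g,g^{-1}))$. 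The new feature compared with \Cref{universalbaseidentidades} is that the variables of trivial degree no longer commute; they satisfy $\mathrm{UT}_2^{(-)}$-identities through (v), so the reorganization of monomials relies on Jacobi together with (ii)--(iv) rather than on a simple reordering.

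For the spanning step, fix a left-normed Lie monomial $m$ and split cases by the numbers $a$ and $b$ of variables of degree $g$ and $g^{-1}$ respectively (using (i) to discard every other non-trivial degree). If $a+b=0$, identity (v) lands $m$ in the $\mathrm{UT}_2^{(-)}$-basis in $Y$. If $a+b=1$, Jacobi together with (iv) allow pushing the unique non-trivial variable to the leftmost position and give $[x_l,y_1,y_2]\equiv[x_l,y_2,y_1]$ modulo $T$ (since $[x_l,[y_1,y_2]]=-[y_1,y_2,x_l]=0$), producing form \eqref{gensetaluniversalinfinito1}. If $a+b=2$ with both non-trivial variables of the same degree $l$, then the left-normed prefix up to the second $l$-degree variable already has degree $l$, so bracketing it with that variable is zero by the $T_G$-closure of (ii), whence $m\equiv 0$ modulo $T$. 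If $a=b=1$, iterated applications of the Jacobi rewrite $[[U,V],W]=[U,[V,W]]-[V,[U,W]]$ split $m$ into sums of commutators matching the pattern $[[x_g,y's],[x_{g^{-1}},y's]]$. If $a+b\ge 3$, two non-trivial variables of the same degree necessarily appear, and combining the $T_G$-closure of (ii) with that of (iii) -- the latter kills every three-letter sub-bracket $[f_l,f_h,f'_l]$ with $f_l,f'_l$ of degree $l\in\{g,g^{-1}\}$ and $f_h$ of the opposite non-trivial degree -- together with an induction on the distance between such a pair reduces $m$ to zero. For finite $\mathbb{K}$ of order $q$, identity (vi) bounds each exponent by $q-1$.

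For linear independence, the three families lie in disjoint multigraded components of $\mathcal{L}(X^G)$, so each is treated separately. For \eqref{gensetaluniversalinfinito1} with $l=g$, evaluate $x_g\mapsto e_{12}$ and $y_i\mapsto\xi_{i,1}e_{11}+\xi_{i,2}e_{22}$ (which is permitted by \Cref{evaluacionesUT}); each basis element becomes $\bigl(\prod_\ell\lambda_\ell^{p_\ell}\bigr)e_{12}$ with $\lambda_\ell=\xi_{\ell,2}-\xi_{\ell,1}$, and the monomials $\prod_\ell\lambda_\ell^{p_\ell}$ with $0\le p_\ell<|\mathbb{K}|$ are linearly independent as functions on $\mathbb{K}^r$. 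The case $l=g^{-1}$ is symmetric via $x_{g^{-1}}\mapsto e_{23}$. For \eqref{gensetaluniversalinfinito2}, the simultaneous evaluation $x_g\mapsto e_{12}$, $x_{g^{-1}}\mapsto e_{23}$, with the same assignment for $y_i$, yields $\bigl(\prod_\ell\lambda_\ell^{p_\ell}\mu_\ell^{q_\ell}\bigr)e_{13}$ where $\mu_\ell=-\xi_{\ell,2}$; since $(\xi_{\ell,1},\xi_{\ell,2})\mapsto(\lambda_\ell,\mu_\ell)$ is a linear isomorphism of $\mathbb{K}^2$, linear independence follows. The $\mathrm{UT}_2^{(-)}$-basis part inherits linear independence from the relatively free algebra of $\mathrm{UT}_2^{(-)}$, which embeds into the identity component of $(\mathrm{UT}_3^{(-)},\Gamma(g,g^{-1}))$ as noted before the theorem.

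The main obstacle is the reduction when $a+b\ge 3$: naive rewriting using only (ii) and Jacobi can be circular -- for instance $[A_1,y,B,A_2]\equiv[A_1,B,y,A_2]$ modulo (iii) does not by itself force either side to be zero. The cleanest route is to recognize inside each problematic commutator a three-letter sub-bracket of shape $[f_l,f_h,f'_l]$ and invoke the $T_G$-closure of (iii) directly; for instance $[A_1,y,B,A_2]=[[A_1,y],B,A_2]$ with $f_l=[A_1,y]$, $f_h=B$, $f'_l=A_2$ vanishes by (iii), and a short induction on $a+b$ then closes the argument.
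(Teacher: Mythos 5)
Your proposal is correct and follows essentially the same route as the paper: the same case analysis on the number of non-trivial-degree variables in a left-normed monomial (using (iv) and Jacobi for one, (ii)/(iv) and Jacobi for two, (ii)/(iii) for three or more, and (vi) to bound exponents over a finite field), followed by linear independence via evaluations sending $x_g\mapsto e_{12}$, $x_{g^{-1}}\mapsto e_{23}$ and the $y_i$ to generic diagonal matrices (the paper uses $y_i\mapsto-\xi_ie_{11}+\xi_i'e_{33}$, which is only a cosmetic variant of your choice). Your extra care in the $a+b\ge 3$ case --- locating a sub-bracket $[f_l,f_h,f'_l]$ and inducting --- merely fills in detail the paper leaves implicit.
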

 
\begin{proof}
We shall work modulo the $T_G$-ideal of polynomial identities $T$, generated by the polynomial identities of the statement. First, consider a monomial 
    \[m=[x_{i_1,g_1},\ldots, x_{i_m, g_m}],\]
and let $s(m)=|\{j\mid g_j\neq 1\}|$. If $s(m)=0$, then $m\in\mathcal{L}(Y)$, so we are in the situation of a basis of the relatively free algebra of $\mathrm{UT}_2^{(-)}$.

Now, if $s(m)=1$, using the identity \eqref{almostuniversal4} and the Jacobi identity, we obtain
\[m=[x_{l}, y_{i_1},\ldots, y_{i_m}],\]
where  $l\in\{g,g^{-1}\}$ and $i_1\leq\cdots\leq i_k$. If $s(m)=2$, by repeated application of Jacobi identity
and using the identities \eqref{almostuniversal2} and \eqref{almostuniversal4}, we get
\[m=\sum\limits\left[[x_{g}, y_{i_1},\ldots, y_{i_m}],[x_{g^{-1}}, y_{j_1},\ldots, y_{j_s}]\right],\]
where $i_1\leq\cdots\leq i_m$ and $j_1\leq\cdots\leq j_m$. 

If $s(m)\geq 3$, it follows from identities \eqref{almostuniversal2} and \eqref{almostuniversal3} that $m\in T$. If the base field is finite, having $q$ elements, then identity \eqref{almostuniversal6} gives the upper bound $q_\ell,p_\ell<q$.

Now, let us prove that the given set of polynomials is linearly independent, modulo $\mathrm{Id}(\mathrm{UT}_3^{(-)},\Gamma(g,g^{-1}))$. Let
\[f=\sum\alpha_h h \in \mathrm{Id}_G (\mathrm{UT}_3^{(-)},\Gamma(g,g^{-1})),\]
where $\alpha_h\in\mathbb{K}$ and the sum runs over the set of polynomials of the statement. We may assume that the same set of variables appears in all the summands $h$. That is, without loss of generality, we can consider that either $f\in\mathcal{L}(Y)$, or
\[f=\sum\limits_{s}\alpha_s [x_l, y_{1}^{(p_{1,s})},\ldots, y_{r}^{(p_{r,s})}] \]
where $l\in \{g, g^{-1}\}$, $r \ge0$, $0\le p_{\ell, s}<|\mathbb{K}|$, for each $\ell\in\{1,2,\ldots,r\}$, or 
\[f=\sum\limits_{s=1}^t \alpha_{s}\left[[x_{g},y_1^{(p_{1,s})},\ldots,y_r^{(p_{r,s})}],[x_{g^{-1}},y_1^{(q_{1,s})},\ldots,y_r^{(q_{r,s})}]\right],\]
where $r\ge0$ and $0\leq p_{\ell, s},q_{\ell, s}<|\mathbb{K}|$, $1\leq \ell\leq r$.

The first case is known (see \cite{bahturin} when the base field is infinite, and \cite{GRivab} when the field is finite). For the second and third, we consider the evaluation $x_g\mapsto e_{12}$, $x_{g^{-1}}\mapsto e_{23}$ and $y_i\mapsto -\xi_ie_{11}+\xi_i'e_{33}$. Then, the evaluation gives:
$$
    [x_l,y_1^{(p_{1,s})},\ldots,y_r^{(p_{r,s})}]\mapsto\prod_{i=1}^r\xi_i^{p_{i,s}}e_{12},
$$
$$
    [[x_{g},y_1^{(p_{1,s})},\ldots,y_r^{(p_{r,s})}],[x_{g^{-1}},y_1^{(q_{1,s})},\ldots,y_r^{(q_{r,s})}]]\mapsto\left(\prod_{i=1}^r\xi_i^{p_{i,s}}\right)\left(\prod_{i=1}^r\xi_i^{\prime q_{i,s}}\right)e_{13}.
$$
Since the set of polynomials appearing on the right hand side is linearly independent, we obtain that the given set of polynomials is linearly independent modulo $\mathrm{Id}(\mathrm{UT}_3^{(-)},\Gamma(g,g^{-1}))$.
\end{proof}

The next step is to study the Specht property for the variety of graded Lie algebras generated by $\mathrm{UT}_3^{(-)}$ endowed with the Almost Universal grading when the base field $\mathbb{K}$ is infinite.

Using the notation of \Cref{universalgradingsection}, for each commutator $f=[x_l, y_{1}^{(k_1)},\ldots, y_{n}^{(k_n)}]$, where $l\in\{g, g^{-1}\}$, we associate the following finite sequence of non-negative integers
\begin{equation}
S_f= (k_1,\ldots, k_n).
\end{equation}
\begin{lemma}
 Let $(D(\mathbb{N}_0), \preceq)$ be the partially well-ordered set from \Cref{Sequenceorder}.  Consider the following commutators
\[f(x_1, y_1,\ldots, y_n)= [x_l, y_{1}^{(k_1)},\ldots, y_{n}^{(k_n)}],\]
\[h(x_1, y_1,\ldots, y_m)= [x_l, y_{1}^{(k'_1)},\ldots, y_{m}^{(k'_m)}],\]
where $l\in\{g,g^{-1}\}$ and $k_1$, \dots, $k_n$, $k'_1$, \dots, $k'_m\geq 0$. If $S_f\preceq S_h$, then $h\in \langle f\rangle^{T_G}+ \mathrm{Id}_G(\mathrm{UT}_3^{(-)},\Gamma(g,g^{-1}))$.
\end{lemma}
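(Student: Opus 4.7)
My plan is to mimic verbatim the proof of \Cref{oderSfSg} from the universal grading case, since the situation is structurally identical: we have a single variable $x_l$ of non-trivial degree, surrounded on the right by a sorted sequence of trivial-degree variables, and we want to transform a shorter commutator into a longer one by inserting extra $y$'s.

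First, I would use $S_f\preceq S_h$ to extract an order-preserving injection $\psi:\{1,\ldots,n\}\to\{1,\ldots,m\}$, write $i_j=\psi(j)$, and let $\{j_1,\ldots,j_{m-n}\}=\{1,\ldots,m\}\setminus\{i_1,\ldots,i_n\}$, with $k_j\le k'_{i_j}$ for every $j$. Relabeling variables, $\tilde f=[x_l,y_{i_1}^{(k_1)},\ldots,y_{i_n}^{(k_n)}]\in\langle f\rangle^{T_G}$. Then I would apply the $T_G$-endomorphism
$$
x_l\longmapsto [x_l,y_{i_1}^{(k'_{i_1}-k_1)},\ldots,y_{i_n}^{(k'_{i_n}-k_n)},y_{j_1}^{(k'_{j_1})},\ldots,y_{j_{m-n}}^{(k'_{j_{m-n}})}],
$$
which preserves the $G$-degree $l$, and obtain a polynomial $\overline f\in\langle f\rangle^{T_G}$ whose multidegree in each $y_i$ matches that of $h$.

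The final step is to bring $\overline f$ into the canonical form \eqref{gensetaluniversalinfinito1} and check it equals $h$ modulo $\mathrm{Id}_G(\mathrm{UT}_3^{(-)},\Gamma(g,g^{-1}))$. This reduces to showing that inside any commutator $[x_l,y_{t_1},\ldots,y_{t_s}]$ the $y$'s can be freely permuted modulo the T-ideal. This is where the identities (iv) of \Cref{almostuniversalbaseidentidades} come into play: by the Jacobi identity,
$$
[x_l,y_a,y_b]-[x_l,y_b,y_a]=[x_l,[y_a,y_b]]=-[[y_a,y_b],x_l],
$$
and $[y_1,y_2,x_l]$ is an identity, so adjacent trivial-degree variables commute inside such a commutator. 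Iterating this swap, $\overline f$ rewrites as $[x_l,y_1^{(k'_1)},\ldots,y_m^{(k'_m)}]=h$, up to elements of the $T_G$-ideal of identities, which proves the claim.

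I expect no real obstacle: the almost universal grading differs from the universal one only in that the degrees $g$ and $g^{-1}$ may collide with products, and in the present lemma only one non-trivial variable $x_l$ appears, so the extra identities \eqref{almostuniversal2}, \eqref{almostuniversal3} and \eqref{meatabelian} do not interfere. The proof is essentially a notational transcription of \Cref{oderSfSg}, with the single verification that the $y$-sorting step is available in this variety, as shown above.
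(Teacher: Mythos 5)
Your proposal is correct and follows essentially the same route as the paper, which simply declares this lemma analogous to \Cref{oderSfSg} (relabel via the order-preserving injection, substitute $x_l\mapsto[x_l,y_{i_1}^{(k'_{i_1}-k_1)},\ldots]$, then reorder the $y$'s). Your explicit check that the trivial-degree variables can be sorted using the Jacobi identity together with the identity $[y_1,y_2,x_l]$ is precisely the step the paper leaves implicit, and it is carried out correctly.
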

\begin{proof}
The proof is analogous to the proof of \Cref{oderSfSg}.
\end{proof}

\begin{lemma}\label{finitoalmostuniversalcommutadorsimples}
Let $l\in\{g, g^{-1}\}$ and $C$ be a subset of the set 
\[\{[x_l, y_1^{(k_1)},\ldots, y_n^{(k_n)}]\mid n\geq 0, k_1,\ldots, k_n\geq 0\}.\]  
Then, there exist $f_1,\ldots, f_s\in C$ such that
\[ \langle f_1,\ldots, f_s\rangle ^{T_G}+ \mathrm{Id}_G(\mathrm{UT}_3^{(-)},\Gamma(e_1,e_2))= \langle C\rangle^{T_G}+\mathrm{Id}_G(\mathrm{UT}_3^{(-)},\Gamma(g, g^{-1})).\]
\end{lemma}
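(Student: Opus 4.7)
The plan is to mimic the proof of \Cref{finitecomutatoruniversal} verbatim, only replacing the reference to \Cref{oderSfSg} with its almost universal analogue (the lemma immediately preceding this one). The key observation is that each element of $C$ is completely determined, modulo $\mathrm{Id}_G(\mathrm{UT}_3^{(-)},\Gamma(g,g^{-1}))$, by the finite sequence of exponents $S_f = (k_1,\ldots,k_n) \in D(\mathbb{N}_0)$, so the problem is transferred to the combinatorics of $D(\mathbb{N}_0)$ with the order $\preceq$.

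First, I would form the set $S_C = \{S_f \mid f \in C\} \subseteq D(\mathbb{N}_0)$. By \Cref{Sequenceorder}, $(D(\mathbb{N}_0),\preceq)$ is partially well-ordered, so $S_C$ contains only finitely many minimal elements, say $S_{f_1}, \ldots, S_{f_s}$ with $f_i \in C$. Then, for any $f \in C$, there exists $i \in \{1,\ldots,s\}$ with $S_{f_i} \preceq S_f$.

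Next, I would apply the preceding lemma (the almost-universal analogue of \Cref{oderSfSg}) to conclude that $f \in \langle f_i \rangle^{T_G} + \mathrm{Id}_G(\mathrm{UT}_3^{(-)},\Gamma(g,g^{-1}))$ for each such $f$. Taking the union over all $f \in C$ gives
\[
\langle C \rangle^{T_G} + \mathrm{Id}_G(\mathrm{UT}_3^{(-)},\Gamma(g,g^{-1})) \subseteq \langle f_1,\ldots,f_s \rangle^{T_G} + \mathrm{Id}_G(\mathrm{UT}_3^{(-)},\Gamma(g,g^{-1})).
\]
The reverse inclusion is immediate since $f_1,\ldots,f_s \in C$. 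This yields the desired equality.

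There is no real obstacle here; the argument is a direct transcription of the proof of \Cref{finitecomutatoruniversal}. The only thing to note is an apparent typo in the statement: the left-hand side should read $\mathrm{Id}_G(\mathrm{UT}_3^{(-)},\Gamma(g,g^{-1}))$ rather than $\Gamma(e_1,e_2)$, since we are working throughout with the almost universal grading.
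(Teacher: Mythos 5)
Your proposal is correct and follows essentially the same route as the paper, which itself only says the proof is analogous to that of \Cref{finitecomutatoruniversal}: extract the finitely many minimal elements of $S_C$ in the partially well-ordered set $(D(\mathbb{N}_0),\preceq)$ and apply the almost-universal analogue of \Cref{oderSfSg}. Your observation that the left-hand side of the statement should read $\mathrm{Id}_G(\mathrm{UT}_3^{(-)},\Gamma(g,g^{-1}))$ rather than $\Gamma(e_1,e_2)$ is also correct; that is a typo in the paper.
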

\begin{proof}
The proof is similar to the proof of \Cref{finitecomutatoruniversal}.
\end{proof}
For each  polynomial of the kind $f=[[x_{g},y_1^{(k_1)}, \ldots, y_n^{(k_n)}], [x_{g^{-1}}, y_1^{(l_1)}, \ldots, y_n^{(l_n)}]]$, we associate the following finite sequence of elements in $\mathbb{N}_0^2$:
\[V_f= ( (k_1,l_1),\ldots,(k_n,l_n)).\]
\begin{lemma}\label{oderseqparesalmostuniversal}
 Let $(D(\mathbb{N}_0^2), \preceq)$ be the partially well-ordered set from \Cref{Sequenceorder}.  Consider the following commutators 
\[f(x_{g}, x_{g^{-1}}, y_1,\ldots, y_n)= [[x_{g}, y_{1}^{(k_1)},\ldots, y_{n}^{(k_n)}], [x_{g^{-1}}, y_1^{(l_1)},\ldots, y_n^{(l_n)}]],\]
\[h(x_{g}, x_{g^{-1}},y_1,\ldots, y_m)= [[x_{g}, y_{1}^{(k'_1)},\ldots, y_{m}^{(k'_m)}], [x_{g^{-1}}, y_1^{(l'_1)},\ldots, y_m^{(l'_m)}]],\]
where $0\leq k_1$,\dots, $k_n$, $l_1$, \dots, $l_n$, $k'_1$,\dots, $k'_m$, $l'_1$,\dots, $l'_m$. If $V_f\preceq V_h$, then  $h\in \langle f\rangle^{T_G}+ \mathrm{Id}_G(\mathrm{UT}_3^{(-)},\Gamma(g,g^{-1}))$
\end{lemma}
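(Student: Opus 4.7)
The plan is to mimic the proof of \Cref{oderseqfe1e2} almost verbatim, replacing the outer single commutator by the outer bracket of the two inner commutators. Since $V_f\preceq V_h$ in $(D(\mathbb{N}_0^2),\preceq)$, there is an order-preserving injection producing a subsequence $((k'_{i_1},l'_{i_1}),\ldots,(k'_{i_n},l'_{i_n}))$ of $V_h$ such that $(k_j,l_j)\le_2(k'_{i_j},l'_{i_j})$ for each $j\in\{1,\ldots,n\}$. Relabeling the trivially graded variables, we first obtain
\[\tilde f=f(x_g,x_{g^{-1}},y_{i_1},\ldots,y_{i_n})=\bigl[[x_g,y_{i_1}^{(k_1)},\ldots,y_{i_n}^{(k_n)}],[x_{g^{-1}},y_{i_1}^{(l_1)},\ldots,y_{i_n}^{(l_n)}]\bigr]\in\langle f\rangle^{T_G}.\]

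Next, let $\{j_1,\ldots,j_{m-n}\}=\{1,\ldots,m\}\setminus\{i_1,\ldots,i_n\}$ and perform the substitutions
\[x_g\mapsto[x_g,y_{i_1}^{(k'_{i_1}-k_1)},\ldots,y_{i_n}^{(k'_{i_n}-k_n)},y_{j_1}^{(k'_{j_1})},\ldots,y_{j_{m-n}}^{(k'_{j_{m-n}})}],\]
\[x_{g^{-1}}\mapsto[x_{g^{-1}},y_{i_1}^{(l'_{i_1}-l_1)},\ldots,y_{i_n}^{(l'_{i_n}-l_n)},y_{j_1}^{(l'_{j_1})},\ldots,y_{j_{m-n}}^{(l'_{j_{m-n}})}]\]
inside $\tilde f$, producing a polynomial $\bar f\in\langle f\rangle^{T_G}$. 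Each inner commutator now looks like $[[x_l,u_1,\ldots,u_a],u'_1,\ldots,u'_b]$ with all $u$'s of trivial degree, and by identity (iv) of \Cref{almostuniversalbaseidentidades} (namely $[y_1,y_2,x_l]=0$, equivalent via Jacobi to $[x_l,y_1,y_2]=[x_l,y_2,y_1]$), the trivially graded entries inside a single left-normed bracket headed by $x_l$ can be reordered freely. This lets me collect and sort all the $y$-variables, turning each inner commutator into $[x_g,y_1^{(k'_1)},\ldots,y_m^{(k'_m)}]$ and $[x_{g^{-1}},y_1^{(l'_1)},\ldots,y_m^{(l'_m)}]$ respectively, which is exactly the form of the inner components of $h$. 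Hence $\bar f\equiv h\pmod{\mathrm{Id}_G(\mathrm{UT}_3^{(-)},\Gamma(g,g^{-1}))}$.

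The main obstacle is purely bookkeeping: one has to verify that after the substitutions, the resulting left-normed expression can indeed be rewritten in the prescribed order using only the permitted identities (i)–(vi) of \Cref{almostuniversalbaseidentidades}. In particular, identity (iv) handles the reordering of trivial-degree variables within each inner block, while identities (ii), (iii) and the Jacobi identity keep the outer bracket structure $[[x_g,\ldots],[x_{g^{-1}},\ldots]]$ intact, so no spurious terms outside of \eqref{gensetaluniversalinfinito2} are produced. Once this is checked, the conclusion $h\in\langle f\rangle^{T_G}+\mathrm{Id}_G(\mathrm{UT}_3^{(-)},\Gamma(g,g^{-1}))$ follows immediately.
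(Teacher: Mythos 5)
Your proposal is correct and follows essentially the same route as the paper: the same extraction of a subsequence from $V_f\preceq V_h$, the same substitutions $x_g\mapsto[x_g,y_{i_1}^{(k'_{i_1}-k_1)},\ldots]$ and $x_{g^{-1}}\mapsto[x_{g^{-1}},y_{i_1}^{(l'_{i_1}-l_1)},\ldots]$, and the same use of identity \eqref{almostuniversal4} to reorder the trivially graded variables inside each inner commutator so that $\overline{f}\equiv h$ modulo the graded identities. The only difference is that you spell out the bookkeeping of the reordering more explicitly than the paper does.
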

\begin{proof}
Since $V_f\preceq V_h$,  there exists a subsequence $((k'_{i_1}, l'_{i_1}), \ldots, (k'_{i_n},l'_{i_n}))$ of $V_h$ such that $(k_j,l_j) \le_2 (k'_{i_j},l'_{i_j})$ for every $ j \in \{1, \ldots,n\}$. Therefore $k_j \leq k'_{i_j}$ and $l_j\leq l'_{i_j}$  for every $j \in \{1,\ldots,n\}$. Then
    \[ \tilde{f}=f(x_{g}, x_{g^{-1}}, y_{i_1},\ldots, y_{i_n})=[[x_{g},y_{i_1}^{(k_1)}, \ldots, y_{i_n}^{(k_n)}], [x_{g^{-1}}, y_{i_1}^{(l_1)},\ldots, y_{i_n}^{(l_n)}]] \in \left< f \right>^{T_G}. \]
Evaluating as \[x_{g} \mapsto [x_{g},y_{i_1}^{(k'_{i_1}-k_1)},\ldots, y_{i_n}^{(k'_{i_n}-k_n)},y_{j_1}^{(k'_{j_1})}, \ldots, y_{j_{m-n}}^{(k'_{j_{m-n}})}],\]
\[x_{g^{-1}} \mapsto [x_{g^{-1}},y_{i_1}^{(l'_{i_1}-l_1)},\ldots, y_{i_n}^{(l'_{i_n}-l_n)},y_{j_1}^{(l'_{j_1})}, \ldots, y_{j_{m-n}}^{(l'_{j_{m-n}})}, ]\] 
on the polynomial $\tilde{f}$, where $\{j_{1}, \ldots, j_{m-n}\}=\{1,\ldots,m\}\setminus \{i_{1}, \ldots, i_{n}\}$, gives a new polynomial $\overline{f}\in \left< f \right>^{T_G}$. From the identity \eqref{almostuniversal4},
       \[ \overline{f}+ \mathrm{Id}_G(\mathrm{UT}_3^{(-)},\Gamma(g,g^{-1})) = h+ \mathrm{Id}_G(\mathrm{UT}_3^{(-)},\Gamma(g,g^{-1}))
       .\]
Therefore, $h\in \langle f\rangle^{T_G}+ \mathrm{Id}_G(\mathrm{UT}_3^{(-)},\Gamma(g,g^{-1}))$.  
\end{proof}

\begin{definition}\label{orderlinearmonomiosalmostuniversal}
Consider the following commutators of the same multidegree
\[f(x_{g},x_{g^{-1}}, y_1,\ldots, y_n)= [[x_{g}, y_{1}^{(k_1)},\ldots, y_{n}^{(k_n)}], [x_{g^{-1}}, y_1^{(l_1)},\ldots, y_n^{(l_n)}]],\]
\[h(x_{g}, x_{g^{-1}}, y_1,\ldots, y_n)= [[x_{g}, y_{1}^{(k'_1)},\ldots, y_{n}^{(k'_n)}], [x_{g^{-1}}, y_1^{(l'_1)},\ldots, y_n^{(l'_n)}]],\]
where $0\leq k_1,\ldots, k_n,l_1,\ldots, l_n,k'_1,\ldots, k'_n,l'_1$. 
We say that $f<_{au}' h$ if either
\begin{itemize}
\item[(a)] $\sum\limits_{i=1}^n k_i< \sum\limits_{i=1}^n k'_i$, or,
\item[(b)] $\sum\limits_{i=1}^n k_i= \sum\limits_{i=1}^n k'_i$ and $(k_1,\ldots, k_n,)<_{l} (k'_1,\ldots, k'_n,)$.
\end{itemize}
As before, $<_l$ is the lexicographic order on $\mathbb{N}_0^n$.
\end{definition}

\begin{definition}\label{monomiolideralmostuniversal}
We say that $f$ is a polynomial of type $(g, g^{-1})$ if $f$ is multihomogeneous and
\[f=\sum\limits_{s=1}^t \alpha_{s}[[x_{g},y_1^{(p_{1,s})},\ldots,y_r^{(p_{r,s})}],[x_{g^{-1}},y_1^{(q_{1,s})},\ldots,y_r^{(q_{r,s})}]],\] 
where $\alpha_s\in\mathbb{K}\setminus\{0\}$, for $1\leq s\leq t$. The leading monomial of $f$ with respect to $<_{au}'$ is
\[\nu(f)=\max_{<_{au}'}\{ [[x_{g},y_1^{(p_{1,s})},\ldots,y_r^{(p_{r,s})}],[x_{g^{-1}},y_1^{(q_{1,s})},\ldots,y_r^{(q_{r,s})}]]\mid 1\leq s\leq t\}.\]
\end{definition}

\begin{lemma}\label{Consequencealmostuniversal}
Let $f$, $h$ be polynomials of type $(g, g^{-1})$ and suppose that $V_{\nu(f)}\preceq V_{\nu (h)}$. Then, there exists $h'\in\langle f\rangle^{T_G}+\mathrm{Id}_G(\mathrm{UT}_3^{(-)},\Gamma(g,g^{-1}))$  such that $\nu (h)=\nu (h')$.
\end{lemma}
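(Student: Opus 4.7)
The plan is to follow the strategy of \Cref{Consequenceuniversal} almost verbatim, replacing the leading-monomial operator $\mathcal{M}$ by $\nu$ and working with polynomials of type $(g,g^{-1})$ in place of type $(e_1,e_2)$. First, I would extract from the construction inside the proof of \Cref{oderseqparesalmostuniversal} the homomorphism $\psi:\mathcal{L}(X^G)\to\mathcal{L}(X^G)$ that the inequality $V_{\nu(f)}\preceq V_{\nu(h)}$ produces: its action on the non-trivial variables is a substitution of the form
\begin{align*}
x_g &\mapsto [x_g,y_{i_1}^{(a_1)},\ldots,y_{i_n}^{(a_n)},y_{j_1}^{(b_1)},\ldots,y_{j_{m-n}}^{(b_{m-n})}],\\
x_{g^{-1}} &\mapsto [x_{g^{-1}},y_{i_1}^{(c_1)},\ldots,y_{i_n}^{(c_n)},y_{j_1}^{(d_1)},\ldots,y_{j_{m-n}}^{(d_{m-n})}],
\end{align*}
combined with a reindexing $y_t\mapsto y_{\sigma(t)}$ on the trivial-degree variables, chosen so that $\psi(\nu(f))\equiv\nu(h)$ modulo $\mathrm{Id}_G(\mathrm{UT}_3^{(-)},\Gamma(g,g^{-1}))$.

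Next, I would set $h':=\psi(f)\in\langle f\rangle^{T_G}$ and show that $\nu(h')=\nu(h)$. For this I need the natural analogue of \Cref{compativel} for the order $<_{au}'$: if $m<_{au}' m'$ are two monomials of type $(g,g^{-1})$ of the same multidegree, then after rewriting $\psi(m)$ and $\psi(m')$ in the canonical form \eqref{gensetaluniversalinfinito2}, one still has $\psi(m)<_{au}' \psi(m')$. This reduces to a direct computation: the substitution adds the sequence $(a_1,\ldots,a_n,b_1,\ldots,b_{m-n})$ componentwise to the $(k_i)$-part of each monomial of type $(g,g^{-1})$, and the sequence $(c_1,\ldots,c_n,d_1,\ldots,d_{m-n})$ to the $(l_i)$-part; since the same shifts are applied to both $m$ and $m'$, neither the total-$y$-degree criterion (a) nor the lexicographic tiebreaker (b) of \Cref{orderlinearmonomiosalmostuniversal} is disturbed. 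Combined with the first step, this yields $\nu(h')=\psi(\nu(f))\equiv\nu(h)$ modulo identities, as required.

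The main obstacle I anticipate is carrying out the rewriting into canonical form rigorously: the expressions $\psi(m)$ are not \emph{a priori} of the form \eqref{gensetaluniversalinfinito2}, because the trivial-degree variables are intermixed inside the inner brackets of the substituted generators, and then further multiplied on the outside by the original $y$'s. Rearranging them into a single sorted block of $y$-variables on the outside of each inner bracket requires the Jacobi identity together with the identities $[y_1,y_2,x_l]=0$, $[x_{1,l},x_{2,l}]=0$, and $[x_{1,l},x_{1,h},x_{2,l}]=0$ from \Cref{almostuniversalbaseidentidades}. I would verify that this rewriting produces exactly the exponent vectors predicted by componentwise addition and is therefore monotone with respect to $<_{au}'$; once this verification is in place, the proof closes exactly as in \Cref{Consequenceuniversal}.
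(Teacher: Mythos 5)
Your proposal is correct and follows exactly the route the paper takes: the paper's own proof of this lemma is simply a reference to the argument of \Cref{Consequenceuniversal}, i.e.\ take $h'=\psi(f)$ for the substitution produced in \Cref{oderseqparesalmostuniversal} and invoke an analogue of \Cref{compativel} for $<_{au}'$, which is precisely what you do. Your extra care about rewriting $\psi(m)$ into the canonical form \eqref{gensetaluniversalinfinito2} via the Jacobi identity and $[y_1,y_2,x_l]=0$ (which makes the $y$'s commute inside each inner bracket, so the exponent vectors add componentwise) is a detail the paper leaves implicit, and it is handled correctly.
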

\begin{proof}
The proof is similar to the proof of \Cref{Consequenceuniversal}.
\end{proof}

\begin{lemma}\label{finitog1g-1}
Let $J$ be a $T_G$-ideal such that $\mathrm{Id}_G(\mathrm{UT}_3^{(-)},\Gamma(g,g^{-1}))\subseteq J$. Consider the set
\[\mathcal{A}^{(g,g^{-1})}=\{\textrm{polynomials of type $(g,g^{-1})$}\}\cap J.\]
Then, there exists a finite subset $\widetilde{A}\subseteq \mathcal{A}^{g,g^{-1}}$ such that
\[\mathcal{A}^{(g,g^{-1})}\subseteq \langle \widetilde{A}\rangle^{T_G}+\mathrm{Id}_G(\mathrm{UT}_3^{(-)},\Gamma(g,g^{-1})).\]
\end{lemma}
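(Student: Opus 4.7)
The plan is to replicate the strategy used in the proof of \Cref{finitoe1e2}, with the role of $\mathcal{M}$ played by $\nu$ and the role of $<'$ played by $<_{au}'$. First I would introduce
\[
\mathcal{V}^{(g,g^{-1})} = \{V_{\nu(f)} \mid f \in \mathcal{A}^{(g,g^{-1})}\} \subseteq D(\mathbb{N}_0^2).
\]
By \Cref{Sequenceorder} this set has finitely many minimal elements, so I can pick $f_1,\ldots,f_t \in \mathcal{A}^{(g,g^{-1})}$ with $V_{\nu(f_1)},\ldots,V_{\nu(f_t)}$ realizing those minima, and set $\widetilde{A} = \{f_1,\ldots,f_t\}$. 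Combining \Cref{oderseqparesalmostuniversal} with \Cref{Consequencealmostuniversal}, for every $f \in \mathcal{A}^{(g,g^{-1})}$ there exist $f_i \in \widetilde{A}$ and $h \in \langle f_i\rangle^{T_G} + \mathrm{Id}_G(\mathrm{UT}_3^{(-)},\Gamma(g,g^{-1}))$ with $\nu(h) = \nu(f)$.

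To conclude that $\mathcal{A}^{(g,g^{-1})} \subseteq \langle \widetilde{A}\rangle^{T_G} + \mathrm{Id}_G(\mathrm{UT}_3^{(-)},\Gamma(g,g^{-1}))$, I would argue by contradiction, exactly in the style of \Cref{finitoe1e2}. Suppose some $f \in \mathcal{A}^{(g,g^{-1})}$ lies outside this ideal, fix its multidegree $(d_1,\ldots,d_n)$ in the variables $y_1,\ldots,y_n$, and among all counterexamples of this multidegree choose $f$ so that $\nu(f)$ is minimal with respect to $<_{au}'$; such a minimum exists because, in a fixed multidegree, only finitely many monomials of type $(g,g^{-1})$ are available as candidate leading terms. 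Take $f_i$ with $V_{\nu(f_i)} \preceq V_{\nu(f)}$ and use \Cref{Consequencealmostuniversal} to produce $h$ with $\nu(h) = \nu(f)$. After rescaling so that the coefficients of $\nu(h)$ in $h$ and of $\nu(f)$ in $f$ agree, set $\widehat{f} = h - f$. If $\widehat{f} = 0$ then $f = h \in \langle f_i\rangle^{T_G} + \mathrm{Id}_G(\mathrm{UT}_3^{(-)},\Gamma(g,g^{-1}))$, contradicting the choice of $f$; otherwise $\widehat{f}$ is still a polynomial of type $(g,g^{-1})$ of multidegree $(d_1,\ldots,d_n)$, still outside $\langle \widetilde{A}\rangle^{T_G} + \mathrm{Id}_G(\mathrm{UT}_3^{(-)},\Gamma(g,g^{-1}))$, but with $\nu(\widehat{f}) <_{au}' \nu(f)$, contradicting the minimality of $\nu(f)$.

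The subtle point, and the one I expect to be the main obstacle, is verifying that the reduction step really preserves the type. More precisely, one has to check that the polynomial $h$ produced from $f_i$ by the substitutions used in \Cref{Consequencealmostuniversal}, where the generator $x_{g^{\pm 1}}$ is replaced by a bracket $[x_{g^{\pm 1}}, y_{i_1}^{(a_1)},\ldots]$ inside an expression already of the bracket-of-two-brackets shape, is again, modulo $\mathrm{Id}_G(\mathrm{UT}_3^{(-)},\Gamma(g,g^{-1}))$, a $\mathbb{K}$-linear combination of monomials of the form $[[x_{g},y_1^{(p)},\ldots],[x_{g^{-1}},y_1^{(q)},\ldots]]$ of the same multidegree as $f$. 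This reduction to normal form uses precisely the identities (ii), (iii) and (iv) of \Cref{almostuniversalbaseidentidades}, which force any new commutator to collapse either to a polynomial of type $(g,g^{-1})$ or to zero. Once this normalization is established, together with the straightforward order-preserving analogue of \Cref{compativel} for $<_{au}'$, the argument transcribed above goes through verbatim.
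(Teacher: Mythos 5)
Your proposal is correct and follows exactly the route the paper takes: the paper's own proof of this lemma is simply the statement that it is analogous to \Cref{finitoe1e2}, using \Cref{Sequenceorder}, \Cref{oderseqparesalmostuniversal} and \Cref{Consequencealmostuniversal} in place of their universal-grading counterparts. Your explicit treatment of the minimal-leading-monomial induction and of the type-preservation of the reduction step is precisely what that analogy entails.
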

\begin{proof}
The proof is analogous to the proof of \Cref{finitoe1e2}.
\end{proof}
\begin{definition}\label{polynomialoftypetrivial}
A polynomial of trivial type is a multihomogeneous polynomial in $\mathcal{L}(Y)$.
\end{definition}
\begin{theorem}\label{specthalmostUniversal}
Let $G$ be an abelian group, $g\in G$ be such that $g^2\ne1$, $\mathbb{K}$ an infinite field and $J$ a $T_G$-ideal such that $\mathrm{Id}_G(\mathrm{UT}_3^{(-)}(\mathbb{K}),\Gamma(g,g^{-1}))\subseteq J$. Then, there exists a finite subset $S\subseteq\mathcal{L}(X^G)$ such that $J=\langle S\rangle^{T_G}+\mathrm{Id}_G(\mathrm{UT}_3^{(-)},\Gamma(g,g^{-1}))$.
\end{theorem}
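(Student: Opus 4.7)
The plan is to mirror the proof of \Cref{specthUniversal}, adapting it to the basis of the relatively free algebra from \Cref{almostuniversalbaseidentidades}. Since $\mathbb{K}$ is infinite, the $T_G$-ideal $J$ is generated by its multihomogeneous elements, and by \Cref{almostuniversalbaseidentidades} any multihomogeneous polynomial is, modulo $\mathrm{Id}_G(\mathrm{UT}_3^{(-)},\Gamma(g,g^{-1}))$, a linear combination of basis polynomials from exactly one of three classes (determined by the multidegree): polynomials of trivial type (in $\mathcal{L}(Y)$), commutators of the form $[x_l,y_1^{(k_1)},\ldots,y_n^{(k_n)}]$ with $l\in\{g,g^{-1}\}$, or polynomials of type $(g,g^{-1})$. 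Accordingly, set $\mathcal{D}=\mathcal{L}(Y)\cap J$, $\mathcal{B}^l=\{[x_l,y_1^{(k_1)},\ldots,y_n^{(k_n)}]\mid n\ge 0,\,k_i\ge 0\}\cap J$ for $l\in\{g,g^{-1}\}$, and let $\mathcal{A}^{(g,g^{-1})}$ be as in \Cref{finitog1g-1}. Then
\[
J=\langle \mathcal{D}\cup\mathcal{B}^g\cup\mathcal{B}^{g^{-1}}\cup\mathcal{A}^{(g,g^{-1})}\rangle^{T_G}+\mathrm{Id}_G(\mathrm{UT}_3^{(-)},\Gamma(g,g^{-1})).
\]

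Next, for each of these four sets I would exhibit a finite subset whose $T_G$-ideal, together with $\mathrm{Id}_G(\mathrm{UT}_3^{(-)},\Gamma(g,g^{-1}))$, contains the whole set. For $\mathcal{B}^g$ and $\mathcal{B}^{g^{-1}}$ this is exactly \Cref{finitoalmostuniversalcommutadorsimples}, and for $\mathcal{A}^{(g,g^{-1})}$ this is \Cref{finitog1g-1}, producing finite sets $\widetilde{B}^g$, $\widetilde{B}^{g^{-1}}$, and $\widetilde{A}$ respectively. The novel case, not present in the Universal grading, is $\mathcal{D}$. Observe that $\mathcal{D}$ is an ungraded $T$-ideal of the free Lie algebra $\mathcal{L}(Y)$: every endomorphism $\phi$ of $\mathcal{L}(Y)$ extends to a $G$-graded endomorphism of $\mathcal{L}(X^G)$ by fixing $X^G\setminus Y$, hence stabilises $J\cap\mathcal{L}(Y)=\mathcal{D}$. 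Moreover, \Cref{almostuniversalbaseidentidades} gives $\mathrm{Id}(\mathrm{UT}_2^{(-)})\subseteq \mathrm{Id}_G(\mathrm{UT}_3^{(-)},\Gamma(g,g^{-1}))\subseteq J$, so $\mathcal{D}\supseteq\mathrm{Id}(\mathrm{UT}_2^{(-)})$. Invoking the classical Specht property for $\mathrm{UT}_2^{(-)}$ over an infinite field (see \cite{bahturin,Siderov}), we obtain a finite $\widetilde{D}\subseteq\mathcal{D}$ with $\mathcal{D}=\langle\widetilde{D}\rangle^T$, and since ungraded $T$-substitutions are particular instances of $T_G$-substitutions of trivial-degree variables, we conclude that $\mathcal{D}\subseteq\langle\widetilde{D}\rangle^{T_G}+\mathrm{Id}_G(\mathrm{UT}_3^{(-)},\Gamma(g,g^{-1}))$.

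Combining the four finite subsets yields the theorem with $S=\widetilde{D}\cup\widetilde{B}^g\cup\widetilde{B}^{g^{-1}}\cup\widetilde{A}$. The only non-routine ingredient compared with \Cref{specthUniversal} is the handling of the trivial-type class $\mathcal{D}$, which is reduced to the classical Specht property for $\mathrm{UT}_2^{(-)}$; the remaining three classes are disposed of by an immediate application of the finiteness lemmas already established in this subsection.
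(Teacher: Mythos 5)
Your proposal is correct and follows essentially the same route as the paper: the same four-way decomposition of $J$ into trivial-type polynomials, the two families $\mathcal{B}^g,\mathcal{B}^{g^{-1}}$, and the polynomials of type $(g,g^{-1})$, handled respectively by the finite basis property of the metabelian variety (the paper cites \cite[Theorem 5.16]{bahturin}, you cite the Specht property of $\mathrm{UT}_2^{(-)}$ over an infinite field) and by \Cref{finitoalmostuniversalcommutadorsimples,finitog1g-1}. Your extra justification that $J\cap\mathcal{L}(Y)$ is an ungraded $T$-ideal containing $\mathrm{Id}(\mathrm{UT}_2^{(-)})\cap\mathcal{L}(Y)$ merely makes explicit a step the paper leaves implicit.
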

\begin{proof}
Since $\mathbb{K}$ is an infinite field, $J$ is generated as $T_G$-ideal by its multihomogeneous polynomials. Thus, consider the sets
\[\mathcal{Y}=\{\textrm{polynomials of  trivial type }\}\cap J,\]
\[\mathcal{B}^{l}=\{[x_g, y_{1}^{(k_1)},\ldots, y_{n}^{(k_n)}]\mid n\geq 1, k_i\geq 0\}\cap J,\]
\[\mathcal{A}^{(g,g^{-1})}=\{\textrm{polynomials of type $(g,g^{-1})$}\}\cap J,\]
for each $l\in\{g, g^{-1}\}$. It follows from \Cref{almostuniversalbaseidentidades} that
\[J=\langle \mathcal{B}^{g}\cup\mathcal{B}^{g^{-1}}\cup\mathcal{Y}\cup\mathcal{A}^{(g,g^{-1})}\rangle^{T_G}+ \mathrm{Id}_G(\mathrm{UT}_3^{(-)},\Gamma(g,g^{-1})).\]
From \cite[Theorem 5.16]{bahturin} and \Cref{finitoalmostuniversalcommutadorsimples,finitog1g-1}, there exist finite subsets 
$\widetilde{Y}\subseteq \mathcal{Y}$, $\widetilde{B}^{l}\subseteq\mathcal{B}^{l}$  and $\widetilde{A}\subseteq \mathcal{A}^{(g,g^{-1})}$ such that
\[\mathcal{Y}\subseteq \langle \widetilde{Y}\rangle^{T_G}+\mathrm{Id}_G(\mathrm{UT}_3^{(-)},\Gamma(g,g^{-1})),\]
\[\mathcal{B}^{l}\subseteq \langle \widetilde{B}^{l}\rangle^{T_G}+\mathrm{Id}_G(\mathrm{UT}_3^{(-)},\Gamma(g,g^{-1})),\]
\[\mathcal{A}^{(g,g^{-1})}\subseteq \langle \widetilde{A}\rangle^{T_G}+\mathrm{Id}_G(\mathrm{UT}_3^{(-)},\Gamma(g,g^{-1})),\]
for each $l\in\{g,g^{-1}\}$. Hence, \[J= \langle \widetilde{Y}\cup \widetilde{B}^{g}\cup\widetilde{B}^{g^{-1}}\cup\widetilde{A}\rangle^{T_G}+ \mathrm{Id}_G(\mathrm{UT}_3^{(-)},\Gamma(g,g^{-1})).\]
The result is proved.
\end{proof}

\subsection{Almost Canonical Grading}
We denote $\mathbb{Z}_2=\langle 1\mid1+1=0 \rangle$ the cyclic group of order $2$ with additive notation. In this section, we rename the graded variables as $z_i=x_{1,i}$, for each $i\in\mathbb{N}$. We consider the almost canonical grading $\Gamma(1,1)$ on $\mathrm{UT}_3^{(-)}$.  It is straightforward to check that the following polynomials are $\mathbb{Z}_2$-graded identities for $(\mathrm{UT}_3^{(-)}, \Gamma(1,1))$:
\begin{equation}\label{almostcanonical1}
    [z_1, z_2, z_3]=0,
\end{equation}
\begin{equation}\label{almoscanonical2}
    [y_1,y_2, z_3]=0,
\end{equation}
\begin{equation}\label{meatabelianalmostcanonical}
    [[y_1,y_2],[y_3,y_4]]=0.
\end{equation}
In addition, if $\mathbb{K}$ is finite with $q$ elements, then the following polynomials are also graded polynomial identities:
\begin{equation}\label{almoscanonical3}
[z,y^{(q)}]-[z,y]=0,
\end{equation}
\begin{equation}\label{almostcanonical4}
    [y_1, y_2, y_3^{(q)}]-[y_1, y_2, y_3]=0,
\end{equation}
\begin{equation}\label{almoscanonical5}
    [y_2, y_1, y_2^{(q-1)}]+ [y_2,y_1]-[y_2,y_1^{(q)}]-[y_2,y_1, y_2^{(q-1)}].
\end{equation}
As before, the last 2 polynomial identities together with \eqref{meatabelianalmostcanonical} are polynomial identities for $\mathrm{UT}_2^{(-)}$. The characterization of $\mathrm{Id}_{\mathbb{Z}_2}(\mathrm{UT}_3^{(-)},\Gamma(1,1))$ is as follows.
\begin{theorem}\label{almoscanonicallbaseidentidades}
Let $\mathbb{K}$ be an arbitrary field and consider the Almost Canonical $\mathbb{Z}_2$-grading $\Gamma(1,1)$ on $\mathrm{UT}_3^{(-)}(\mathbb{K})$. Then, $\mathrm{Id}_{\mathbb{Z}_2}(\mathrm{UT}_3^{(-)},\Gamma(1,1))$ is generated by:
\renewcommand{\labelenumi}{(\roman{enumi})}
 \begin{enumerate}
\item  $[z_1, z_2, z_3]$,
\item  $[y_1,y_2, z]$,
 \item $\mathrm{Id}(\mathrm{UT}_2^{(-)})\cap\mathcal{L}(Y)$.
\end{enumerate}
In addition, if $\mathbb{K}$ is finite containing $q$ elements, then we include:
\begin{enumerate}
\setcounter{enumi}{3}
\item $[z,y^{(q)}]-[z,y]$.
\end{enumerate}
A basis of the relatively free algebra $\mathcal{L}(Z\cup Y)_{\mathbb{Z}_2}$, modulo $\mathrm{Id}_{\mathbb{Z}_2}(\mathrm{UT}_3^{(-)},\Gamma(1,1))$, is given by  a basis of the relatively free algebra of $\mathrm{UT}_2(\mathbb{K})^{(-)}$, in the variables $Y$,  and  the following polynomials:
   \begin{align}
    &[z,y_1^{(p_1)},\ldots,y_r^{(p_r)}],\\
    &[[z_i,y_1^{(p_1)},\ldots,y_r^{(p_r)}],[z_j,y_1^{(q_1)},\ldots,y_r^{(q_r)}]],
    \end{align}
 where $i\leq j$, $r\ge0$ and $0\le p_\ell,q_\ell<|\mathbb{K}|$ for each $\ell\in\{1,2,\ldots,r\}$. Moreover, If $i=j$, then 
 $(p_1,\ldots, p_r)<_{l} (q_1,\ldots, q_r)$, where $<_l$ is the lexicographic order on $\mathbb{N}_0^r$.
 \end{theorem}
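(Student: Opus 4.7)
The plan follows the same template as \Cref{universalbaseidentidades} and \Cref{almostuniversalbaseidentidades}. Let $T$ denote the $T_{\mathbb{Z}_2}$-ideal generated by (i)--(iv); these being already verified as identities, one needs only to show (a) every element of $\mathcal{L}(Z\cup Y)_{\mathbb{Z}_2}$ is a linear combination of the listed monomials modulo $T$, and (b) those monomials are linearly independent modulo $\mathrm{Id}_{\mathbb{Z}_2}(\mathrm{UT}_3^{(-)},\Gamma(1,1))$.

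For the spanning step I would work with left-normed commutators and split by $s(m)$, the number of $z$-variables. If $s(m)=0$, the monomial lies in $\mathcal{L}(Y)$ and (iii) hands the reduction to the known basis of the relatively free algebra of $\mathrm{UT}_2^{(-)}$. If $s(m)=1$, identity (ii) gives $[z,a,b]\equiv[z,b,a]\pmod T$ for even $a,b$ (their difference equals $[z,[a,b]]=-[[a,b],z]$); moving the unique $z$ to the front by antisymmetry and sorting the $y$'s produces $[z_i,y_1^{(p_1)},\ldots,y_r^{(p_r)}]$, with (iv) capping exponents when $\mathbb{K}$ is finite. If $s(m)=2$, I induct on length, writing $m=[m',x_n]$: when $x_n$ is a $z$-variable the $s=1$ reduction on $m'$ already places $[m',z_j]$ in the stated family (with all $q_\ell=0$), and when $x_n=y_\ell$, Jacobi together with the reordering of $y$'s around single $z$'s produces the recursion
\begin{align*}
\bigl[[[z_i,y_1^{(p_1)},\ldots],[z_j,y_1^{(q_1)},\ldots]],y_\ell\bigr]&\equiv\bigl[[z_i,\ldots,y_\ell^{(p_\ell+1)},\ldots],[z_j,y_1^{(q_1)},\ldots]\bigr]\\
&\quad+\bigl[[z_i,y_1^{(p_1)},\ldots],[z_j,\ldots,y_\ell^{(q_\ell+1)},\ldots]\bigr]\pmod T,
\end{align*}
keeping the expression inside the stated family. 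Antisymmetry of the outer bracket enforces $i\leq j$ and, when $i=j$, selects the representative with $(p_1,\ldots,p_r)<_l(q_1,\ldots,q_r)$ (the case $p=q$ producing zero). Finally, if $s(m)\geq 3$, I induct on length once more: the case $n=3$ is identity (i), and for $n\geq 4$ with $m=[m',x_n]$ either $s(m')\geq 3$ (induction gives $m'\in T$), or $s(m')=2$ and $x_n$ is a $z$, in which case the $s=2$ normal form writes $m'$ as a sum of $[A,B]$'s with $A,B$ both odd, turning $[m',x_n]$ into a sum of three-odd brackets which vanish by direct substitution into (i).

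For linear independence I follow \Cref{evaluacionesUT} and use the generic evaluations $z_i\mapsto\lambda_i e_{12}+\mu_i e_{23}$ and $y_\ell\mapsto\alpha_\ell e_{11}+\beta_\ell e_{22}+\delta_\ell e_{13}$ (suppressing the $(3,3)$-entry as in \Cref{evaluacionesUT}). An iterated-bracket computation gives
\[
[z_i,y_1^{(p_1)},\ldots,y_r^{(p_r)}]\mapsto\lambda_i\prod_{\ell=1}^{r}(\beta_\ell-\alpha_\ell)^{p_\ell}\,e_{12}+(-1)^{p_1+\cdots+p_r}\mu_i\prod_{\ell=1}^{r}\beta_\ell^{p_\ell}\,e_{23},
\]
while $[[z_i,y_1^{(p_1)},\ldots],[z_j,y_1^{(q_1)},\ldots]]$ evaluates (up to an overall sign) to $\bigl(\lambda_i\mu_j\,M(p,q)-\lambda_j\mu_i\,M(q,p)\bigr)e_{13}$, where $M(p,q)=\prod_{\ell}(\beta_\ell-\alpha_\ell)^{p_\ell}\beta_\ell^{q_\ell}$. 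Different pairs $(i,j)$ are separated by the $\lambda\mu$-factors, different $(p,q)$ by their monomials, and the constraint $(p)<_l(q)$ at $i=j$ selects exactly one representative of each antisymmetric pair; the $s=0$ layer inherits its independence from that of $\mathrm{UT}_2^{(-)}$. The most delicate step will be the $s=2$ spanning, namely careful verification of the Jacobi recursion above and the subsequent elimination of the $s\geq 3$ terms using only identities already contained in $T$; the rest is bookkeeping or a direct polynomial computation.
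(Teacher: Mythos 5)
Your proposal is correct and follows essentially the same strategy as the paper: a spanning argument organized by the number of odd variables (using (i), (ii), (iv), Jacobi and antisymmetry of the outer bracket), followed by linear independence via explicit graded evaluations sending the $z$'s into $\mathrm{span}\{e_{12},e_{23}\}$ and the $y$'s into the even component. The only real difference is that you handle the case $i=j$ with a single generic evaluation $z_i\mapsto\lambda_i e_{12}+\mu_i e_{23}$, whereas the paper first linearizes $z_i$ to reduce to the case $i<j$; both work, although the two terms of your displayed $e_{13}$-coefficient should carry the signs $(-1)^{q_1+\cdots+q_r}$ and $(-1)^{p_1+\cdots+p_r}$ respectively, a bookkeeping slip that does not affect the independence conclusion.
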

 \begin{proof}
  We shall work modulo the $T_{\mathbb{Z}_2}$-ideal of polynomial identities $T$, generated by the polynomial identities of the statement. First, consider a monomial 
    \[m=[x_{i_1,g_1},\ldots, x_{i_m, g_m}],\]
and let $s(m)=|\{j\mid g_j\neq 0\}|$. If $s(m)=0$, then $m\in\mathcal{L}(Y)$, so we are in the situation of a basis of the relatively free algebra of $\mathrm{UT}_2^{(-)}$.

If $s(m)=1$, then, using identity \eqref{almoscanonical2}, we may assume that
\[m=[z, y_{i_1},\ldots, y_{i_m}],\]
where $i_1\leq\cdots\leq i_k$. If $s(m)=2$, then using identity \eqref{almoscanonical2} and by repeated application of Jacobi identity, we get
\[m=\sum\limits\left[[z_i, y_{i_1},\ldots, y_{i_m}],[z_j, y_{j_1},\ldots, y_{j_s}]\right],\]
where $i\leq j$, $i_1\leq\cdots\leq i_m$ and $j_1\leq\cdots\leq j_s$ in each summand. Thus, $m$ can be written as
\[m=\sum[[z_i,y_1^{(p_1)},\ldots,y_r^{(p_r)}],[z_j,y_1^{(q_1)},\ldots,y_r^{(q_r)}]],\]
where $q_\ell,p_\ell\geq 0$. In addition, if the base field is finite and having $q$ elements, then identity \eqref{almoscanonical3} gives the upper bound $q_\ell,p_\ell<q$. If $i=j$, by anticommutativity of the commutator, we can suppose that $(p_1,\ldots, p_r)<_{l} (q_1,\ldots, q_r)$.

If $s(m)\geq 3$, it follows from identity \eqref{almostcanonical1} that $m\in T$.

Now, let us prove that the given set of polynomials is linearly independent, modulo $\mathrm{Id}_{\mathbb{Z}_2}(\mathrm{UT}_3^{(-)},\Gamma(1,1))$. Let
\[f=\sum\alpha_h h \in \mathrm{Id}_{\mathbb{Z}_2} (\mathrm{UT}_3^{(-)},\Gamma(1,1)),\]
where $\alpha_h\in\mathbb{K}$ and the sum runs over the set of polynomials of the statement. We may assume that the same set of variables appears in all the summands of $h$. That is, without loss of generality, we can consider that either $f\in\mathcal{L}(Y)$, or
\[f=\sum\limits_{s}\alpha_s [z, y_{1}^{(p_{1,s})},\ldots, y_{r}^{(p_{r,s})}],\]
where $l\in \{g, g^{-1}\}$, $r \ge0$, $0\le p_{\ell, s}<|\mathbb{K}|$, for each $\ell\in\{1,2,\ldots,r\}$, or 
\[f=\sum\limits_{s=1}^t \alpha_{s}\left[[z_{i},y_1^{(p_{1,s})},\ldots,y_r^{(p_{r,s})}],[z_{j},y_1^{(q_{1,s})},\ldots,y_r^{(q_{r,s})}]\right],\]
where $i\leq j$, $r\ge0$ and $0\leq p_{\ell, s},q_{\ell, s}<|\mathbb{K}|$, $1\leq \ell\leq r$. In addition, if $i=j$, then $(p_{1,s},\ldots, p_{r,s})<_{l} (q_{1,s},\ldots, q_{r,s})$.

As before, the first case is known. For the second case, we consider the evaluation $z\mapsto e_{12}$ and $y_i\mapsto \xi_i e_{22}$. Then, the evaluation gives
$$
    [x_l,y_1^{(p_{1,s})},\ldots,y_r^{(p_{r,s})}]\mapsto\prod_{i=1}^r\xi_i^{p_{i,s}}e_{12}.
$$
Since the set of polynomial on the right side of the equation is linearly independent, it follows that $\alpha_s=0$, for all $s$.

For the third case, if $i<j$, we consider the evaluation $x_i\mapsto e_{12}$, $x_{j} \mapsto e_{23}$ and $y_i\mapsto -\xi_ie_{11}+\xi_i'e_{33}$. Then, the evaluation gives
\[
    [[x_{i},y_1^{(p_{1,s})},\ldots,y_r^{(p_{r,s})}],[x_{j},y_1^{(q_{1,s})},\ldots,y_r^{(q_{r,s})}]]\mapsto\left(\prod_{i=1}^r\xi_i^{p_{i,s}}\right)\left(\prod_{i=1}^r\xi_i^{\prime q_{i,s}}\right)e_{13}.\]
    Since the set of polynomials appearing on the right hand side is linearly independent, we obtain that the given set of polynomials is linearly independent modulo $\mathrm{Id}(\mathrm{UT}_3^{(-)},\Gamma(1,1))$.

Now, assume that $i=j$ and suppose that 
    \[f=\sum\limits_{s=1}^t \alpha_{s}\left[[z_{i},y_1^{(p_{1,s})},\ldots,y_r^{(p_{r,s})}],[z_{i},y_1^{(q_{1,s})},\ldots,y_r^{(q_{r,s})}]\right]\in  \mathrm{Id}_{\mathbb{Z}_2}(\mathrm{UT}_3^{(-)},\Gamma(1,1)).\]
    Linearizing the variable $z_{i}$ in the polynomial $f$, we obtain a polynomial $g$ such that
    \begin{equation*}
    \begin{split}
    g= & \sum\limits_{s=1}^t \alpha_{s}\left[[z_{i},y_1^{(p_{1,s})},\ldots,y_r^{(p_{r,s})}],[z_{j},y_1^{(q_{1,s})},\ldots,y_r^{(q_{r,s})}]\right]+ \\
    & \sum\limits_{s=1}^t \alpha_{s}\left[[z_{j},y_1^{(p_{1,s})},\ldots,y_r^{(p_{r,s})}],[z_{i},y_1^{(q_{1,s})},\ldots,y_r^{(q_{r,s})}]\right]\in  \mathrm{Id}_{\mathbb{Z}_2}(\mathrm{UT}_3^{(-)},\Gamma(1,1))
\end{split}
\end{equation*}
Suppose that $i<j$. It follows that 
  \begin{equation*}
    \begin{split}
    g= & \sum\limits_{s=1}^t \alpha_{s}\left[[z_{i},y_1^{(p_{1,s})},\ldots,y_r^{(p_{r,s})}],[z_{j},y_1^{(q_{1,s})},\ldots,y_r^{(q_{r,s})}]\right]- \\
    & \sum\limits_{s=1}^t \alpha_{s}\left[[z_{i},y_1^{(q_{1,s})},\ldots,y_r^{(q_{r,s})}], [z_{j},y_1^{(p_{1,s})},\ldots,y_r^{(p_{r,s})}]\right]\in  \mathrm{Id}_{\mathbb{Z}_2}(\mathrm{UT}_3^{(-)},\Gamma(1,1)).
\end{split}
\end{equation*}
Recall that  $(p_{1,s},\ldots, p_{r,s})<_{l} (q_{1,s},\ldots, q_{r,s})$. By the previous case, the given set of polynomials is linearly independent modulo $\mathrm{Id}_{\mathbb{Z}_2}(\mathrm{UT}_3^{(-)},\Gamma(1,1))$. Thus, each $\alpha_s=0$.
 \end{proof}
 
Next, we study the Specht property for the variety of graded Lie algebras generated by $\mathrm{UT}_3^{(-)}$ endowed with the Almost Canonical grading when the base field $\mathbb{K}$ is infinite.
\begin{definition}
A pure polynomial of type $(1,1)$ is a nonzero multihomogeneous polynomial $f$ of the kind
\[f=\sum\limits_{s=1}^t \alpha_{s}[[z_1,y_1^{(p_{1,s})},\ldots,y_r^{(p_{r,s})}],[z_2,y_1^{(q_{1,s})},\ldots,y_r^{(q_{r,s})}]].\] 
A polynomial of type $(1,1)$ is a nonzero multihomogeneous polynomial $g$ of the kind
\[g=\sum\limits_{s=1}^t \alpha_{s}[[z,y_1^{(p_{1,s})},\ldots,y_r^{(p_{r,s})}],[z,y_1^{(q_{1,s})},\ldots,y_r^{(q_{r,s})}]].\] 
\end{definition}
Thus, a pure polynomial of type $(1,1)$ is linear on the variables of non-trivial degree, while a polynomial of type $(1,1)$ is never linear.

 \begin{theorem}\label{specthalmostcanonical}
 Let $\mathbb{K}$ be an infinite field of characteristic different from 2 and 
let $J$ be a $T_{\mathbb{Z}_2}$-ideal such that $\mathrm{Id}_{\mathbb{Z}_2}(\mathrm{UT}_3^{(-)}(\mathbb{K}),\Gamma(1,1))\subseteq J$. Then $J$ is finitely generated as $T_{\mathbb{Z}_2}$-ideal.
\end{theorem}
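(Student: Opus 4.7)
The plan is to follow the strategy of \Cref{specthUniversal,specthalmostUniversal}, partitioning the multihomogeneous elements of $J$ according to the basis of \Cref{almoscanonicallbaseidentidades}. Since $\mathbb{K}$ is infinite, $J$ is generated as a $T_{\mathbb{Z}_2}$-ideal by its multihomogeneous polynomials, and each such polynomial decomposes, modulo $\mathrm{Id}_{\mathbb{Z}_2}(\mathrm{UT}_3^{(-)},\Gamma(1,1))$, into four disjoint families: polynomials of trivial type in $\mathcal{L}(Y)$; single-$z$ commutators $[z_i,y_1^{(p_1)},\ldots,y_r^{(p_r)}]$; pure polynomials of type $(1,1)$ involving two distinct variables $z_i$, $z_j$ (with $i<j$); and polynomials of type $(1,1)$ sharing the same variable $z$. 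I will introduce the corresponding intersections with $J$, call them $\mathcal{Y}$, $\mathcal{B}$, $\mathcal{A}^{\mathrm{pure}}$, $\mathcal{A}^{\mathrm{type}}$, and reduce the theorem to proving that each admits a finite generating subset modulo the identities.

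The families $\mathcal{Y}$, $\mathcal{B}$, $\mathcal{A}^{\mathrm{pure}}$ will be handled by the techniques already developed. For $\mathcal{Y}$, the Specht property of $\mathrm{UT}_2^{(-)}$ from \cite[Theorem~5.16]{bahturin} produces a finite generating subset. For $\mathcal{B}$, the Higman-type argument of \Cref{finitoalmostuniversalcommutadorsimples} on sequences in $D(\mathbb{N}_0)$ from \Cref{Sequenceorder} applies verbatim. For $\mathcal{A}^{\mathrm{pure}}$, the pair-sequence argument of \Cref{finitog1g-1}, which evaluates the two $z$-slots independently and uses $D(\mathbb{N}_0^2)$, applies after a notational change (replacing $x_g,x_{g^{-1}}$ by $z_i,z_j$ with $i<j$).

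The genuinely new ingredient, and the main obstacle, is the set $\mathcal{A}^{\mathrm{type}}$, because the independent evaluations of the two $z$-slots on which \Cref{finitog1g-1} relies are unavailable: both slots now contain the same variable. This is precisely where the hypothesis that the characteristic is different from $2$ enters. Given $g(z,y_1,\ldots,y_n)\in\mathcal{A}^{\mathrm{type}}$, I will consider its bilinearization
$$
\hat g(z_1,z_2,y_1,\ldots,y_n):=g(z_1+z_2,y_1,\ldots,y_n)-g(z_1,y_1,\ldots,y_n)-g(z_2,y_1,\ldots,y_n),
$$
which belongs to $\langle g\rangle^{T_{\mathbb{Z}_2}}\subseteq J$ and, by expanding each summand of $g$ and using the antisymmetry of the outer bracket, turns out to be a pure polynomial of type $(1,1)$; hence $\hat g\in\mathcal{A}^{\mathrm{pure}}$. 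Conversely, specializing $z_1=z_2=z$ yields $\hat g(z,z,y_1,\ldots,y_n)=2\,g(z,y_1,\ldots,y_n)$, so by the invertibility of $2$ one obtains $g\in\langle\hat g\rangle^{T_{\mathbb{Z}_2}}$. Consequently $\mathcal{A}^{\mathrm{type}}$ is absorbed by any finite generating set of $\mathcal{A}^{\mathrm{pure}}$, and combining the four finite sets gives the desired finite generating subset of $J$.
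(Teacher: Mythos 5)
Your proposal is correct and follows essentially the same route as the paper: the same four-fold decomposition into trivial-type, single-$z$, pure type $(1,1)$, and type $(1,1)$ families, the same use of \cite[Theorem 5.16]{bahturin} and of the Higman-type arguments of \Cref{finitoalmostuniversalcommutadorsimples,finitog1g-1} for the first three, and the same key step of absorbing $\mathcal{B}^{(1,1)}$ into $\langle\mathcal{A}^{(1,1)}\rangle^{T_{\mathbb{Z}_2}}$ via bilinearization in $z$ and the invertibility of $2$. Your explicit computation $\hat g(z,z)=2g$ is exactly the "multilinearization process" the paper invokes.
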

\begin{proof}
Since $\mathbb{K}$ is an infinite field, $J$ is generated, as $T_{\mathbb{Z}_2}$-ideal, by its multihomogeneous polynomials. By \Cref{almoscanonicallbaseidentidades},
\[J=\langle \mathcal{Y}\cup\mathcal{B}^{1}\cup\mathcal{A}^{(1,1)}\cup \mathcal{B}^{(1,1)}\rangle^{T_{\mathbb{Z}_2}}+ \mathrm{Id}_{\mathbb{Z}_2}(\mathrm{UT}_3^{(-)},\Gamma(1,1)),\]
where
\[\mathcal{Y}=\{\textrm{polynomials of trivial type}\}\cap J,\]
\[\mathcal{B}^{1}=\{[z, y_{1}^{(k_1)},\ldots, y_{n}^{(k_n)}]\mid n\geq 1, k_i\geq 0\}\cap J,\]
\[\mathcal{A}^{(1,1)}=\{\textrm{pure polynomials of type $(1,1)$}\}\cap J,\]
\[\mathcal{B}^{(1,1)}=\{\textrm{polynomials of type $(1,1)$}\}\cap J.\]
Since $\ch\, \mathbb{K}\neq 2$, the multilinearization process in the variable $z$ gives $\mathcal{B}^{(1,1)}\subseteq \langle \mathcal{A}^{(1,1)}\rangle^{T_{\mathbb{Z}_2}}$. Therefore, 
\[J=\langle \mathcal{Y}\cup\mathcal{B}^{1}\cup\mathcal{A}^{(1,1)}\rangle^{T_{\mathbb{Z}_2}}+ \mathrm{Id}_{\mathbb{Z}_2}(\mathrm{UT}_3^{(-)},\Gamma(1,1)).\]
By \cite[Theorem 5.16]{bahturin} and employing similar arguments as those used in  \Cref{finitoalmostuniversalcommutadorsimples,finitog1g-1}, there exist finite subsets 
$\widetilde{Y}\subseteq \mathcal{Y}$, $\widetilde{B}^{1}\subseteq\mathcal{B}^{1}$  and $\widetilde{A}\subseteq \mathcal{A}^{(1,1)}$, such that
\[\mathcal{Y}\subseteq \langle \widetilde{Y}\rangle^{T_{\mathbb{Z}_2}}+\mathrm{Id}_{\mathbb{Z}_2}(\mathrm{UT}_3^{(-)},\Gamma(1,1)),\]
\[\mathcal{B}^{1}\subseteq \langle \widetilde{B}^{1}\rangle^{T_{\mathbb{Z}_2}}+\mathrm{Id}_{\mathbb{Z}_2}(\mathrm{UT}_3^{(-)},\Gamma(1,1)),\]
\[\mathcal{A}^{(1,1)}\subseteq \langle \widetilde{A}\rangle^{T_{\mathbb{Z}_2}}+\mathrm{Id}_{\mathbb{Z}_2}(\mathrm{UT}_3^{(-)},\Gamma(1,1)).\]
It follows that \[J= \langle \widetilde{Y}\cup \widetilde{B}^{1}\cup\widetilde{A}\rangle^{T_{\mathbb{Z}_2}}+ \mathrm{Id}_{\mathbb{Z}_2}(\mathrm{UT}_3^{(-)},\Gamma(1,1)).\]
By \Cref{almoscanonicallbaseidentidades}, $\mathrm{Id}_{\mathbb{Z}_2}(\mathrm{UT}_3^{(-)},\Gamma(1,1))$ has a finite basis as $T_{\mathbb{Z}_2}$-ideal. Thus, we can conclude that $J$ is finitely generated as $T_{\mathbb{Z}_2}$-ideal.
\end{proof}

The condition $\mathrm{char}\,\mathbb{K}\ne2$ is essential in the previous theorem, as illustrated by the following.
\begin{proposition}\label{almostcanonical_nonspecht}
Let $\mathbb{K}$ be an infinite field of characteristic 2. Then, the ideal $\mathrm{Id}_{\mathbb{Z}_2}(\mathrm{UT}_3^{(-)}(\mathbb{K}),\Gamma(1,1))$ does not satisfy the Specht property.
\end{proposition}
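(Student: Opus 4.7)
The plan is to exhibit an infinite strictly ascending chain of $T_{\mathbb{Z}_2}$-ideals extending $\mathrm{Id}_{\mathbb{Z}_2}(\mathrm{UT}_3^{(-)},\Gamma(1,1))$. The natural place to look is among polynomials of type $(1,1)$, since these are precisely what the proof of Theorem~\ref{specthalmostcanonical} fails to reduce to pure polynomials of type $(1,1)$ when $\mathrm{char}\,\mathbb{K}=2$: multilinearizing $[[z,A],[z,B]]$ in $z$ produces $[[z_1,A],[z_2,B]]+[[z_1,B],[z_2,A]]$, whose specialization to $z_1=z_2=z$ is zero rather than $2\,[[z,A],[z,B]]$, so the argument of Theorem~\ref{specthalmostcanonical} breaks at exactly this step. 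The failure of Specht in characteristic $2$ should therefore manifest itself inside the set $\mathcal{B}^{(1,1)}$ of polynomials of type $(1,1)$.

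My proposal is to consider an explicit family $\{f_n\}_{n\ge 1}$ of polynomials of type $(1,1)$ with enough trivial-degree variables that the symmetrization coming from multilinearization cannot collapse $f_n$ to a consequence of pure polynomials and lower $f_m$'s. A natural candidate is
\[
f_n \;=\; \bigl[\,[z,y_1,y_2^{(n)}],\;[z,y_3,y_4^{(n)}]\,\bigr],\qquad n\ge 1.
\]
Using Remark~\ref{evaluacionesUT} to set the $(3,3)$-entries to zero, substituting $z\mapsto\alpha e_{12}+\beta e_{23}$ and $y_i\mapsto a_ie_{11}+b_ie_{22}$, and iterating $[e_{12},y_i]=(a_i+b_i)e_{12}$ and $[e_{23},y_i]=b_ie_{23}$ (in characteristic $2$), one obtains the explicit formula
\[
f_n \;\longmapsto\; \alpha\beta\,\bigl[(a_1+b_1)b_2(a_3+b_3)^nb_4^n + b_1(a_2+b_2)b_3^n(a_4+b_4)^n\bigr]\,e_{13} \;=:\; \alpha\beta\,P_n\,e_{13}.
\]

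The next step is to characterize the $T_{\mathbb{Z}_2}$-closure inside the $e_{13}$-coefficient. Because $[e_{12},e_{13}]=[e_{23},e_{13}]=0$ in $\mathrm{UT}_3^{(-)}$, and, by identity~(ii) of Theorem~\ref{almoscanonicallbaseidentidades}, $[y_i,y_j,z]$ vanishes modulo $\mathrm{Id}$, the only non-collapsing operations are substitutions $z\mapsto[z,y_{j}^{(p)}]$ (which scale $\alpha\beta$ by $((a_j+b_j)b_j)^p$), relabelings of the $y$-variables, and bracketings with $y_k$ (which scale the coefficient by $a_k$). Consequently the $e_{13}$-coefficients of $\langle f_1,\ldots,f_N\rangle^{T_{\mathbb{Z}_2}}+\mathrm{Id}$ are confined to the ideal inside $\mathbb{K}[\alpha,\beta,a_i,b_i]$ generated by the $P_{n_s}$ with multipliers drawn from the subring $\mathcal{S}=\mathbb{K}[a_k,(a_i+b_i)b_i]$, which is exactly the subring fixed by the involutions $\sigma_i\colon b_i\leftrightarrow a_i+b_i$.

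The final step is to show that $P_{N+1}$ does not lie in this sub-ideal for any $N\ge 1$, and this is the step I expect to be the genuine obstacle. A pure symmetrization argument (applying $1+\sigma_i$ separately) fails to distinguish the $P_n$'s in a one-variable family such as $[[z,y^{(n)}],[z,y^{(n+1)}]]$, whose evaluations sit inside a principal $\mathcal{S}$-ideal; this is why four independent $y$-variables appear essential, so that the obstruction to membership lies in a product of non-symmetric pieces attached to independent groups of variables. I would attempt to extract a distinguishing specialization (or a quotient algebra in the variety) detecting a "leading" combination of the $\sigma_i$-antisymmetries of $P_{N+1}$ that cannot be reached by $\mathcal{S}$-combinations of $P_1,\ldots,P_N$. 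Once such a separation is produced, Lemma~\ref{inverseprocess} translates it back to a strict containment of $T_{\mathbb{Z}_2}$-ideals, and iterating yields the infinite ascending chain, violating Specht.
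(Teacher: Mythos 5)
Your diagnosis of where the characteristic-$2$ obstruction lives is correct --- the multilinearization step in the proof of \Cref{specthalmostcanonical} collapses on polynomials of type $(1,1)$ --- but the proof has a genuine gap, and you name it yourself: you never show that $f_{N+1}\notin\langle f_1,\ldots,f_N\rangle^{T_{\mathbb{Z}_2}}+\mathrm{Id}_{\mathbb{Z}_2}(\mathrm{UT}_3^{(-)},\Gamma(1,1))$. That non-membership is the entire content of the proposition; everything before it (the choice of $f_n$, the evaluation formula for $P_n$, the description of the subring $\mathcal{S}$) is preparation. Moreover, the intermediate claim that the $e_{13}$-coefficients of the $T_{\mathbb{Z}_2}$-ideal generated by $f_{n_1},\ldots,f_{n_s}$ are ``confined to the ideal generated by the $P_{n_s}$ with multipliers from $\mathcal{S}$'' is itself not justified: a $T_{\mathbb{Z}_2}$-ideal is closed under arbitrary graded substitutions (including sums such as $z\mapsto\lambda_1[z,u_1]+\lambda_2[z,u_2]$) and arbitrary outer Lie multiplications, and you would need to check that all of these really land in the stated $\mathcal{S}$-module before any separation argument can begin. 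As written, the proposal is a plan, not a proof.

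The paper avoids this difficulty entirely by transfer. The $\mathbb{Z}_2$-grading $\Gamma(1,1)$ is the coarsening, under the projection $\alpha:\mathbb{Z}\to\mathbb{Z}_2$, of the canonical $\mathbb{Z}$-grading $\Delta$ on $\mathrm{UT}_3^{(-)}$, for which a strictly increasing chain of $T_{\mathbb{Z}}$-ideals in characteristic $2$ is already known from the proof of \cite[Theorem 20]{correa}. Taking $c_k=[z_1,y_1,\ldots,y_k,z_1]$ (note these are again quadratic in a single nontrivial-degree variable, consistent with your diagnosis of where the failure must occur) and the substitution $\pi$ of \Cref{transferSpecht}, one checks that $\pi(c_k)$ reduces modulo $\mathrm{Id}(\Delta)$ to the known chain, and \Cref{transferSpecht} then pushes the strict inclusions down to the $\mathbb{Z}_2$-graded setting. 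If you want to complete your argument instead, you would essentially have to reprove the combinatorial separation that \cite{correa} already supplies; the multilinear family $c_k$ is much easier to control than your $f_n$ with high powers of $y_2$ and $y_4$, for which it is not even clear that consecutive terms are independent (bracketing $f_n$ externally with $y_2$ and $y_4$ already produces $f_{n+1}$ plus cross terms that would have to be analyzed).
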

\begin{proof}
Let $\Delta$ be the canonical $\mathbb{Z}$-grading on $\mathrm{UT}_3^{(-)}$, and let $\alpha:\mathbb{Z}\to\mathbb{Z}_2$ be the canonical projection. Then, $\Delta^\alpha\cong\Gamma(1,1)$. Let $\pi:\mathbb{K}\langle X^{\mathbb{Z}_2}\rangle\to\mathbb{K}\langle X^\mathbb{Z}\rangle$ be defined by $\pi(y_i)=x_{i,0}+x_{i,2}$ and $\pi(z_i)=x_{i,1}$. For each $k>1$, let
$$
c_k:=[z_1,y_1,\ldots,y_k,z_1].
$$
Then, an easy computation shows that $\pi(c_k)+\mathrm{Id}(\Delta)=[x_{1,1},x_{0,1},\ldots,x_{0,k},x_{1,1}]+\mathrm{Id}(\Delta)$. From the proof of \cite[Theorem 20]{correa}, one has
$$
\langle\pi(c_2)\rangle_{T_\mathbb{Z}}+\mathrm{Id}(\Delta)\subsetneq\langle\pi(c_2),\pi(c_3)\rangle_{T_\mathbb{Z}}+\mathrm{Id}(\Delta)\subsetneq\cdots
$$
Hence, it follows from \Cref{transferSpecht} that $\Gamma(1,1)$ does not satisfy the Specht property.
\end{proof}

\subsection{Remaining Grading}
We consider the Remaining grading on $\mathrm{UT}_3^{(-)}$. Let $g\in G$ be any non-trivial element and consider the elementary grading given by $\Gamma(g,1)$.  We denote $z_i=x_{i,g}$ and $y_i=x_{i,1}$ for all $i\in\mathbb{N}$. It is straightforward to verify that the following polynomials are graded polynomial identities for $(\mathrm{UT}_3^{(-)},\Gamma(g,1))$:
\begin{equation}\label{remaining3}
   x_{1,l}=0,\quad l\notin \{1,g\},
\end{equation}
\begin{equation}\label{remaining1}
    [z_1, z_2],
\end{equation}
\begin{equation}\label{remaining2}
    [z_1, [y_2,y_3], [y_4, y_5]],
\end{equation}
\begin{equation}\label{meatabelianaremaining}
    [[y_1,y_2],[y_3,y_4]].
\end{equation}
In addition, if $\mathbb{K}$ is finite with $q$ elements, then the following elements

\begin{equation}\label{remaining6}
    [z,y_1^{(q)},y_2^{(q)}]-  [z,y_1^{(q)},y_2]-  [z,y_1,y_2^{(q)}]+ [z,y_1,y_2],
\end{equation}
\begin{equation}
    [z,[y_1,y_2],y_3^{(q)}]- [z,[y_1,y_2], y_3],
\end{equation}
\begin{equation}\label{remaining4}
    [y_1, y_2, y_3^{(q)}]-[y_1, y_2, y_3],
\end{equation}
\begin{equation}\label{remaining5}
    [y_2, y_1, y_2^{(q-1)}]+ [y_2,y_1]-[y_2,y_1^{(q)}]-[y_2,y_1, y_2^{(q-1)}],
\end{equation}
are graded identities for $(\mathrm{UT}_3^{(-)},\Gamma(g,1))$. As before, the polynomials \eqref{meatabelianaremaining}, \eqref{remaining4} and \eqref{remaining5} are ordinary polynomial identities of $\mathrm{UT}_2^{(-)}$.
\begin{definition}\label{comutadorestipo1}
A commutator of type $1$ is
\[[z, y_{i_1}^{(k_1)},\ldots, y_{i_n}^{(k_n)}],\]
where $i_1<\cdots <i_n$ and $0\leq k_i< 2|\mathbb{K}|$ for each $1\leq i\leq n$.
\end{definition}

\begin{definition}\label{comutadorestipo2}
Let $\mathcal{B}$ be a basis of the relatively free algebra of $\mathrm{UT}_2(\mathbb{K})^{(-)}$ in the variables $Y$. A commutator of type 2 is a commutator of the kind 
\[[m, z, y_{i_1}^{(q_1)},\ldots, y_{i_n}^{(q_n)}],\]
where $m\in\mathcal{B}$, $|m|\geq 2$ and $0\leq q_i< |\mathbb{K}|$ for all $1\leq i\leq n$.
\end{definition}

\begin{lemma}\label{conjuntogeradorremaining}
Let $\mathbb{K}$ be an arbitrary field and let $T$ be the $T_{G}$-ideal generated by the identities listed above. Then, the vector space $L\langle X^G\rangle$, modulo $T$, is generated by a basis $\mathcal{B}$ of the relatively free algebra of $\mathrm{UT}_2(\mathbb{K})^{(-)}$ in the variables $Y$, and all commutators of type 1 and 2.
\end{lemma}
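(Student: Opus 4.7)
The plan is to show, modulo $T$, that every monomial commutator $m\in\mathcal{L}(X^G)$ is a $\mathbb{K}$-linear combination of elements of $\mathcal{B}$ and of commutators of type 1 and type 2. It suffices to treat monomial commutators, and by identity \eqref{remaining3} I assume every variable in $m$ has degree in $\{1,g\}$, so $m$ involves only $y$'s and $z$'s; let $s(m)$ denote the number of $z$-atoms in $m$, and split into three cases.

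If $s(m)=0$, then $m\in\mathcal{L}(Y)$, and the identities \eqref{meatabelianaremaining}, \eqref{remaining4}, \eqref{remaining5} (the latter two when $\mathbb{K}$ is finite) lie in $T$ and generate $\mathrm{Id}(\mathrm{UT}_2^{(-)})\cap\mathcal{L}(Y)$, so $m$ reduces to a combination of elements of $\mathcal{B}$ by the known description of the relatively free algebra of $\mathrm{UT}_2^{(-)}$. If $s(m)\ge 2$, I look at the bracket tree of $m$ and pick a deepest subtree $t$ containing at least two $z$-atoms; by minimality $t=[t_1,t_2]$ with each $t_i$ of degree $g$, and since $T$ is a $T_G$-ideal containing $[z_1,z_2]$ from \eqref{remaining1}, the graded substitution $z_1\mapsto t_1$, $z_2\mapsto t_2$ gives $t\equiv 0$ modulo $T$, so $m\in T$.

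The case $s(m)=1$ is the crux. I express $m$ as a linear combination of left-normed commutators and reduce each summand. If the unique $z$ is in the first slot, the summand is $[z,y_{j_1},\ldots,y_{j_{r-1}}]$; I sort the $y$'s in nondecreasing index using Jacobi, and the correction terms of the shape $[z,\ldots,[y_a,y_b],\ldots]$ are re-associated by Jacobi to $[[y_a,y_b],z,\ldots]$, a type 2 commutator after rewriting $[y_a,y_b]$ inside $\mathcal{B}$. Identity \eqref{remaining2} prevents two nested $y$-commutator factors after $z$ from coexisting, and identities \eqref{meatabelianaremaining}, \eqref{remaining4}, \eqref{remaining5} normalize any $y$-prefix within $\mathcal{B}$. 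In the finite-field case, setting $y_1=y_2=y$ in \eqref{remaining6} yields $[z,y^{(2q)}]-2[z,y^{(q+1)}]+[z,y^{(2)}]\equiv 0$ modulo $T$, which, together with $T_G$-substitutions, bounds every exponent in type 1 below $2|\mathbb{K}|$. If $z$ is not in the first slot, the summand is $[u,z,y_{k_1},\ldots,y_{k_\ell}]$ with $u$ a left-normed $y$-word; the case $|u|=1$ collapses to the previous one by antisymmetry, while for $|u|\ge 2$ I normalize $u$ inside $\mathcal{B}$, sort the trailing $y$'s as before, and apply $[z,[y_1,y_2],y_3^{(q)}]-[z,[y_1,y_2],y_3]\equiv 0$ together with its $T_G$-consequences to bound the trailing exponents below $|\mathbb{K}|$.

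The main obstacle is the $s(m)=1$ case, where two interlocking technicalities must be organized into a terminating procedure. First, the Jacobi rearrangements of the trailing $y$'s migrate terms between types 1 and 2 and can spawn new $[y,y]$ brackets after $z$, which must be absorbed by \eqref{remaining2} and by the $\mathcal{B}$-description. Second, the exponent bound $q_j<|\mathbb{K}|$ at positions $j\ge 2$ of a type 2 commutator is not a literal instance of any listed identity and has to be extracted by substituting Lie polynomials into $[z,[y_1,y_2],y_3^{(q)}]-[z,[y_1,y_2],y_3]$. A double induction on commutator length and on the number of unsorted or overflowing positions then delivers the spanning statement.
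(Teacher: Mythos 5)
Your proposal is correct and follows essentially the same route as the paper's proof: reduce to monomials, split on the number $s(m)$ of variables of degree $g$, kill $s(m)\ge 2$ via the graded consequence of $[z_1,z_2]$, reduce $s(m)=0$ to the known $\mathrm{UT}_2^{(-)}$ case, and for $s(m)=1$ sort the $y$'s by Jacobi, absorbing the correction terms into commutators of type $2$ with the help of \eqref{remaining2}. If anything, you are more explicit than the paper on two points it leaves implicit, namely the minimal-subtree argument for $s(m)\ge 2$ and the derivation of the exponent bounds $k_i<2|\mathbb{K}|$ and $q_i<|\mathbb{K}|$ from the finite-field identities \eqref{remaining6} and $[z,[y_1,y_2],y_3^{(q)}]-[z,[y_1,y_2],y_3]$.
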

\begin{proof}
 We shall work modulo $T$. First, consider a monomial 
    \[m=[x_{i_1,g_1},\ldots, x_{i_m, g_m}],\]
and let $s(m)=|\{j\mid g_j\neq 1\}|$. If $s(m)=0$, then $m\in\mathcal{L}(Y)$. This means that $m$ can be written as linear combination of elements of $\mathcal{B}$.

If $s(m)\geq 2$, by identity \eqref{remaining1}, we have $m=0$.

If $s(m)=1$, it is enough to prove the lemma for
\[m=[z, y_{i_1},\ldots, y_{i_n}].\]
Assume that $i_1\leq i_2\leq \cdots \leq i_s$ for some $1\leq s<n$. Then,
\[m=[z, y_{i_1},\ldots, y_{i_{s+1}}, y_{i_s},\ldots, y_{i_n}]-[ y_{i_s}, y_{i_{s+1}},[z, y_{i_1},\ldots, y_{i_{s-1}}],\ldots, y_{i_n}].\]
Using the identity \eqref{remaining2} and the Jacoby identity, we have that the second part of $m$ is a linear combination of polynomials of type $2$. If $i_{s-1}\leq i_{s+1}$, then the result follows by induction on $n-s$. Otherwise, we repeat this process for $y_{i_{s-1}}$ and $y_{i_{s+1}}$ on the first part of $m$. The process ends in at most $s$ steps, so we obtain a polynomial
\[[z, y_{j_1},\ldots, y_{j_{s+1}}, y_{i_{s+2}},\ldots, y_{i_n}],\]
where $j_1\leq\cdots \leq j_{s+1}$. The result follows by induction.
\end{proof}

\begin{remark}\label{evaluacionesremark}
Note that $A=span_{\mathbb{K}}\{e_{22},e_{33}, e_{23}\}\cong \mathrm{UT}_2^{(-)}$ as $\mathbb{K}$-algebra.  Consider an evaluation $\psi_0:\mathcal{L}(Y)\to A$ given by   \[\psi_0(y_i)= \xi_i' e_{22} +\xi''_i e_{23},\]where $\xi'_i, \xi''_i\in\mathbb{K}$ for all $i\geq 1$. We define $\psi: \mathcal{L}(X^G)\to \mathrm{UT}_3^{(-)}$ via 
\[ \psi(y_i)=  \xi_i e_{11}+ \psi_0(y_i),\quad \psi(z)=e_{12},\quad \psi(w)=0,\]
where $\xi_i\in\mathbb{K}$ for all $i\geq 1$ and $w\notin Y\cup\{z\}$.

Let $\mathcal{B}$ be a basis of the relatively free algebra of $\mathrm{UT}_2(\mathbb{K})^{(-)}$, in the variables $Y$, and consider the linear map $\pi: span_{\mathbb{K}}\{e_{23}\}\to \mathbb{K}$ given by $\pi(\lambda e_{23})=\lambda$. If $m\in\mathcal{B}$ and $|m|\geq 2$, we have that
\begin{equation*}
\begin{split}
\psi([m, z, y_{i_1}^{(q_1)},\ldots, y_{i_n}^{(q_n)}])= &[\psi_0(m), e_{12}]\prod\limits_{i=1}^n \xi_i^{q_i}\\
=& -(\pi(\psi_0(m))\left(\prod\limits_{i=1}^n \xi_i^{q_i}\right)e_{13}.
\end{split}
\end{equation*}
Moreover, note that the image of an evaluation of $m$ on $A$ spans a subspace of dimension at most $1$. As a consequence, if $f:A\to\mathbb{K}$ is a linear map, then there is $\lambda\in\mathbb{K}$ such that 
\[f\circ \psi_0 (m)=\lambda \pi\circ \psi_0(m),\]
for all $m\in\mathcal{B}$ such that $|m|\geq 2$.
\end{remark}

\begin{lemma}\label{liconjuntopoltipo2}
Let $\mathbb{K}$ be an arbitrary field. The set of all the commutators of type $2$ is linearly independent, modulo $\mathrm{Id}_G(\mathrm{UT}_3^{(-)},\Gamma(g,1))$.
\end{lemma}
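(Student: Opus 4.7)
The plan is to suppose a finite $\mathbb{K}$-linear combination of commutators of type~2 is a graded polynomial identity and force every coefficient to vanish using the evaluations $\psi$ constructed in Remark~\ref{evaluacionesremark}. Since distinct commutators of type~2 have well-defined multidegrees in the graded variables, I would first reduce to a fixed multidegree and assume
\begin{equation*}
f=\sum_{(m,q)}\alpha_{m,q}\,[m,z,y_{i_1}^{(q_1)},\ldots,y_{i_n}^{(q_n)}]\in\mathrm{Id}_G(\mathrm{UT}_3^{(-)},\Gamma(g,1)),
\end{equation*}
where $m\in\mathcal{B}$ runs over basis elements with $|m|\ge2$ of the appropriate multidegree and $q=(q_1,\ldots,q_n)$ satisfies $0\le q_j<|\mathbb{K}|$.

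Next, for each tuple of scalars $\xi_i,\xi_i',\xi_i''\in\mathbb{K}$, I would apply the evaluation $\psi$ of Remark~\ref{evaluacionesremark}. By the computation carried out there, each summand evaluates as
\begin{equation*}
\psi\bigl([m,z,y_{i_1}^{(q_1)},\ldots,y_{i_n}^{(q_n)}]\bigr)=-\pi(\psi_0(m))\cdot\Bigl(\prod_{j=1}^n\xi_{i_j}^{q_j}\Bigr)\cdot e_{13},
\end{equation*}
so $\psi(f)=0$ translates into an identity of functions on the scalars $(\xi_i,\xi_i',\xi_i'')$. The decisive observation is that $\pi(\psi_0(m))$ depends only on the $\xi_i',\xi_i''$ coming from evaluating the variables inside $m$, whereas $\prod\xi_{i_j}^{q_j}$ depends only on the $\xi_{i_j}$ from the post-$z$ tail; these two families of scalars are formally independent, so the expression separates.

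From here I would invoke two independent linear-independence facts. First, the monomials $\prod\xi_{i_j}^{q_j}$ with $0\le q_j<|\mathbb{K}|$ are $\mathbb{K}$-linearly independent as polynomial functions on $\mathbb{K}$ (using the relation $\xi^{|\mathbb{K}|}=\xi$ in the finite case, as set up in the Preliminaries). Hence, grouping by the tuple $q$, one obtains for each such $q$ the identity
\begin{equation*}
\sum_{m}\alpha_{m,q}\,\psi_0(m)=0\quad\text{as a function of the }\xi_i',\xi_i'',
\end{equation*}
after noting that $\psi_0(m)\in[A,A]=\mathbb{K}e_{23}$ (since $|m|\ge2$), so $\pi$ captures all of $\psi_0(m)$. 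Second, by Remark~\ref{evaluacionesUT} the evaluations of $m$ into $A\cong\mathrm{UT}_2^{(-)}$ with vanishing $(3,3)$-entry detect every polynomial identity of length $\ge2$; since $\mathcal{B}$ is a basis of the relatively free algebra of $\mathrm{UT}_2^{(-)}$ in the variables $Y$, the elements $\psi_0(m)$ for distinct $m\in\mathcal{B}$ with $|m|\ge2$ are $\mathbb{K}$-linearly independent as functions of the $\xi_i',\xi_i''$. Combining both independence statements forces $\alpha_{m,q}=0$ for every $(m,q)$.

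The main technical delicacy will be justifying the separation of variables when the same index $i$ occurs both inside $m$ and in the post-$z$ tail, but this is automatic because the three scalars $\xi_i,\xi_i',\xi_i''$ attached to each $y_i$ are formally independent in $\psi$. Everything else reduces to the linear independence of $\mathcal{B}$ modulo $\mathrm{Id}(\mathrm{UT}_2^{(-)})$, which is built into the hypothesis that $\mathcal{B}$ is a basis, together with the classical linear independence of bounded-exponent monomials in the function model used throughout the paper.
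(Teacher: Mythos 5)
Your proof is correct and reaches the conclusion by a noticeably more direct route than the paper's. The paper also starts from the generic evaluation of \Cref{evaluacionesremark} and separates terms according to the monomials $\prod\xi_i^{q_i}$, but to handle the several basis elements $m\in\mathcal{B}$ sharing a given exponent tuple it invokes \Cref{inverseprocess}: it builds a finite family of evaluations $\psi_{0j}$ and linear functionals $f_{j}$ acting as a dual basis to the relevant $m$'s modulo $\mathrm{Id}(\mathrm{UT}_2^{(-)})$, and extracts one coefficient $\alpha_{h_0}$ at a time from $\sum_j\lambda_j\psi_j(f)=0$. You bypass that dual-basis construction entirely: after isolating, for each tuple $q$, the relation $\sum_m\alpha_{m,q}\,\pi(\psi_0(m))=0$ valid for all choices of $(\xi',\xi'')$, you note that $\pi$ is injective on $[A,A]=\mathbb{K}e_{23}$ and that evaluations with vanishing $e_{33}$-component detect every identity among commutators of length at least $2$ (\Cref{evaluacionesUT}), so $\sum_m\alpha_{m,q}m\in\mathrm{Id}(\mathrm{UT}_2^{(-)})$ and the linear independence of $\mathcal{B}$ finishes the argument. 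This is cleaner and costs nothing in generality. One small caution: the opening reduction ``to a fixed multidegree'' is not available over a finite field (multihomogeneous components of an identity need not be identities there), and the lemma is stated for arbitrary $\mathbb{K}$; fortunately the reduction is also unnecessary for your argument, since grouping first by the exponent tuple $q$ and then by $m$ works verbatim for an arbitrary finite linear combination of type-$2$ commutators --- the monomials $\prod\xi_i^{q_i}$ with all exponents below $|\mathbb{K}|$ remain linearly independent as functions across different multidegrees. (The paper instead only assumes, harmlessly, that all summands involve the same set of variables.)
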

\begin{proof}
Let $S_0$ be a finite set of commutators of type $2$, and assume that every polynomial in $S_0$ contains the same set of variables. Write
\[f=\sum_{h\in S_0}\alpha_h h \in \mathrm{Id}_{G} (\mathrm{UT}_3^{(-)},\Gamma(g,1)).  \]
Given $m\in\mathcal{B}$, we shall denote
\[
[m,z,Y^{(q_{1},\ldots,{q_{n}})}]:=[h, z, y_1^{(q_{1})},\ldots, y_n^{(q_{n})}].\] 
For $h\in S_0$, we shall write $h=[m_h,z,Y^{(q_{1,h},\ldots,q_{n,h})}]$. Given $Q:={(q_1,\ldots,q_n)}$, define
\[B_Q=\{m\in\mathcal{B} \mid[m, z, Y^{(q_1,\ldots,q_n)}]\in S_{0}\}.\] 

Fix $h_0\in S_0$ and let $Q=(q_{1,h_0},\ldots,q_{n,h_0})$ and $m_{0}= m_{h_0}$. Consider the algebra $A=span_{\mathbb{K}}\{e_{22},e_{33},e_{23}\}$. By \Cref{inverseprocess}, there exist a finite set $\Lambda$, a family of homomorphism $\{\psi_{0j}:\mathcal{L}(Y)\to A\}_{j\in\Lambda}$ and a family of linear maps $\{f_{j}:A\to\mathbb{K}\}_{j\in\Lambda}$  
such that
\[\sum_{j\in\Lambda}f_{j}\circ\psi_{0j}\left(\sum\limits_{m\in B_Q} \alpha_m m\right)=\alpha_{m_0}. \]
By \Cref{evaluacionesUT}, for each $j\in\Lambda$, there exists a homomorphism $\psi'_{0j}:\mathcal{L}(Y)\to A$ such that 
\[\psi_{0j}(m)=\psi'_{0j}(m),\] 
for each commutator $m$ of length at least $2$, and, for each $y\in Y$, the entry $(3,3)$ of $\psi'_{0j}(y)$ is $0$.

From \Cref{evaluacionesremark}, there is a family of $G$-graded homomorphism $\{\psi_{j}:\mathcal{L}(X^G)\to \mathrm{UT}_3^{(-)}\}_{j\in\Lambda}$ and a family of scalars $\{\lambda_j\mid j\in\Lambda\}$ such that
\[f_j\circ \psi'_{0j}(m)=\lambda_j \pi\circ\psi'_{0j}(m),\]
\[\psi_j([m,z, y_1^{(k_1)},\ldots, y_n^{(k_n)}])=-\lambda_j\pi(\psi'_{0j}(m)) \left(\prod\limits_{i=1}^n \xi_i^{k_i}\right) e_{13},\]
for each commutator $m$ of length at least $2$. Then, we have
\begin{eqnarray*}
0 & =&\sum\limits_{j\in\Lambda} \lambda_j \psi_j(f) = \sum\limits_{j\in\Lambda}\lambda_j\psi_j\left( \sum\limits_{m_h\in B_Q}\alpha_h h + \sum\limits_{m_h\notin B_Q} \alpha_h h \right) \\
&=&\left(-\left( \sum\limits_{j\in\Lambda} \lambda_j \pi\circ\psi'_{0j}(\sum\limits_{m_h\in B_Q}\alpha_h m_h)\right)\prod\limits_{i=1}^n \xi_i^{q_i, h_0}- \sum\limits_{j\in \Lambda}\sum\limits_{m_h\notin B_Q} \left(\lambda_j \pi\circ\psi'_{0j}(\alpha_h m_h) \prod\limits_{i=1}^n \xi_i^{q_{i,h}}\right)\right) e_{13} \\
& = &\left( -\alpha_{h_0} \prod\limits_{i=1}^n \xi_i^{q_i,h_0} - \sum\limits_{j\in\Lambda}\sum\limits_{m_h\notin B_Q}\left(f_i\circ\psi'_{0_j}(\alpha_h m_h) \prod\limits_{i=1}^n \xi_i^{q_{i,h}}\right) \right) e_{13} .
\end{eqnarray*}
Hence
\[0=-\alpha_{h_0} \prod\limits_{i=1}^n \xi_i^{q_i, h_0} - \sum\limits_{j\in\Lambda}\sum\limits_{m_h\notin B_Q}\left(f_j\circ\psi'_{0_j}(\alpha_h m_h)\prod\limits_{i=1}^n \xi_i^{q_{i,h}}\right).\]
It follows that $\alpha_{h_0}=0$. Therefore, $S_0$ is linearly independent modulo $\mathrm{Id}_{G} (\mathrm{UT}_3^{(-)},\Gamma(g,1))$.
\end{proof}
\begin{theorem}\label{baseremaininggrading}
Let $\mathbb{K}$ be an infinite field, $G$ any group and consider the Remaining $G$-grading $\Gamma(g,1)$ on $\mathrm{UT}_3^{(-)}(\mathbb{K})$ (where $g\ne1$). Then, $\mathrm{Id}_G(\mathrm{UT}_3^{(-)},\Gamma(g,1))$ is generated, as a $T_G$-ideal, by:
\renewcommand{\labelenumi}{(\roman{enumi})}
 \begin{enumerate}
 \item $x_{1,l}=0$, $l\notin \{1,g\}$,
 \item $[z_1, z_2]=0$,
 \item $[[y_1,y_2],[y_3,y_4]]=0$,
 \item $[z_1, [y_2,y_3], [y_4, y_5]]=0$.
 \end{enumerate}
A basis of the relatively free algebra $\mathcal{L}(X^G)$, modulo $\mathrm{Id}_G(\mathrm{UT}_3^{(-)},\Gamma(g,1))$, is given by  a basis $\mathcal{B}$ of the relatively free algebra of $\mathrm{UT}_2(\mathbb{K})^{(-)}$, in the variables $Y$, and all commutators of type 1 and 2 (see \Cref{comutadorestipo1,comutadorestipo2}).
\end{theorem}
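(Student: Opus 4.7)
The polynomials (i)--(iv) are readily verified to be identities of $(\mathrm{UT}_3^{(-)},\Gamma(g,1))$, as already observed in \eqref{remaining3}--\eqref{meatabelianaremaining}; calling $T$ the $T_G$-ideal they generate, this gives $T\subseteq\mathrm{Id}_G(\mathrm{UT}_3^{(-)},\Gamma(g,1))$. Over an infinite field the metabelian identity $[[y_1,y_2],[y_3,y_4]]$ classically generates $\mathrm{Id}(\mathrm{UT}_2^{(-)})$ as a $T$-ideal, so all identities preceding Lemma \ref{conjuntogeradorremaining} (the finite-field ones become automatic) already follow from (i)--(iv). Hence Lemma \ref{conjuntogeradorremaining} applies to the present $T$ and shows that modulo $T$ the algebra $\mathcal{L}(X^G)$ is spanned by $\mathcal{B}$ together with the type~1 and type~2 commutators. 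The theorem will follow as soon as I prove this spanning set is linearly independent modulo $\mathrm{Id}_G$, since that single statement delivers both the reverse inclusion $\mathrm{Id}_G\subseteq T$ and the asserted basis.

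Because $\mathbb{K}$ is infinite, a relation in $\mathrm{Id}_G$ decomposes into multihomogeneous summands, and identity (ii) ensures that every monomial of $\mathcal{L}(X^G)/T$ has $z$-degree $0$ or $1$, so I split the linear-independence argument by $z$-degree. A relation of $z$-degree $0$ has the form $\sum_b\gamma_b b\in\mathrm{Id}_G$ with $b\in\mathcal{B}\subseteq\mathcal{L}(Y)$; evaluating into the subalgebra $\langle e_{22},e_{33},e_{23}\rangle\cong\mathrm{UT}_2^{(-)}$ of the identity component shows that this relation belongs to $\mathrm{Id}(\mathrm{UT}_2^{(-)})$, and since $\mathcal{B}$ is by choice a basis of the relatively free algebra of $\mathrm{UT}_2^{(-)}$, each $\gamma_b$ vanishes.

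The substantive case is $z$-degree $1$, where the relation has the shape $\alpha\,c+\sum_d\beta_d\,d\in\mathrm{Id}_G$ with $c$ the (unique) type~1 commutator of the given multidegree and $d$ ranging over the type~2 commutators of the same multidegree. My separating evaluation is
\[
\psi_A(z)=e_{12},\qquad\psi_A(y_i)=\xi_i e_{11}+\eta_i e_{22},
\]
with scalar parameters $\xi_i,\eta_i\in\mathbb{K}$. Since $e_{11}$ and $e_{22}$ commute, every $m\in\mathcal{B}$ with $|m|\ge 2$ evaluates to $0$, so $\psi_A(d)=0$ for every type~2 commutator $d$; a direct induction yields
\[
\psi_A\bigl([z,y_{i_1}^{(k_1)},\ldots,y_{i_n}^{(k_n)}]\bigr)=\Bigl(\prod_{j=1}^n(\eta_{i_j}-\xi_{i_j})^{k_j}\Bigr)e_{12},
\]
a nonzero polynomial function of the $\xi_i,\eta_i$ (using the infiniteness of $\mathbb{K}$), forcing $\alpha=0$. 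What survives is $\sum_d\beta_d\,d\in\mathrm{Id}_G$, and Lemma \ref{liconjuntopoltipo2} then gives $\beta_d=0$ for every $d$. The only new ingredient beyond the two cited lemmas is the evaluation $\psi_A$, and I expect that to be the sole (mild) obstacle: it must annihilate all type~2 commutators while remaining faithful on type~1, which works precisely because a type~2 monomial carries a nontrivial $y$-commutator $m$ with $|m|\ge 2$.
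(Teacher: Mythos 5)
Your proposal is correct and follows essentially the same route as the paper: spanning via Lemma~\ref{conjuntogeradorremaining}, a diagonal evaluation that annihilates all type~2 commutators while separating the type~1 coefficient, and then Lemma~\ref{liconjuntopoltipo2} for what remains. The only (immaterial) difference is your choice of evaluation $y_i\mapsto\xi_i e_{11}+\eta_i e_{22}$ in place of the paper's $y_i\mapsto\xi_i e_{22}+\xi_i' e_{33}$, and your welcome explicitness about the metabelian identity generating $\mathrm{Id}(\mathrm{UT}_2^{(-)})$ over an infinite field.
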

\begin{proof}
By \Cref{conjuntogeradorremaining}, it is sufficient to prove that the generating  set 
$S$ formed by the elements of $\mathcal{B}$ and commutators of type 1 and 2 is linearly independent, modulo $\mathrm{Id}_G(\mathrm{UT}_3^{(-)},\Gamma(g,1))$. Let $S_0$ be a finite subset of $S$ and assume that every polynomial in $S_0$ contains the same set of variables. Let
\[f=\sum_{h\in S_0}\alpha_h h \in \mathrm{Id}_{G} (\mathrm{UT}_3^{(-)},\Gamma(g,1)),\]
where $\alpha_h\in\mathbb{K}$. Either $f\in\mathcal{L}(Y)$ or $f$ is a linear combination of commutators of type $1$ and $2$. The result is already known if $f\in\mathcal{L}(Y)$. So, assume that $f$ is a linear combination of polynomials of type $1$ and $2$, and let $z$ be the variable of non-trivial degree that appears in the polynomials of $S_0$.

Consider the evaluation $z\mapsto  e_{12}$ and $y_i\mapsto \xi_i e_{22}+ \xi'_i e_{33}$, for $i\in\mathbb{N}$. Then,
\[[z, y_1^{(k_1)},\ldots y_n^{(k_n)}]\mapsto \left(\prod\limits_{i=1}^n \xi_i^{k_i}\right) e_{12},\]
\[[m, z, y_{1}^{(q_1)},\ldots, y_{n}^{(q_n)}]\mapsto 0.\]

Since the above polynomials are linearly independent, we get that $f$ may be assumed as a linear combination of commutators of type 2. Thus, the result follows from \Cref{liconjuntopoltipo2}.
\end{proof}

\noindent\textbf{Problem.} Find a basis of $\mathrm{Id}_G(\mathrm{UT}_3^{(-)}(\mathbb{K}),\Gamma(g,1))$ when $\mathbb{K}$ is a finite field. Find a basis of the respective relatively free algebra.

Finally, we shall study the Specht property for the variety of graded Lie algebras generated by $\mathrm{UT}_3^{(-)}$ endowed with the Remaining grading when the base field $\mathbb{K}$ is infinite.
\begin{definition}
A basic polynomial is a multihomogeneous polynomial that is a linear combination of commutators of type $1$ and $2$. A polynomial of type 2 is a multihomogeneous polynomial that is a linear combination of commutators of type $2$. 
\end{definition}

We define the following sets
\[\mathcal{H}^{(1,2)}= \{f\mid\textrm{$f$ is a basic polynomial, $\deg f\geq 1$} \},\]
\[\mathcal{H}^{(2)}= \{ f\mid\textrm{$f$ is a polynomial of type $2$, $\deg f\geq 1$}\},\]
\[\mathcal{H}= \mathcal{H}^{(1,2)}\setminus \mathcal{H}^{(2)}.\]
\begin{remark}\label{polynomiosH}
If $f\in\mathcal{H}$, then $f$ is a multihomogeneous polynomial in the variables $z$, $y_1$, \dots, $y_n$, for some $n\geq 1$, and
\[f= \alpha c +g,\]
where $\alpha\in\mathbb{K}\setminus\{0\}$, $c$ is a commutator of type 1 and $g\in H^{(2)}$. We do not exclude the case where $g$ could be zero. If $g$ is non-zero, then $\deg_{y_i} c=\deg_{y_i} g$, for all $i\in\{1,\ldots, n\}$.
\end{remark}
\begin{remark}\label{basisUT2infinite}
A basis $\mathcal{B}$ of the relatively free algebra of $\mathrm{UT}_2(\mathbb{K})^{(-)}$, in the variables $Y$, when the base field is infinite is given by the following polynomials:
\[
[y_{i_1}, y_{i_2}, \ldots, y_{i_n}],
\]
where $n\geq 1$ and $i_1>i_2\leq\cdots\leq i_n$.
\end{remark}
Henceforth, we fix the basis of $\mathrm{UT}_2^{(-)}$ that was described in \Cref{basisUT2infinite}. Thus, a commutator of type $2$ has the form
\[c= [y_j, y_1^{(p_1)},\ldots, y_k^{(p_k)},\ldots y_j^{(p_j-1)},\ldots, y_n^{(p_n)}, z, y_1^{(q_1)},\ldots, y_n^{(q_n)}], \]
where $\deg_{y_s} m=p_s+q_s$ for $1\leq s\leq n$, $k=\min \{s\mid p_s\neq 0, 1\leq s\leq n-1 \}$ and $j>k$. We associate the following finite sequence 
\begin{equation}
S_c= ( k,j, ((p_1, k_1),\ldots, (p_n,k_n))).
\end{equation}
Note that $S_c\in\mathbb{N}\times\mathbb{N}\times D(\mathbb{N}_0 ^2)$. Now, define
\[\mathcal{S}=\{S_c\mid \textrm{$c$ is a commutator of type 2}\}.\]
Let $c_1$ and $c_2$ be commutators of type 2 and write
\[S_{c_1}=( k,j, ((p_1, k_1),\ldots, (p_n,k_n))), \]
\[S_{c_2}=( k',j', ((p'_1, k'_1),\ldots, (p'_m,k'_m))) .\]
We say that $S_{c_1}\preceq S_{c_2}$, if there exists an injective map $\psi:\mathbb{N}\to\mathbb{N}$ such that:  
\begin{itemize}
    \item[(i)] $\psi$ preserves the order, that is, if $u\leq v$ then $\psi(u)\leq \psi(v)$,
    \item[(ii)] $\psi(n)\leq m$,
    \item[(iii)] $\psi(k)=k'$, $\psi(j)=j'$,
    \item[(iv)] $p_i\leq p'_{\psi(i)}$ and $q_i\leq q'_{\psi(i)}$, for every $i\in \{1,\ldots,n\}$.
\end{itemize}
\begin{lemma}[{\cite[Theorem 5.13]{bahturin}}]\label{Sequenceorderemaining}
$(S,\preceq)$ is a partially well-ordered set.  
\end{lemma}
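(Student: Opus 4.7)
The plan is to reduce the claim to Higman's theorem (\Cref{Sequenceorder}) by embedding $S$ into the set of finite sequences over an appropriate auxiliary partially well-ordered set. The distinguished positions $k$ and $j$ in each element of $S$, together with the rigid constraints $\psi(k) = k'$ and $\psi(j) = j'$ in the definition of $\preceq$, must be encoded into the comparison of the sequence entries rather than carried separately.

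Concretely, I would introduce $L = \{\alpha, \beta, \gamma\}$, equipped with the discrete partial order in which $\ell \leq \ell'$ if and only if $\ell = \ell'$, and consider $M = \mathbb{N}_0^2 \times L$ with the product order. Since $\mathbb{N}_0^2$ is partially well-ordered by \cite[Corollary 5.11]{bahturin} and $L$ is finite, $M$ is partially well-ordered. Higman's theorem applied with $M$ in place of $\mathbb{N}_0^m$ (the result \cite[Theorem 4.2]{Higman} holds over any partially well-ordered alphabet) then yields that $D(M)$, the set of finite sequences over $M$ with the embedding order, is partially well-ordered.

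I would then define an injection $\iota : S \to D(M)$ by sending $S_c = (k, j, ((p_1, q_1), \ldots, (p_n, q_n)))$ to $(m_1, \ldots, m_n)$ with $m_k := (p_k, q_k, \alpha)$, $m_j := (p_j, q_j, \beta)$, and $m_i := (p_i, q_i, \gamma)$ for $i \notin \{k, j\}$. Since $k \neq j$ holds by definition of a commutator of type 2, this is well-defined and injective. The key verification is that $\iota$ reflects the order: if $\iota(S_{c_1}) \preceq \iota(S_{c_2})$ via some $\psi$, then the componentwise inequality $a_i \leq b_{\psi(i)}$ in $M$ forces label equality, which pins down $\psi(k) = k'$ and $\psi(j) = j'$ as the unique positions carrying the labels $\alpha$ and $\beta$; the remaining $\mathbb{N}_0^2$ comparisons match condition (iv). Conversely, any $\psi$ satisfying (i)--(iv) preserves the labels by construction. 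Hence $S_{c_1} \preceq S_{c_2}$ if and only if $\iota(S_{c_1}) \preceq \iota(S_{c_2})$ in $D(M)$, and the partial well-ordering of $D(M)$ transfers to $S$.

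The main obstacle is conceptual: choosing the discrete order on $L$ is what precisely encodes the rigid equalities $\psi(k) = k'$ and $\psi(j) = j'$ (rather than mere inequalities), which is what makes the order on $S$ genuinely stronger than the naive product order on $\mathbb{N} \times \mathbb{N} \times D(\mathbb{N}_0^2)$. Once this encoding is in place, the remainder is a direct application of Higman's theorem.
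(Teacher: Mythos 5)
Your argument is correct, but it takes a genuinely different route from the paper: the paper offers no proof at all, simply attributing the statement to \cite[Theorem 5.13]{bahturin}, whereas you give a self-contained reduction to Higman's theorem. Your labeling device --- augmenting each entry $(p_i,q_i)$ with a symbol from a three-element set $L$ carrying the discrete order, so that the rigid constraints $\psi(k)=k'$ and $\psi(j)=j'$ are forced by the requirement that labels be comparable, hence equal --- is exactly the right way to absorb the distinguished positions into the alphabet, and it is what makes $\preceq$ on $\mathcal{S}$ strictly finer than the naive product order. The verification that $\iota$ both preserves and reflects the order goes through: injectivity of $\psi$ together with $\psi(k)=k'$ and $\psi(j)=j'$ guarantees that unmarked positions land on unmarked positions in the forward direction, the discrete order on $L$ pins down $\psi(k)$ and $\psi(j)$ in the backward direction (note $k<j$, so the two marked positions are distinct), and order-reflection alone already suffices to transfer the partial well-ordering from $D(M)$ back to $\mathcal{S}$. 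The one point you should make explicit is that you apply Higman's theorem over the alphabet $M=\mathbb{N}_0^2\times L$, which is stronger than the version recorded in \Cref{Sequenceorder} (stated there only for $\mathbb{N}_0^m$); this is legitimate, since Higman's original result holds for finite sequences over any partially well-ordered set and $M$ is partially well-ordered as a finite product of partially well-ordered sets, but it should be invoked in that generality rather than cited as \Cref{Sequenceorder}. Your approach has the advantage of being verifiable within the paper; the cost is the extra (routine) justification of the general form of Higman's theorem.
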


\begin{lemma}\label{consquenciaremaining}
Let $c= [y_j, y_1^{(p_1)},\ldots y_j^{(p_j-1)},\ldots, y_n^{(p_n)}, z, y_1^{(q_1)},\ldots, y_n^{(q_n)}]$, then
\[f=[y_j, y_1^{(p_1)},\ldots,y_j^{(p_j-1)},\ldots, y_n^{(p_n)}, y,z, y_1^{(q_1)},\ldots, y_n^{(q_n)}]\in\langle c\rangle^{T_G}+ \mathrm{Id}_{G} (\mathrm{UT}_3^{(-)},\Gamma(g,1)).\] 
\end{lemma}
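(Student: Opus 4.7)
The plan is to show that $f$ is congruent to $[c,y]$ modulo $\langle c\rangle^{T_G}+\mathrm{Id}_G$, by performing one substitution of $z$ in $c$ and then a Jacobi-based permutation that moves $y$ past the tail variables. The crucial tool is the graded identity \eqref{remaining2}, namely $[z_1,[y_2,y_3],[y_4,y_5]]=0$. Write $m=[y_j,y_1^{(p_1)},\ldots,y_j^{(p_j-1)},\ldots,y_n^{(p_n)}]$ and $Y^{(q)}=y_1^{(q_1)},\ldots,y_n^{(q_n)}$, so that $c=[m,z,Y^{(q)}]$ and $f=[m,y,z,Y^{(q)}]$; since $|m|\ge 2$, one has $m\in[\mathcal{L}(Y),\mathcal{L}(Y)]$.

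First I would substitute $z\mapsto[z,y]$ in $c$: the result $c'=[m,[z,y],Y^{(q)}]$ lies in $\langle c\rangle^{T_G}$ because $[z,y]$ has degree $g$. By Jacobi, $[m,[z,y]]=[m,z,y]-[m,y,z]$, whence $c'=[m,z,y,Y^{(q)}]-f$. Therefore
\[ f\equiv[m,z,y,Y^{(q)}]\pmod{\langle c\rangle^{T_G}}, \]
so it remains to show that $[m,z,y,Y^{(q)}]\in\langle c\rangle^{T_G}+\mathrm{Id}_G(\mathrm{UT}_3^{(-)},\Gamma(g,1))$.

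For this I would establish, by induction on the length of a sequence $A'$ of trivial-degree variables, the auxiliary claim that $[[m,z,A'],W]\equiv 0\pmod{\mathrm{Id}_G(\mathrm{UT}_3^{(-)},\Gamma(g,1))}$ for every $W\in[\mathcal{L}(Y),\mathcal{L}(Y)]$. The base case $A'=\emptyset$ reduces to $-[z,m,W]$, which vanishes modulo $\mathrm{Id}_G$ thanks to the bilinear extension of \eqref{remaining2}: namely, $[z,U_1,U_2]\equiv 0$ for arbitrary $U_1,U_2\in[\mathcal{L}(Y),\mathcal{L}(Y)]$, and both $m$ and $W$ lie in the derived subalgebra. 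The inductive step uses the Jacobi derivation $[[B,y'],W]=[[B,W],y']+[B,[y',W]]$ with $B=[m,z,A'']$; both summands are killed by the inductive hypothesis, since $[y',W]$ again lies in $[\mathcal{L}(Y),\mathcal{L}(Y)]$.

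Applying the claim with $W=[y,y_i]$ shows that in $[m,z,y,Y^{(q)}]$ we may swap $y$ past each adjacent tail variable $y_i$ modulo $\mathrm{Id}_G$, because each discrepancy has the form $[[m,z,A'],[y,y_i]]$ for some prefix $A'$. Iterating such swaps transports $y$ to the far right and yields $[m,z,y,Y^{(q)}]\equiv[m,z,Y^{(q)},y]=[c,y]\pmod{\mathrm{Id}_G}$. Since $[c,y]\in\langle c\rangle^{T_G}$, combining the two steps gives $f\in\langle c\rangle^{T_G}+\mathrm{Id}_G(\mathrm{UT}_3^{(-)},\Gamma(g,1))$. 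The delicate step is setting up the auxiliary inductive claim; everything else consists of direct Jacobi manipulation and a single substitution.
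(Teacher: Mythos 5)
Your argument is correct and takes essentially the same route as the paper's proof: substitute $z\mapsto[z,y]$ in $c$, apply the Jacobi identity, and use identity \eqref{remaining2} to transport $y$ to the far right so that the remaining term is $[c,y]$. The paper states the resulting congruence in one line, while you additionally supply the inductive verification that $[[m,z,A'],W]$ vanishes modulo $\mathrm{Id}_G(\mathrm{UT}_3^{(-)},\Gamma(g,1))$ for $W$ in the derived subalgebra of $\mathcal{L}(Y)$ --- a detail the paper leaves implicit.
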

\begin{proof}
Substituting $z\mapsto [z,y]$ on the commutator $c$, we have that 
\begin{equation*}
\begin{split}
& -[y_j, y_1^{(p_1)},\ldots,y_j^{(p_j-1)},\ldots ,y_n^{(p_n)}, [z, y], y_1^{(q_1)},\ldots, y_n^{(q_n)}] \\
& + 
[y_j, y_1^{(p_1)},\ldots,y_j^{(p_j-1)}, \ldots, y_n^{(p_n)}, z, y_1^{(q_1)},\ldots, y_n^{(q_n)}, y] +\mathrm{Id}_{G} (\mathrm{UT}_3^{(-)},\Gamma(g,1))\\
& = f + \mathrm{Id}_{G} (\mathrm{UT}_3^{(-)},\Gamma(g,1))
\end{split}
\end{equation*}
and the result follows.
\end{proof}
\begin{lemma}\label{oderSc1Sc2}
Let $c_1$ and $c_2$ be commutators of type $2$. If $S_{c_1}\preceq S_{c_2}$, then $c_2\in \langle c_1\rangle^{T_G}+ \mathrm{Id}_{G} (\mathrm{UT}_3^{(-)},\Gamma(g,1))$.
\end{lemma}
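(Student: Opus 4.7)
The plan is to follow the template of \Cref{oderSfSg} and \Cref{oderseqparesalmostuniversal}, using \Cref{consquenciaremaining} as the key new tool for manipulating the $m$-part of type 2 commutators. By hypothesis, there is an order-preserving injection $\psi:\mathbb{N}\to\mathbb{N}$ with $\psi(k)=k'$, $\psi(j)=j'$, $\psi(n)\le m$, and $p_i\le p'_{\psi(i)}$, $q_i\le q'_{\psi(i)}$ for $1\le i\le n$. The first move is the relabeling substitution $y_i\mapsto y_{\psi(i)}$ applied to $c_1$, producing
\[
\tilde c_1 = [y_{j'},y_{\psi(1)}^{(p_1)},\ldots,y_{j'}^{(p_j-1)},\ldots,y_{\psi(n)}^{(p_n)},z,y_{\psi(1)}^{(q_1)},\ldots,y_{\psi(n)}^{(q_n)}]\in \langle c_1\rangle^{T_G},
\]
which is a type 2 commutator with the correct leading indices thanks to $\psi$ being order-preserving and sending $k,j$ to $k',j'$.

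Next, I would extend the $z$-tail of $\tilde c_1$ to match that of $c_2$ by iterated substitutions $z\mapsto [z,y_a]$, performed for each occurrence of $y_a$ to be inserted (either $y_a=y_{\psi(i)}$, with multiplicity $q'_{\psi(i)}-q_i$, or $y_a=y_s$ for $s\notin\psi(\mathbb{N})$, with multiplicity $q'_s$). By Jacobi,
\[
[m,[z,y_a],\text{tail}] \;=\; [m,z,y_a,\text{tail}] \;-\; [m,y_a,z,\text{tail}],
\]
and the second summand is exactly a type 2 commutator with $y_a$ inserted at the end of the $m$-part, hence by \Cref{consquenciaremaining} it lies in $\langle c_1\rangle^{T_G}+\mathrm{Id}_G$. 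Consequently the first summand, the desired $z$-tail extension, also lies in this ideal. Iterating, and choosing the insertion order so as to respect the non-decreasing arrangement of $c_2$'s $z$-tail, constructs the full $z$-tail of $c_2$. The $m$-part is then extended by iterated applications of \Cref{consquenciaremaining} inserting the required $y$'s just before $z$, and the resulting $m$-part is reduced modulo $\mathrm{Id}(\mathrm{UT}_2^{(-)})\cap\mathcal{L}(Y)\subseteq\mathrm{Id}_G$ (by \Cref{baseremaininggrading}) into canonical $\mathcal{B}$-basis form to match $c_2$.

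The main obstacle is this final rearrangement. Since \Cref{consquenciaremaining} inserts $y$'s only at the end of the $m$-part, but the canonical $\mathcal{B}$-form requires the variables in non-decreasing index order throughout, reducing the accumulated $m$-part via the $\mathrm{UT}_2^{(-)}$-basis identities may produce additional basis elements with leading indices other than $j'$, giving rise to extra type 2 commutators different from $c_2$. Controlling these correction terms will likely require either choosing the order of insertions so that the $m$-part stays in non-decreasing form at each step, or performing a careful induction on the sequence $S_c$ using the well-ordering from \Cref{Sequenceorderemaining} to absorb the corrections into $\langle c_1\rangle^{T_G}+\mathrm{Id}_G$.
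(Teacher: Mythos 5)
Your proposal follows the same route as the paper's proof, which is itself only a two-line sketch: relabel the variables through the order-preserving injection $\psi$, enlarge the $z$-tail by substitutions $z\mapsto[z,y_a]$ combined with the Jacobi identity, and enlarge the $m$-part through \Cref{consquenciaremaining}, exactly as in the template of \Cref{oderseqfe1e2}. The individual steps you carry out (the relabeling preserving the canonical shape because $\psi$ is order-preserving and sends $k,j$ to $k',j'$; the identity $[m,[z,y_a],\mathrm{tail}]=[m,z,y_a,\mathrm{tail}]-[m,y_a,z,\mathrm{tail}]$; the use of \Cref{consquenciaremaining} to absorb the second summand) are all correct and are what the paper intends.

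The one point you leave open --- controlling the correction terms that arise when the inserted variables are sorted into position --- is indeed the crux, but neither of the two workarounds you propose is needed: the corrections simply vanish modulo $\mathrm{Id}_G(\mathrm{UT}_3^{(-)},\Gamma(g,1))$. Every swap of adjacent trivial-degree variables produces a term of the form $[w,[y_a,y_b],\ldots]$, where $w$ is either an initial segment of the $m$-part of length at least $2$ or an initial segment containing $m$ and $z$. In the first case $[w,[y_a,y_b]]$ is a substitution instance of the metabelian identity $[[y_1,y_2],[y_3,y_4]]$. In the second case, pushing $[y_a,y_b]$ inward with the Jacobi identity reduces everything to brackets $[[m,d],z,\ldots]$ and $[m,[z,d],\ldots]$ with $m,d$ commutators of length at least $2$ in the $y$'s; the former is again an instance of the metabelian identity and the latter an instance of $[z_1,[y_2,y_3],[y_4,y_5]]$. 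So no ``extra type 2 commutators with leading indices other than $j'$'' survive. The conditions $\psi(k)=k'$ and $\psi(j)=j'$ built into the definition of $\preceq$ are precisely what guarantees that no inserted variable ever has to migrate into one of the first two positions of the $m$-part (the only place where the metabelian identity would not apply), since every index to be inserted is at least $k'$ and the leading variable $y_{j'}$ is already in place. With this observation your argument closes and coincides with the paper's.
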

\begin{proof}
It follows from \Cref{consquenciaremaining} and the same idea as in the proof of \Cref{oderseqfe1e2}.
\end{proof}

\begin{definition}\label{orderlinearmonomiosremaining}
Consider two commutators of type 2 with the same multidegree
\[c_1=  [y_j, y_1^{(p_1)},\ldots, y_k^{(p_k)},\ldots y_j^{(p_j-1)},\ldots, y_n^{(p_n)}, z, y_1^{(q_1)},\ldots, y_n^{(q_n)}],\]
\[c_2= [y_{j'}, y_1^{(p'_1)},\ldots, y_{k'}^{(p'_{k'})},\ldots y_{j'}^{(p'_{j'}-1)},\ldots, y_n^{(p'_n)}, z, y_1^{(q'_1)},\ldots, y_n^{(q'_n)}].\]
We say that $c_1<' c_2$ if either
\begin{itemize}
\item[(a)] $k<k'$, or
\item[(b)] $k=k'$ and $j<j'$, or
\item[(c)] $k=k'$, $j=j'$ and $(q'_1,\ldots, q'_n)<_{l} (q_1,\ldots, q_n)$.
\end{itemize}
Here, $<_l$ is the lexicographic order on $\mathbb{N}_0^n$.
\end{definition}
Observe that $<'$ is a total ordering on the set of commutator of type 2 that have the same multidegree.
\begin{definition}
Let
\[f=\sum\limits_{i=1}^r \alpha_i c_i(z,y_1,\ldots, y_n),\]
where $\alpha_i\in\mathbb{K}\setminus\{0\}$ and $c_i$ is a commutator of type $2$, for $i\in\{1,\cdots, r\}$. The leading monomial of $f$ with respect to the order $<'$ is
\[\mu(f)= \max_{<'} \{c_i\mid 1\leq i\leq r\}.\]
\end{definition}
\begin{lemma}\label{ordenlinerremaining}
 Let $c_1$, $c_2$ be two commutator of type $2$ with the same multidegree, and write
\[c_1=  [y_j, y_1^{(p_1)},\ldots,y_j^{(p_j-1)},\ldots, y_n^{(p_n)}, z, y_1^{(q_1)},\ldots, y_n^{(q_n)}],\]
\[c_2= [y_{j'}, y_1^{(p'_1)},\ldots,y_{j'}^{(p'_{j'}-1)},\ldots, y_n^{(p'_n)}, z, y_1^{(q'_1)},\ldots, y_n^{(q'_n)}].\]
Assume that $c_1<' c_2$. Then
\renewcommand{\labelenumi}{(\roman{enumi})}
\begin{enumerate}
    \item \begin{equation*}
     \begin{split}   
    [y_j, y_1^{(p_1)},\ldots, y_j^{(p_j-1)},\ldots, y_n^{(p_n)}, z, y_1^{(q_1)},\ldots, y_i^{(q_i+1)},\ldots, y_n^{(q_n)}] < ' \\
    [y_{j'}, y_1^{(p'_1)},\ldots,y_{j'}^{(p'_{j'}-1)},\ldots, y_n^{(p'_n)}, z, y_1^{(q'_1)},\ldots, y_i^{(q'_i+1)} \ldots, y_n^{(q'_n)}],
    \end{split}
    \end{equation*}
for $i\in\{1,\ldots, n\}$.
\item  \begin{equation*}
     \begin{split}   
    [y_j, y_1^{(p_1)},\ldots, y_j^{(p_j-1)},\ldots, y_n^{(p_n)}, z, y_1^{(q_1)},\ldots, y_n^{(q_n)}, y_{n+1}] <' \\
    [y_{j'}, y_1^{(p'_1)},\ldots,  y_{j'}^{(p'_{j'}-1)},\ldots, y_n^{(p'_n)}, z, y_1^{(q'_1)},\ldots, y_n^{(q'_n)}, y_{n+1}].
    \end{split}
    \end{equation*}
\item  Consider 
\begin{equation*}
\begin{split}
c_1' & = -[y_j, y_1^{(p_1)},\ldots,  y_t^{(p_t+1)},\ldots, y_n^{(p_n)}, z, y_1^{(q_1)},\ldots, y_t^{(q_t)},\ldots, y_n^{(q_n)}]\\
& + [y_j, y_1^{(p_1)},\ldots,y_t^{(p_t)},\ldots, y_n^{(p_n)}, z, y_1^{(q_1)},\ldots, y_{t}^{(q_t+1)},\ldots ,y_n^{(q_n)}],
\end{split}
\end{equation*}
\begin{equation*}
\begin{split}
c_2'& = -[y_{j'}, y_1^{(p'_1)},\ldots, y_{t}^{(p'_t+1)} ,\ldots, y_n^{(p'_n)}, z, y_1^{(q'_1)},\ldots, y_t^{(q_t')},\ldots,y_n^{(q'_n)}]\\
& +[y_{j'}, y_1^{(p'_1)},\ldots, y_{t}^{(p'_t)} ,\ldots, y_n^{(p'_n)}, z, y_1^{(q'_1)},\ldots,y_{t}^{(q'_t+1)},\ldots,y_n^{(q'_n)}],
\end{split}  
\end{equation*}
where $k'\leq t\leq n$.  Then
\[\mu(c_1')= [y_j, y_1^{(p_1)},\ldots, y_t^{(p_t+1)},\ldots, y_n^{(p_n)}, z, y_1^{(q_1)},\ldots, y_t^{(q_t)},\ldots, y_n^{(q_n)}] ,\]
\[\mu(c_2')= [y_{j'}, y_1^{(p'_1)},\ldots,  y_{t}^{(p'_t+1)} ,\ldots, y_n^{(p'_n)}, z, y_1^{(q'_1)},\ldots,y_t^{(q'_t)},\ldots, y_n^{(q'_n)}],\]
and $\mu(c_1')<'\mu(c_2')$.
\item  
 Let
\begin{equation*}
\begin{split}
c_1'' & = -[y_j, y_1^{(p_1)},\ldots, y_n^{(p_n)}, y_{n+1}, z, y_1^{(q_1)}, \ldots, y_n^{(q_n)}]\\
& + [y_j, y_1^{(p_1)},\ldots, y_n^{(p_n)}, z, y_1^{(q_1)},\ldots ,y_n^{(q_n)}, y_{n+1}]
\end{split}
\end{equation*}
\begin{equation*}
\begin{split}
c_2''& = -[y_{j'}, y_1^{(p'_1)},\ldots, y_n^{(p'_n)}, y_{n+1}, z, y_1^{(q'_1)},\ldots, y_n^{(q'_n)}];\\
& +[y_{j'}, y_1^{(p'_1)},\ldots,y_n^{(p'_n)}, z, y_1^{(q'_1)},\ldots,y_n^{(q'_n)}, y_{n+1}].
\end{split}  
\end{equation*}
Then
\[\mu(c_1'')= [y_j, y_1^{(p_1)},\ldots, y_n^{(p_n)}, y_{n+1}, z, y_1^{(q_1)},\ldots, y_t^{(q_t)},\ldots, y_n^{(q_n)}],\]
\[\mu(c_2'')= [y_{j'}, y_1^{(p'_1)},\ldots, y_n^{(p'_n)}, y_{n+1}, z, y_1^{(q'_1)},\ldots,y_t^{(q'_t)},\ldots, y_n^{(q'_n)}], \]
and $\mu(c_1'')<'\mu(c_2'')$.
\end{enumerate}
\end{lemma}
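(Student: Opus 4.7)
The plan is to verify each of the four claims by tracking how the three ingredients of the order $<'$ --- the minimal index $k$ of a nonzero $p_s$ with $s \leq n-1$, the head index $j$, and the reverse-lex comparison on the $q$-vector --- transform under the operations performed. First I would dispose of the two easy items.

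For (i), incrementing $q_i$ by $1$ on both commutators leaves $k$ and $j$ untouched, and adding $1$ to the same coordinate on both sides preserves the lex comparison of the two $q$-vectors. For (ii), appending $y_{n+1}$ at the end amounts to extending each $q$-vector by $1$ at position $n+1$ while keeping the new $p_{n+1}$ equal to $0$; since the original $k$ is at most $n-1$, the new minimum (now taken over $s \leq n$) coincides with the old $k$, and $j$ is also preserved. In either case, whichever of the clauses (a), (b), (c) of \Cref{orderlinearmonomiosremaining} witnessed $c_1 <' c_2$ continues to witness the analogous inequality, and I would spend only a sentence on each item.

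The substance of the lemma is in (iii) and (iv), which I would handle in two steps each: first identify the leading monomial of the modified polynomial, then compare the two leading monomials. For (iii), the two summands of $c_1'$ share the same $k$, because the hypothesis $t \geq k' \geq k$ prevents the update $p_t \mapsto p_t + 1$ from introducing a new minimum below $k$; they clearly share the same $j$. They differ only in their $q$-vectors: the second summand has $q_t$ increased by $1$, which makes its $q$-vector lexicographically larger and hence $<'$-smaller because of the reversal in clause (c) of \Cref{orderlinearmonomiosremaining}. Therefore $\mu(c_1')$ is the first summand, and by the identical argument so is $\mu(c_2')$. The inequality $\mu(c_1') <' \mu(c_2')$ then reduces verbatim to the given $c_1 <' c_2$, since the two leading monomials retain the same $k$, $j$, and $q$-vectors as $c_1$ and $c_2$.

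For (iv), the two summands of $c_1''$ correspond to inserting $y_{n+1}$ just before $z$ (giving $p_{n+1} = 1$, $q_{n+1} = 0$) or at the very end (giving $p_{n+1} = 0$, $q_{n+1} = 1$). In both summands $k$ is preserved because the old $k$ is already at most $n - 1$, and $j$ is preserved trivially. The two $q$-vectors differ only in the last coordinate, $0$ versus $1$; the second is lex-larger, hence $<'$-smaller, so the first summand is the leading monomial, as claimed, and the same goes for $c_2''$. The inequality $\mu(c_1'') <' \mu(c_2'')$ then follows from $c_1 <' c_2$ because appending a common $0$ at the end of both $q$-vectors preserves the lex comparison. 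The only obstacle, if any, is the bookkeeping needed to confirm that the $p$-modifications never shift $k$; this is guaranteed by the bounds $t \geq k'$ in (iii) and $k \leq n - 1$ in (iv), after which everything becomes a mechanical check.
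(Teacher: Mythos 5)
Your case-by-case verification is correct: the key observations (that $k$ and $j$ are preserved because $t\ge k'\ge k$ in (iii) and because the new index $n+1$ cannot become the minimum in (ii) and (iv), and that the reversal in clause (c) of \Cref{orderlinearmonomiosremaining} makes the summand with the lex-larger $q$-vector the $<'$-smaller one) are exactly what is needed. The paper itself dismisses this lemma with ``the proof is straightforward,'' so your argument is simply the intended direct check written out in full.
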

\begin{proof}
 The proof is straightforward.
\end{proof}
\begin{lemma}\label{consecompatibleremaining}
Let $f$, $h\in\mathcal{H}^{(2)}$ and suppose that $S_{\mu(f)}\preceq S_{\mu(h)}$. Then, there exists $h'\in\mathcal{H}^{(2)}$ such that  $h'\in\langle f\rangle^{T_G}+\mathrm{Id}_G(\mathrm{UT}_3^{(-)},\Gamma(g,1))$ and $\mu(h)=\mu(h')$.
\end{lemma}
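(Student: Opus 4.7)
The plan is to mirror the proof of \Cref{Consequenceuniversal}, substituting \Cref{compativel} by \Cref{ordenlinerremaining} and using \Cref{consquenciaremaining} whenever new trivial-degree variables must be created between the base monomial $m$ and the occurrence of $z$ in a commutator of type $2$.

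First, I would read off from the hypothesis $S_{\mu(f)}\preceq S_{\mu(h)}$ the order-preserving injection $\sigma$ described in the paragraph preceding \Cref{Sequenceorderemaining}, and use the construction sketched in the proof of \Cref{oderSc1Sc2} to build a $T_G$-substitution $\Psi:\mathcal{L}(X^G)\to\mathcal{L}(X^G)$ together with a finite sequence of trivial-degree variables $w_1,\ldots,w_\ell\in Y$ such that
\[
[\Psi(\mu(f)),w_1,\ldots,w_\ell]=\mu(h).
\]
The substitution $\Psi$ is assembled from three elementary moves: (a) the relabelling $y_i\mapsto y_{\sigma(i)}$; (b) substitutions of the form $z\mapsto [z,y]$ (or $y_i\mapsto [y_i,y]$), invoked via \Cref{consquenciaremaining}, which raise each pair $(p_i,q_i)$ up to $(p'_{\sigma(i)},q'_{\sigma(i)})$; and (c) right commutation by $w_1,\ldots,w_\ell$, which inserts the ``missing'' trivial-degree variables indexed by $\{1,\ldots,m\}\setminus\sigma(\{1,\ldots,n\})$. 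Because $\sigma(k)=k'$ and $\sigma(j)=j'$, the outer shape required by \Cref{comutadorestipo2} is preserved.

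I would then set $h':=[\Psi(f),w_1,\ldots,w_\ell]$, so that $h'\in\langle f\rangle^{T_G}+\mathrm{Id}_G(\mathrm{UT}_3^{(-)},\Gamma(g,1))$ and $h'\in\mathcal{H}^{(2)}$ are both automatic. The substance of the lemma reduces to the equality $\mu(h')=\mu(h)$. Writing $f=\alpha\mu(f)+\sum_s\alpha_s c_s$ with each $c_s<'\mu(f)$, I would verify this by showing inductively that each elementary move of type (a), (b) or (c) sends a strict inequality $c<'c_0$ between type-$2$ commutators of the same multidegree to a strict inequality between the leading monomials of the respective outputs. Parts (i) and (ii) of \Cref{ordenlinerremaining} cover the simple cases (raising a single $q_i$ and appending an extra trivial-degree variable on the right), while parts (iii) and (iv) handle the genuinely two-term outputs produced by (b).

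The main obstacle is precisely this compatibility under move (b): an application of \Cref{consquenciaremaining} replaces a commutator of type $2$ by a difference of two such commutators, and one must track the leading monomials of both to ensure that strict comparability in $<'$ is preserved. Parts (iii) and (iv) of \Cref{ordenlinerremaining} are tailored to exactly this bookkeeping; once they are invoked iteratively along the decomposition of $\Psi$, every smaller term $c_s$ transforms into a polynomial whose leading monomial is strictly below $\mu(h)$, forcing $\mu(h')=\mu(h)$ as required.
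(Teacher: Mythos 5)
Your proposal follows essentially the same route as the paper, which proves this lemma by mirroring \Cref{Consequenceuniversal} with \Cref{oderseqfe1e2} replaced by \Cref{oderSc1Sc2} and \Cref{compativel} by \Cref{ordenlinerremaining}, the only twist being that the raising substitutions now produce linear combinations of type-$2$ commutators rather than single monomials. Note only that your displayed identity $[\Psi(\mu(f)),w_1,\ldots,w_\ell]=\mu(h)$ should read ``has leading monomial $\mu(h)$'' modulo the $T_G$-ideal --- an imprecision you already correct in your subsequent discussion of move (b).
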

\begin{proof}
The proof is similar to \Cref{Consequenceuniversal}, taking a linear combination of evaluations followed by a product of variables of trivial degree instead of a single one; and replacing \Cref{oderseqfe1e2} by  \Cref{oderSc1Sc2} and \Cref{compativel} by \Cref{ordenlinerremaining}.
\end{proof}
\begin{lemma}\label{finitoH2}
Let $J$ be a $T_G$-ideal such that  $\mathrm{Id}_G(\mathrm{UT}_3^{(-)},\Gamma(g,1))\subseteq J$. Consider the set
\[\widetilde{\mathcal{H}}^{(2)}=\mathcal{H}^{(2)}\cap J.\]
Then, there exists a finite subset $\widetilde{H}_0\subseteq \widetilde{\mathcal{H}}^{(2)}$ such that
\[\widetilde{\mathcal{H}}^{(2)}\subseteq \langle \widetilde{H}_0\rangle^{T_G} + \mathrm{Id}_G(\mathrm{UT}_3^{(-)},\Gamma(g,1)). \]
\end{lemma}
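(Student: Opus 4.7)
The plan is to mimic the well-ordering argument used in \Cref{finitoe1e2}, replacing the data for the universal grading by the partially well-ordered set $(\mathcal{S},\preceq)$ of \Cref{Sequenceorderemaining}, the total order $<'$ on commutators of type $2$ of fixed multidegree from \Cref{orderlinearmonomiosremaining}, the leading-monomial operator $\mu$, and the compatibility results \Cref{oderSc1Sc2,ordenlinerremaining,consecompatibleremaining}.

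First I would collect the leading monomials by setting $\mathcal{M}^{(2)}:=\{\mu(f)\mid f\in\widetilde{\mathcal{H}}^{(2)}\}$ and $\mathcal{V}^{(2)}:=\{S_{\mu(f)}\mid f\in\widetilde{\mathcal{H}}^{(2)}\}\subseteq\mathcal{S}$. By \Cref{Sequenceorderemaining}, the set $\mathcal{V}^{(2)}$ has only finitely many minimal elements $S_{\mu(f_1)},\ldots,S_{\mu(f_t)}$. I then put $\widetilde{H}_0:=\{f_1,\ldots,f_t\}$ and claim that this is the required finite set.

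Suppose for contradiction that some $f\in\widetilde{\mathcal{H}}^{(2)}$ lies outside $\langle\widetilde{H}_0\rangle^{T_G}+\mathrm{Id}_G(\mathrm{UT}_3^{(-)},\Gamma(g,1))$. Fix the multidegree $(d_1,\ldots,d_n)$ in the trivial-degree variables of $f$ and consider the non-empty set $A^{(d_1,\ldots,d_n)}$ of all $h\in\widetilde{\mathcal{H}}^{(2)}$ of this multidegree that do not lie in $\langle\widetilde{H}_0\rangle^{T_G}+\mathrm{Id}_G(\mathrm{UT}_3^{(-)},\Gamma(g,1))$. Since $<'$ is a total order on the finite set of commutators of type $2$ with a given multidegree, we may replace $f$ by an element of $A^{(d_1,\ldots,d_n)}$ whose leading monomial is $<'$-minimal among $\{\mu(h)\mid h\in A^{(d_1,\ldots,d_n)}\}$. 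By the minimality of $\widetilde{H}_0$ inside $\mathcal{V}^{(2)}$, some $f_i\in\widetilde{H}_0$ satisfies $S_{\mu(f_i)}\preceq S_{\mu(f)}$, and \Cref{consecompatibleremaining} then produces $h'\in\langle f_i\rangle^{T_G}+\mathrm{Id}_G(\mathrm{UT}_3^{(-)},\Gamma(g,1))\subseteq\langle\widetilde{H}_0\rangle^{T_G}+\mathrm{Id}_G(\mathrm{UT}_3^{(-)},\Gamma(g,1))$ with $\mu(h')=\mu(f)$.

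After rescaling so that the coefficients of $\mu(f)$ in $h'$ and in $f$ coincide, the polynomial $\widehat{f}:=h'-f$ no longer contains the monomial $\mu(f)$. If $\widehat{f}\in\mathrm{Id}_G(\mathrm{UT}_3^{(-)},\Gamma(g,1))$, then $f\in\langle\widetilde{H}_0\rangle^{T_G}+\mathrm{Id}_G(\mathrm{UT}_3^{(-)},\Gamma(g,1))$, contradicting the choice of $f$; otherwise $\widehat{f}\in A^{(d_1,\ldots,d_n)}$ and $\mu(\widehat{f})<'\mu(f)$, contradicting the $<'$-minimal choice of $\mu(f)$. The main obstacle is making sure that the compatibility lemma \Cref{consecompatibleremaining} actually produces $h'$ with $\mu(h')$ exactly equal to $\mu(f)$ and not merely dominated by it; this is precisely what \Cref{ordenlinerremaining} is designed to guarantee, so the remainder is a routine transplant of the scheme of \Cref{finitoe1e2}.
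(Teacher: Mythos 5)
Your proposal is correct and follows essentially the same route as the paper, which simply invokes the scheme of \Cref{finitoe1e2} together with \Cref{Sequenceorderemaining,oderSc1Sc2,consecompatibleremaining}; you have merely written out the minimal-counterexample argument explicitly. The only cosmetic point is that in the final dichotomy the relevant alternative is $\widehat{f}\in\langle\widetilde{H}_0\rangle^{T_G}+\mathrm{Id}_G(\mathrm{UT}_3^{(-)},\Gamma(g,1))$ rather than $\widehat{f}\in\mathrm{Id}_G(\mathrm{UT}_3^{(-)},\Gamma(g,1))$, but either way the contradiction is immediate.
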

\begin{proof}
The result follows from the ideas developed in the proof of \Cref{finitoe1e2} and from \Cref{Sequenceorderemaining,oderSc1Sc2,consecompatibleremaining}.
\end{proof}
Let $f\in\mathcal{H}$ and write
\[f= \alpha A(f) + g, \]
where $\alpha\in\mathbb{K}\setminus\{0\}$, $A(f)=[z, y_1^{(k_1)},\ldots, y_n^{(k_n)}]$ and $g\in\mathcal{H}^{(2)}$. Then, for each $f$, we associate the finite sequence $C_{f}=(k_1,\ldots, k_n)$. Let 
\[\mathcal{C}=\{ C_{f}\mid f\in\mathcal{H}\}.\]
Since $\mathcal{C}\subseteq D(\mathbb{N}_0)$ (see \Cref{Sequenceorder}), we induce the ordering of $D(\mathbb{N}_0)$ on $\mathcal{C}$. Hence, we have:
\begin{lemma}\label{seqsimplesremaining}
$(\mathcal{C},\preceq ')$ is a partially well-ordered set.\qed
\end{lemma}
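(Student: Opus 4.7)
The plan is to observe that this lemma is essentially an immediate corollary of \Cref{Sequenceorder} (Higman's theorem) specialized to $m=1$. By construction, each $C_f=(k_1,\ldots,k_n)$ is a finite sequence of non-negative integers, so $\mathcal{C}\subseteq D(\mathbb{N}_0)$. Moreover, the ordering $\preceq'$ on $\mathcal{C}$ is declared to be the restriction of the Higman ordering $\preceq$ from $D(\mathbb{N}_0)$; nothing additional is demanded of $\preceq'$.

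First I would invoke \Cref{Sequenceorder} with $m=1$ to conclude that $(D(\mathbb{N}_0),\preceq)$ is partially well-ordered, i.e.~satisfies the finite basis property (every non-empty subset has at least one, and only finitely many up to equivalence, minimal elements; equivalently every infinite sequence has an infinite non-decreasing subsequence). Then I would appeal to the standard fact that the restriction of a partially well-ordered relation to any subset remains partially well-ordered: if $(P,\leq)$ is partially well-ordered and $Q\subseteq P$, then any infinite sequence in $Q$ is in particular an infinite sequence in $P$, hence contains an infinite non-decreasing subsequence, which lies in $Q$. Applying this with $P=D(\mathbb{N}_0)$ and $Q=\mathcal{C}$ gives the claim.

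There is no real obstacle here; the entire content of the lemma is that the notation $\preceq'$ refers to a hereditarily-partially-well-ordered relation, and the previous definitions have arranged matters so that $\preceq'$ is literally the restriction of the Higman order. Thus the proof is a one-line citation of \Cref{Sequenceorder} followed by the remark that subsets of partially well-ordered sets inherit the property.
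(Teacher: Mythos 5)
Your argument is correct and matches the paper exactly: the paper also treats this lemma as immediate (note the \qed in the statement), since $\mathcal{C}\subseteq D(\mathbb{N}_0)$ carries the order induced from Higman's theorem (\Cref{Sequenceorder} with $m=1$) and subsets of partially well-ordered sets inherit the property. Nothing further is needed.
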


\begin{lemma}\label{consequenciastipo1remaining}
Let $f, g\in\mathcal{H}$ and suppose that $C_{f}\preceq ' C_{g}$. Then, there exists $h\in\mathcal{H}$ such that  \[h\in\langle f\rangle^{T_G}+ \mathrm{Id}_G(\mathrm{UT}_3^{(-)},\Gamma(g,1))\] and $A(g)=A(h)$.
\end{lemma}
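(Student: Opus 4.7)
The plan is to mirror the argument of \Cref{Consequenceuniversal}: I construct a substitution that carries the type-$1$ leading term of $f$ to $A(g)$ and argue that the type-$2$ remainder stays in $\mathcal{H}^{(2)}$. Write $f = \alpha A(f) + f_2$ with $A(f) = [z, y_1^{(k_1)}, \ldots, y_n^{(k_n)}]$, $\alpha \in \mathbb{K}\setminus\{0\}$, and $f_2 \in \mathcal{H}^{(2)}$, and let $A(g) = [z, y_1^{(k'_1)}, \ldots, y_m^{(k'_m)}]$. Fix an order-preserving injection $\psi_0$ witnessing $C_f \preceq' C_g$, so that $k_i \leq k'_{\psi_0(i)}$, and set $\{j_1 < \cdots < j_{m-n}\} = \{1,\ldots,m\} \setminus \psi_0(\{1,\ldots,n\})$.

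Next, I would consider the $G$-graded endomorphism $\phi$ of $\mathcal{L}(X^G)$ defined by $y_i \mapsto y_{\psi_0(i)}$ for $i \in \{1,\ldots,n\}$,
\[
z \mapsto \bigl[z, y_{\psi_0(1)}^{(k'_{\psi_0(1)}-k_1)}, \ldots, y_{\psi_0(n)}^{(k'_{\psi_0(n)}-k_n)}, y_{j_1}^{(k'_{j_1})}, \ldots, y_{j_{m-n}}^{(k'_{j_{m-n}})}\bigr],
\]
and the identity on the remaining variables, so that $\phi(f) \in \langle f \rangle^{T_G}$. A direct count shows that $\phi(A(f))$ is a single type-$1$ commutator in which each $y_i$ ($1 \leq i \leq m$) appears with total multiplicity exactly $k'_i$, though not in sorted order. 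The reduction procedure from the proof of \Cref{conjuntogeradorremaining} — adjacent swaps combined with the identity \eqref{remaining2} and Jacobi — then rewrites $\phi(A(f)) \equiv A(g) + h'_2$ modulo $\mathrm{Id}_G(\mathrm{UT}_3^{(-)},\Gamma(g,1))$ for some $h'_2 \in \mathcal{H}^{(2)}$, because each adjacent swap preserves the coefficient of the sorted type-$1$ monomial and contributes only type-$2$ corrections. In parallel, each summand $[m_i, z, y_1^{(q^i_1)}, \ldots]$ of $f_2$ (with $m_i \in \mathcal{B}$ and $|m_i| \geq 2$) maps under $\phi$ to a polynomial of the form $[m_i', [z, \cdots], \cdots]$ with $|m_i'| \geq 2$, and iterated Jacobi reduces it to a linear combination of type-$2$ commutators. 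Combining, $\phi(f) \equiv \alpha A(g) + h_2 \pmod{\mathrm{Id}_G(\mathrm{UT}_3^{(-)},\Gamma(g,1))}$ for some $h_2 \in \mathcal{H}^{(2)}$, and taking $h := \alpha^{-1}\phi(f)$ will yield the desired element of $\mathcal{H}$ with $A(h) = A(g)$.

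The main subtlety I expect is ensuring that no cancellation occurs between the type-$1$ contributions of $\phi(A(f))$ and $\phi(f_2)$. This is handled by \Cref{baseremaininggrading}: the type-$1$ and type-$2$ commutators together form a basis of the relatively free algebra modulo $\mathrm{Id}_G(\mathrm{UT}_3^{(-)},\Gamma(g,1))$, so the type-$1$/type-$2$ decomposition of each multihomogeneous polynomial is unique. The explicit calculations above then pin down the type-$1$ component of $\phi(f)$ as exactly $\alpha A(g)$, justifying the normalization.
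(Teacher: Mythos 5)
Your proposal is correct and follows essentially the same strategy as the paper's proof: realize the embedding witnessing $C_f\preceq' C_g$ by a $T_G$-substitution, re-straighten via the reduction of \Cref{conjuntogeradorremaining} so that the type-$1$ component becomes exactly $\alpha A(g)$ while all corrections land in $\mathcal{H}^{(2)}$, and conclude by the linear independence of type-$1$ and type-$2$ commutators. The only (immaterial) difference is that you absorb the extra occurrences of the $y$'s into the substitution $z\mapsto[z,y_{\psi_0(1)}^{(k'_{\psi_0(1)}-k_1)},\ldots]$, whereas the paper substitutes $y_i\mapsto y_{s_i}$ and then multiplies on the right by the remaining $y$'s.
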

\begin{proof}
Suppose that
\[C_{f}=( k_1,\ldots, k_n),\quad C_{g}=(k'_1,\ldots, k'_m).\]
By hypothesis, there exists an order preserving injection $\psi:\mathbb{N}\to\mathbb{N}$ such that $\psi(n)\leq m$ and $k_i\leq k'_{\psi(i)}$, for every $i\in \{1,\ldots,n\}$.

Define $s_i:=\psi(i)$, for $i\in\{1,\ldots,n\}$. Then, the substitution $y_i\mapsto y_{s_i}$ followed by multiplication by $y$'s on the commutator $A(f)$ gives
\[m=[z, y_{i_1}^{(k_1)},\ldots, y_{i_n}^{(k_n)}, y_{i_1}^{(k_{i_1}'-k_1)},\ldots, y_{i_n}^{(k_{i_n}'-k_n)}, y_{j_{1}}^{(k'_{j_1})},\ldots,y_{j_{m-n}}^{(k'_{j_{m-n}})}], \]
where $\{j_1,\ldots, j_{m-n}\}=\{1\,\ldots,m\}\setminus\{i_1,\ldots, i_n\}$.
From \Cref{conjuntogeradorremaining}, we have 
\[m+  \mathrm{Id}_G(\mathrm{UT}_3^{(-)},\Gamma(g,1))= A(g)+ h' + \mathrm{Id}_G(\mathrm{UT}_3^{(-)}\Gamma(g,1)),\]
where $h'\in \mathcal{H}^{(2)}$. Repeating the same substitution and successive multiplication by variables $y$'s on the polynomial $f$, we obtain a polynomial $f'\in\langle f\rangle^{T_G}$ such that
\[f' + \mathrm{Id}_G(\mathrm{UT}_3^{(-)}\Gamma(g,1))=\gamma  A(g)+ h'' + \mathrm{Id}_G(\mathrm{UT}_3^{(-)}\Gamma(g,1)) ,\] 
where $\gamma\in\mathbb{K}\setminus\{0\}$ and $h''\in\mathcal{H}^2$. The result follows.
\end{proof}
\begin{lemma}\label{finitoH}
Let $J$ be a $T_G$-ideal such that  $\mathrm{Id}_G(\mathrm{UT}_3^{(-)},\Gamma(g,1))\subseteq J$. Consider the sets
$\widetilde{\mathcal{H}}^{(2)}=\mathcal{H}^{(2)}\cap J$ and $\widetilde{\mathcal{H}}=\mathcal{H} \cap J$.
Then there exists a finite subset $\widehat{H}\subseteq \widetilde{\mathcal{H}}$ such that
\[\widetilde{\mathcal{H}}\subseteq \langle \widehat{H}\rangle^{T_G} + \widetilde{\mathcal{H}}^{(2)}+\mathrm{Id}_G(\mathrm{UT}_3^{(-)},\Gamma(g,1)). \]
\end{lemma}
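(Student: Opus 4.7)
The plan is to follow the same well-order-plus-leading-term pattern already deployed in \Cref{finitoe1e2,finitog1g-1,finitoH2}, but now using the partial well-ordering $(\mathcal{C},\preceq')$ of \Cref{seqsimplesremaining} together with the transfer statement of \Cref{consequenciastipo1remaining}, and combining them with the structural decomposition of \Cref{polynomiosH}. The informal picture is: an element $f\in\widetilde{\mathcal{H}}$ is an ``honest'' basic polynomial, i.e.\ it has a nonzero type-$1$ head $A(f)$ and a type-$2$ tail; I only need to trap the heads using finitely many representatives, since the tails can be absorbed in $\widetilde{\mathcal{H}}^{(2)}$.

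First, I would form $\mathcal{C}(J):=\{C_f\mid f\in\widetilde{\mathcal{H}}\}\subseteq\mathcal{C}$ and apply \Cref{seqsimplesremaining} to pick finitely many minimal elements $C_{f_1},\ldots,C_{f_t}$ of $\mathcal{C}(J)$, with $f_1,\ldots,f_t\in\widetilde{\mathcal{H}}$. Then define $\widehat{H}:=\{f_1,\ldots,f_t\}$.

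Next, given an arbitrary $f\in\widetilde{\mathcal{H}}$, by minimality there exists $i\in\{1,\ldots,t\}$ with $C_{f_i}\preceq' C_f$, so \Cref{consequenciastipo1remaining} produces $h\in\mathcal{H}$ satisfying $h\in\langle f_i\rangle^{T_G}+\mathrm{Id}_G(\mathrm{UT}_3^{(-)},\Gamma(g,1))$ and $A(h)=A(f)$. By \Cref{polynomiosH} I can write $f=\alpha A(f)+g_1$ and $h=\beta A(h)+g_2$ with $\alpha,\beta\in\mathbb{K}\setminus\{0\}$ and $g_1,g_2\in\mathcal{H}^{(2)}\cup\{0\}$ (the coefficient $\beta$ is nonzero precisely because $h\in\mathcal{H}=\mathcal{H}^{(1,2)}\setminus\mathcal{H}^{(2)}$). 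Since $A(f)=A(h)$, the combination
\[
f-\tfrac{\alpha}{\beta}h=g_1-\tfrac{\alpha}{\beta}g_2
\]
lies in $\mathcal{H}^{(2)}\cup\{0\}$. Moreover it lies in $J$, because $f\in J$ and $h\in\langle f_i\rangle^{T_G}+\mathrm{Id}_G(\mathrm{UT}_3^{(-)},\Gamma(g,1))\subseteq J$ (as $f_i\in\widetilde{\mathcal{H}}\subseteq J$ and $\mathrm{Id}_G(\mathrm{UT}_3^{(-)},\Gamma(g,1))\subseteq J$). Hence $f-\tfrac{\alpha}{\beta}h\in\widetilde{\mathcal{H}}^{(2)}\cup\{0\}$ and
\[
f=\tfrac{\alpha}{\beta}h+\bigl(f-\tfrac{\alpha}{\beta}h\bigr)\in\langle\widehat{H}\rangle^{T_G}+\widetilde{\mathcal{H}}^{(2)}+\mathrm{Id}_G(\mathrm{UT}_3^{(-)},\Gamma(g,1)),
\]
which is exactly the desired containment.

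I do not anticipate any real obstacle here: all the combinatorics (finite basis property of $\mathcal{C}$, availability of a suitable substitution realizing $A(f)$ from $A(f_i)$, and the multihomogeneous decomposition into head plus type-$2$ tail) is already packaged in \Cref{seqsimplesremaining,consequenciastipo1remaining,polynomiosH}. The only point worth verifying carefully is that the witness $h$ produced by \Cref{consequenciastipo1remaining} genuinely belongs to $\mathcal{H}$ (not to $\mathcal{H}^{(2)}$), so that the ratio $\alpha/\beta$ used to kill the type-$1$ head is well defined; this is built into the conclusion of \Cref{consequenciastipo1remaining} and into the definition of $A(\cdot)$.
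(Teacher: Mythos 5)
Your proposal is correct and follows essentially the same route as the paper: choose finitely many $f_i$ whose sequences $C_{f_i}$ are the minimal elements of $\{C_f\mid f\in\widetilde{\mathcal{H}}\}$ under $\preceq'$, invoke \Cref{consequenciastipo1remaining} to produce $h\in\langle f_i\rangle^{T_G}+\mathrm{Id}_G(\mathrm{UT}_3^{(-)},\Gamma(g,1))$ with the same type-$1$ head, and absorb the difference into $\widetilde{\mathcal{H}}^{(2)}$. Your explicit use of the ratio $\alpha/\beta$ is just a more careful rendering of the paper's ``without loss of generality the coefficients agree'' step.
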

\begin{proof}
 Consider the sets
 \[\mathcal{A}_J=\{ A(f)\mid f\in \widetilde{\mathcal{H}} \},\quad \mathcal{C}_J=\{C_{f}\mid f\in\mathcal{A} \}.\]
 By \Cref{seqsimplesremaining}, there exist $f_1$,\dots, $f_t\in \widetilde{\mathcal{H}}$ such that $C_{f_1},\ldots, C_{f_t}$ are minimal elements of $\mathcal{C}$. Define $\widehat{H}=\{f_1,\ldots, f_t\}$
 and let $g\in\widetilde{\mathcal{H}}$. Then, there exists $1\leq i\leq t$ such that $C_{f_i}\preceq 'C_{A(g)}$. Thus, by  \Cref{consequenciastipo1remaining}, there exists $h\in \widetilde{\mathcal{H}}$ such that $h\in\langle f_i\rangle^{T_G}+ \mathrm{Id}_G(\mathrm{UT}_3^{(-)},\Gamma(g,1))$ and $A(h)=A(g)$. Without loss of generality, we can suppose that the coefficient of $A(g)$ is the same in the polynomials $f$ and $g$. Therefore, $h-g\in \widetilde{\mathcal{H}}^{(2)}$.
 Hence,
 \[g\in\langle f_i\rangle^{T_G}+\widetilde{\mathcal{H}}^{(2)}+\mathrm{Id}_G(\mathrm{UT}_3^{(-)},\Gamma(g,1))\subseteq  \langle \widehat{H}\rangle^{T_G} + \widetilde{\mathcal{H}}^{(2)}+\mathrm{Id}_G(\mathrm{UT}_3^{(-)},\Gamma(g,1))  \]
\end{proof}
\begin{lemma}\label{finitoH12}
Let $J$ be a $T_G$-ideal such that  $\mathrm{Id}_G(\mathrm{UT}_3^{(-)},\Gamma(g,1))\subseteq J$. Consider the set
$\widetilde{\mathcal{H}}^{(1,2)}=\mathcal{H}^{(1,2)}\cap J$. Then, there exists a finite subset $H^{(1,2)}\subseteq \widetilde{\mathcal{H}}^{(1,2)}$ such that
\[\mathcal{H}^{(1,2)}\subseteq \langle H^{(1,2)}\rangle^{T_G}+\mathrm{Id}_G(\mathrm{UT}_3^{(-)},\Gamma(g,1)). \]
\end{lemma}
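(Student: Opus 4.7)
The plan is to combine Lemmas~\ref{finitoH} and \ref{finitoH2} directly, together with the observation that the basic polynomials split according to whether they contain a commutator of type~$1$. By definition, $\mathcal{H}=\mathcal{H}^{(1,2)}\setminus\mathcal{H}^{(2)}$, so $\mathcal{H}^{(1,2)}=\mathcal{H}\sqcup\mathcal{H}^{(2)}$, and intersecting with $J$ yields $\widetilde{\mathcal{H}}^{(1,2)}=\widetilde{\mathcal{H}}\sqcup\widetilde{\mathcal{H}}^{(2)}$.

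First I would invoke \Cref{finitoH2} to obtain a finite subset $\widetilde{H}_0\subseteq\widetilde{\mathcal{H}}^{(2)}$ with
\[
\widetilde{\mathcal{H}}^{(2)}\subseteq\langle\widetilde{H}_0\rangle^{T_G}+\mathrm{Id}_G(\mathrm{UT}_3^{(-)},\Gamma(g,1)),
\]
and then \Cref{finitoH} to obtain a finite subset $\widehat{H}\subseteq\widetilde{\mathcal{H}}$ with
\[
\widetilde{\mathcal{H}}\subseteq\langle\widehat{H}\rangle^{T_G}+\widetilde{\mathcal{H}}^{(2)}+\mathrm{Id}_G(\mathrm{UT}_3^{(-)},\Gamma(g,1)).
\]
Set $H^{(1,2)}:=\widehat{H}\cup\widetilde{H}_0\subseteq\widetilde{\mathcal{H}}^{(1,2)}$, a finite set.

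To verify the desired inclusion, take $f\in\widetilde{\mathcal{H}}^{(1,2)}$. If $f\in\widetilde{\mathcal{H}}^{(2)}$, then $f\in\langle\widetilde{H}_0\rangle^{T_G}+\mathrm{Id}_G\subseteq\langle H^{(1,2)}\rangle^{T_G}+\mathrm{Id}_G$ immediately. If instead $f\in\widetilde{\mathcal{H}}$, write $f=f_1+f_2+f_3$ with $f_1\in\langle\widehat{H}\rangle^{T_G}$, $f_2\in\widetilde{\mathcal{H}}^{(2)}$, and $f_3\in\mathrm{Id}_G$. Since $\widehat{H}\subseteq J$ we have $f_1\in J$, and $f_3\in J$ trivially, so $f_2=f-f_1-f_3\in J$, confirming $f_2\in\mathcal{H}^{(2)}\cap J=\widetilde{\mathcal{H}}^{(2)}$, to which we may apply \Cref{finitoH2}. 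Combining, $f\in\langle\widehat{H}\rangle^{T_G}+\langle\widetilde{H}_0\rangle^{T_G}+\mathrm{Id}_G=\langle H^{(1,2)}\rangle^{T_G}+\mathrm{Id}_G$, as required.

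There is no real obstacle here: the hard work — the partially well-ordered set arguments organized by $(S,\preceq)$ for commutators of type~$2$ and by $(\mathcal{C},\preceq')$ for the leading type-$1$ part, together with the compatibility lemmas \Cref{oderSc1Sc2,ordenlinerremaining,consecompatibleremaining,consequenciastipo1remaining} — has already been carried out in the statements of Lemmas~\ref{finitoH2} and \ref{finitoH}. The present lemma is therefore a clean assembly step; the only point to be careful about is that the ``type-$2$ remainder'' produced by \Cref{finitoH} automatically lies in $J$ (hence in $\widetilde{\mathcal{H}}^{(2)}$), which is what licenses the second application of \Cref{finitoH2}.
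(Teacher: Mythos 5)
Your proposal is correct and follows exactly the route the paper intends: the paper's own proof is the one-line remark that the result follows from Lemmas~\ref{finitoH2} and \ref{finitoH}, and you have simply written out the assembly (including the small but worthwhile check that the type-$2$ remainder from Lemma~\ref{finitoH} lies in $J$, which is already guaranteed by that lemma's conclusion $\widetilde{\mathcal{H}}^{(2)}=\mathcal{H}^{(2)}\cap J$). Note only that the displayed inclusion in the statement should read $\widetilde{\mathcal{H}}^{(1,2)}$ rather than $\mathcal{H}^{(1,2)}$, as your argument (and the paper's use of the lemma in Theorem~\ref{remainingSpecht}) correctly assumes.
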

\begin{proof}
The result follows from \Cref{finitoH2,finitoH}.  
\end{proof}
\begin{theorem}\label{remainingSpecht}
Let $\mathbb{K}$ be an infinite field, $G$ an abelian group, $g\in G\setminus\{1\}$ and $J$ be a $T_G$-ideal such that $\mathrm{Id}_G(\mathrm{UT}_3^{(-)}(\mathbb{K}),\Gamma(g,1))\subseteq J$. Then, there exists a finite subset $S\subseteq\mathcal{L}(X^G)$ such that $J=\langle S\rangle^{T_G}+\mathrm{Id}_G(\mathrm{UT}_3^{(-)},\Gamma(g,1))$.
\end{theorem}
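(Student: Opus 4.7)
The plan is to mimic the proofs of \Cref{specthUniversal,specthalmostUniversal,specthalmostcanonical}, reducing the problem to the finite generation statements that have already been established in \Cref{finitoH12} (for basic polynomials) and in \cite[Theorem 5.16]{bahturin} (for polynomials in $\mathcal{L}(Y)$, which are ordinary polynomial identities of $\mathrm{UT}_2^{(-)}$).

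Since $\mathbb{K}$ is infinite, $J$ is generated as $T_G$-ideal by its multihomogeneous polynomials. By \Cref{baseremaininggrading}, any multihomogeneous polynomial of $\mathcal{L}(X^G)$ can be written, modulo $\mathrm{Id}_G(\mathrm{UT}_3^{(-)},\Gamma(g,1))$, as the sum of a polynomial in $\mathcal{L}(Y)$ (that is, of trivial type) and a basic polynomial (a linear combination of commutators of type $1$ and $2$). Therefore, introducing
\[
\mathcal{Y}=\mathcal{L}(Y)\cap J,\qquad \widetilde{\mathcal{H}}^{(1,2)}=\mathcal{H}^{(1,2)}\cap J,
\]
we obtain
\[
J=\langle \mathcal{Y}\cup \widetilde{\mathcal{H}}^{(1,2)}\rangle^{T_G}+\mathrm{Id}_G(\mathrm{UT}_3^{(-)},\Gamma(g,1)).
\]

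Next, I invoke \cite[Theorem 5.16]{bahturin}, which asserts the Specht property for the variety generated by $\mathrm{UT}_2^{(-)}(\mathbb{K})$ over an infinite field, to extract a finite subset $\widetilde{Y}\subseteq \mathcal{Y}$ with
\[
\mathcal{Y}\subseteq \langle \widetilde{Y}\rangle^{T_G}+\mathrm{Id}_G(\mathrm{UT}_3^{(-)},\Gamma(g,1)).
\]
Similarly, \Cref{finitoH12} supplies a finite subset $H^{(1,2)}\subseteq \widetilde{\mathcal{H}}^{(1,2)}$ such that
\[
\widetilde{\mathcal{H}}^{(1,2)}\subseteq \langle H^{(1,2)}\rangle^{T_G}+\mathrm{Id}_G(\mathrm{UT}_3^{(-)},\Gamma(g,1)).
\]
Setting $S=\widetilde{Y}\cup H^{(1,2)}$ yields the desired equality
\[
J=\langle S\rangle^{T_G}+\mathrm{Id}_G(\mathrm{UT}_3^{(-)},\Gamma(g,1)).
\]

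There is no real obstacle: the entire technical content has been absorbed into \Cref{finitoH12} (which combines \Cref{finitoH2,finitoH} and thus relies on the partially well-ordered structure of $\mathcal{S}$ and $\mathcal{C}$, the leading-monomial analysis, and the compatibility lemma \Cref{ordenlinerremaining}) and into the known Specht property of $\mathrm{UT}_2^{(-)}$. The only care needed is the preliminary decomposition into $\mathcal{Y}$ and $\widetilde{\mathcal{H}}^{(1,2)}$, which is immediate from the basis of the relatively free algebra provided by \Cref{baseremaininggrading}.
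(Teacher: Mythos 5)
Your proposal is correct and follows essentially the same route as the paper: decompose $J$ (via its multihomogeneous polynomials and \Cref{baseremaininggrading}) into the trivial-type part $\mathcal{Y}$ and the basic part $\widetilde{\mathcal{H}}^{(1,2)}$, then apply \cite[Theorem 5.16]{bahturin} to the former and \Cref{finitoH12} to the latter. No substantive differences from the paper's argument.
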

\begin{proof}
Since the base field $\mathbb{K}$ is infinite, we have that 
\[J= \langle \mathcal{Y}\cup \widetilde{\mathcal{H}}^{(1,2)}\rangle^{T_{G}}+ \mathrm{Id}_G(\mathrm{UT}_3^{(-)},\Gamma(g,g^{-1})), \] 
where
\[\mathcal{Y}=\{\textrm{polynomials of trivial type}\}\cap J,\]
\[\widetilde{\mathcal{H}}^{(1,2)}=\mathcal{H}^{(1,2)}\cap J.\]
By \cite[Theorem 5.16]{bahturin} and \Cref{finitoH12}, there exist finite subsets $Y_0\subseteq\mathcal{Y} $ and $H^{(1,2)}\subseteq \widetilde{\mathcal{H}}^{(1,2)} $ such that
\[\mathcal{Y}\subseteq \langle Y_0\rangle^{T_G}+ \mathrm{Id}_G(\mathrm{UT}_3^{(-)},\Gamma(g,1)),\]
\[\widetilde{\mathcal{H}}^{(1,2)}\subseteq \langle H^{(1,2)}\rangle^{T_G}+ \mathrm{Id}_G(\mathrm{UT}_3^{(-)},\Gamma(g,1)).\]
Thus $J= \langle Y_0\rangle^{T_G}+ \langle H^{(1,2)}\rangle^{T_G}+ \mathrm{Id}_G(\mathrm{UT}_3^{(-)},\Gamma(g,1))$. The result is proved.
\end{proof}


\begin{thebibliography}{99}
\bibitem{bahturin} Y.~A.~Bahturin, \emph{Identical Relations in Lie Algebras}, Berlin, Boston: De Gruyter, 2021.

\bibitem{CMa} L.~Centrone, F.~Martino, \emph{A note on cocharacter sequence of Jordan upper triangular matrix algebra}, Communications in Algebra \textbf{45} (2017), 1687–-1695.

\bibitem{CY} L.~Centrone, F.~Yasumura, \emph{Actions of Taft's algebras on finite dimensional algebras}, Journal of Algebra \textbf{560} (2020), 725--744.

\bibitem{CD} D.~Correa, V.~Drensky, P.~Koshlukov, \emph{The Specht problem for varieties of ${\mathbb Z}_n$-graded Lie algebras in positive characteristic}, in preparation.

\bibitem{CGR} D.~Correa, D.~Gon\c calves,  E. ~Riva, \emph{Graded polynomial identities for the Lie algebra of $3 \times 3$ upper triangular matrices endowed with the canonical grading}, preprint.

\bibitem{correa}
D.~Correa, P.~Koshlukov, \emph{Specht property of varieties of graded Lie algebras}, Monatshefte f\"ur Mathematik \textbf{02} (2023), 65--92.
 
\bibitem{VKS} O.~Di Vincenzo, P.~Koshlukov, R.~La Scala, \emph{Involutions for upper triangular matrix algebras}, Advances in Applied Mathematics \textbf{37} (2006), 541--568.

\bibitem{VinKoVa2004} O.~Di Vincenzo, P.~Koshlukov, A.~Valenti, \textit{Gradings on the algebra of upper triangular matrices and their graded identities}, Journal of Algebra \textbf{275} (2004), 550--566.

\bibitem{VinNar} O.~Di Vincenzo, V.~Nardozza, \emph{Differential Polynomial Identities of Upper Triangular Matrices Under the Action of the Two-Dimensional Metabelian Lie Algebra}, Algebras and Representation Theory \textbf{25} (2022), 187--209.

\bibitem{DiogoGaldino} D.~Diniz, A.~Ramos, J.~Galdino, \emph{Graded identities with involution for the algebra of upper triangular matrices}, Linear Algebra and its Applications \textbf{688} (2024), 120--156.

\bibitem{Dr81} V.~Drensky, \emph{A minimal basis of identities for a second-order matrix algebra over a field of characteristic 0}, Algebra and Logic \textbf{20} (1981), 188--194.

\bibitem{Drenskybook} V.~Drensky, \emph{Free Algebras and PI-Algebras. Graduate course in algebra.} Springer-Verlag.

\bibitem{GY} M.~Said Garcia, F.~Yasumura, \emph{Graded polynomial identities of the infinite-dimensional upper triangular matrices over an arbitrary field}, arXiv:2402.10839. 

\bibitem{Gen81} G.~K.~Genov, \emph{A basis of identities of the algebra of third-order matrices over a finite field}, Algebra and Logic \textbf{20} (1981), 241--257.

\bibitem{GenSid82} G.~K.~Genov, P.~N.~Siderov, \emph{A basis of the identities of the fourth order matrix algebra over a finite field. I, II.} Serdica \textbf{8} (1982), 351--366.

\bibitem{GiamRizzo} A.~Giambruno, C.~Rizzo, \emph{Differential identities, $2\times2$ upper triangular matrices and varieties of almost polynomial growth}, Journal of Pure and Applied Algebra \textbf{223} (2019), 1710--1727.

\bibitem{GRiva} D.~Gon\c calves, E.~Riva, \emph{Graded polynomial identities for the upper triangular matrix algebra over a finite field}, Journal of Algebra \textbf{559} (2020), 625--645.

\bibitem{GRivab} D.~Gon\c calves, E.~Riva, \emph{Graded polynomial identities for the Lie algebra of $2\times2$ upper triangular matrices}, in preparation.

\bibitem{DimasSa} D.~Gon\c calves, M.~Salom\~ao, \emph{Graded polynomial identities for the Jordan algebra of $2\times2$ upper triangular matrices}, arXiv:2011.11116

\bibitem{GSK} D.~Gon\c calves, M.~Salom\~ao, P.~Koshlukov, \emph{Polynomial identities for the Jordan algebra of $2\times2$ upper triangular matrices}, Journal of Algebra \textbf{593} (2022), 477--506.

\bibitem{Higman} G.~Higman, \emph{Ordering by divisibility in abstract algebras}, Proceedings of the London Mathematical Society \textbf{3} (1952), 326--336.

\bibitem{IMa} A.~Ioppolo, F.~Martino, \emph{Superinvolutions on upper-triangular matrix algebras}, Journal of Pure and Applied Algebra \textbf{222} (2018), 2022-2039.

\bibitem{K02} P.~Koshlukov, \emph{Basis of the identities of the matrix algebra of order two over a field of characteristic $p\ne2$}, Journal of Algebra \textbf{241} (2002), 410--434.

\bibitem{KMa} P.~Koshlukov, F.~Martino, \emph{Polynomial identities for the Jordan algebra of upper triangular matrices of order 2}, Journal of Pure and Applied Algebra \textbf{216(11)} (2012), 2524--2532.

\bibitem{KY2025} P.~Koshlukov, F.~Yasumura, \emph{Gradings on the algebra of triangular matrices as a Lie algebra: revisited}, Journal of Algebra \textbf{664} (2025), 756--779.

\bibitem{MalKuz78} Yu.~N.~Maltsev, E.~N.~Kuzmin, \emph{A basis for identities of the algebra of second order matrices over a finite field}, Algebra and Logic \textbf{17} (1978), 17--21.

\bibitem{MelloY} T.~C.~de Mello, F.~Yasumura, \emph{On star-homogeneous-graded polynomial identities of upper triangular matrices over an arbitrary field}, Journal of Algebra \textbf{663} (2025), 652--671.

\bibitem{PdrManu} P.~Morais, M.~Souza, \emph{The algebra of $2 \times 2$ upper triangular matrices as a commutative algebra: Gradings, graded polynomial identities and Specht property}, Journal of Algebra \textbf{593} (2022).

\bibitem{Nar} V.~Nardozza, \emph{Differential polynomial identities of upper triangular matrices of size three}, Linear Algebra and its Applications \textbf{663} (2023), 142--177.

\bibitem{Raz73} Yu.~Razmyslov, \emph{Finite basing of the identities of a matrix algebra of second order over a field of characteristic zero}, Algebra and Logic \textbf{12} (1973), 47--63.


\bibitem{Siderov} P.~Siderov, \emph{A basis for identities of an algebra of triangular matrices over an arbitrary field}, Pliska Studia Mathematica Bulgarica \textbf{2} (1981), 143--152.

\bibitem{UruG} R.~Urure, D.~Gon\c calves, \emph{Identities with involution for $2\times2$ upper triangular matrices algebra over a finite field}, Linear Algebra and its Applications \textbf{544} (2018), 223--253.

\bibitem{Y23} F.~Yasumura, \emph{Graded polynomial identities for the Lie algebra of upper triangular matrices of order 3}. Communications in Algebra \textbf{51} (2023), 2293--2307.
\end{thebibliography}
\end{document}